\def\ps@firstpage{\ps@plain
  \def\@oddfoot{\normalfont\scriptsize \hfil\thepage\hfil
     \global\topskip\normaltopskip}
  \def\@oddhead{
  \set@logo{%
 \newbox\logobox\setbox\the\logobox=\hbox{Published in:
 \textit{Matematychni Studii
 (\href{http://mathscinet.imath.kiev.ua/mathscinet/search/journaldoc.html?cn=Mat_Stud}{Matematichn\={\i} Stud\={\i}\"{\i}}),}}
 \newdimen\logowidth\logowidth=\wd\the\logobox
 \newbox\refbox\setbox\the\refbox=\hbox{\MR{\ 1686048 (2000b:58031)};
   Zbl~\href{http://www.zentralblatt-math.org/zmath/en/advanced/?q=an:0958.58016&format=complete}{0958.58016}.}
 \parbox[t]{\logowidth}{\box\the\logobox
   \rightline{vol.~11 (1999), No.~1, p.~85--107.}
   \rightline{\box\the\refbox}
   }}\hss}}
\newtheorem{prop}{Proposition}[section]
\newtheorem{lemma}{Lemma}[section]
\theoremstyle{remark}\newtheorem{rem}{Remark}[section]
\theoremstyle{definition}\newtheorem{define}{Definition}[section]
\chardef\atcode=\catcode`\@\catcode`\@=11\def\section{\@startsection{section}{1}%
  \z@{.7\linespacing\@plus\linespacing}{.5\linespacing}%
  {\normalfont\scshape}}\catcode`\@=\atcode
\newcommand{\msp}[1]{{\mspace{#1mu}}}
\newcommand{\ms}{\kern.2em}
\newcommand{\mms}{\kern.1em}
\newcommand{\msi}{\kern.2em}
\let\botsmash\smash
\let\Cal\mathcal
\let\frak\mathfrak
\let\bold\mathbf
\let\Bbb\mathbb
\let\ssize\scriptscriptstyle
\let\tsize\textstyle
\let\smc\scshape
\let\flushpar\noindent
\let\sssize\scriptscriptstyle
\let\dsize\displaystyle
\let\boldkey\boldsymbol
\newcommand{\eufb}[1]{\boldsymbol{{\mathfrak{#1}}}}
\newcommand{\overg}{\bold{\tilde{\eufb g}}}
\newcommand{\subtld}[1]{\vphantom{_{\scriptscriptstyle\#}}\botsmash{#1{\!}_{_{_{\textstyle\char"7E}}}}}
\DeclareFontFamily{U}{euf}{}
  \DeclareFontShape{U}{euf}{m}{n}{%
    <-6> [.95]   eufm5%
    <6-9>[.95]   eufm7%
    <9-> [.95]   eufm10%
    }{}
  \DeclareFontShape{U}{euf}{b}{n}{%
    <-6> [.95]   eufb5%
    <6-9>[.95]   eufb7%
    <9-> [.95]   eufb10%
    }{}
\DeclareFontFamily{U}{eur}{}
  \DeclareFontShape{U}{eur}{b}{n}{%
    <-6> [.95]   eurb5%
    <6-9>[.95]   eurb7%
    <9-> [.95]   eurb10%
    }{}
\DeclareMathAlphabet\eurb{U}{eur}{b}{n}
\DeclareFontFamily{T1}{scriptpv}{\skewchar\font127 }
\DeclareFontShape{T1}{scriptpv}{m}{n}{%
   <->     [14.4]    scriptpv10%
      }{}
\DeclareSymbolFont{scriptpv}{T1}{scriptpv}{m}{n}
\DeclareSymbolFontAlphabet{\fit}{scriptpv}
\author{Roman~Ya.\msi Matsyuk}
\title[INTEGRATION BY PARTS IN HIGHER ORDER
VARIATIONAL CALCULUS]{INTEGRATION BY PARTS AND VECTOR DIFFERENTIAL FORMS IN HIGHER ORDER
VARIATIONAL CALCULUS ON FIBRED MANIFOLDS}
\address{Institute for Applied Problems in Mechanics and Mathematics\\15 Dudayev Str., L'viv, Ukraine}
\email{romko.b.m@gmail.com; matsyuk@lms.lviv.ua}
\urladdr{http://www.iapmm.lviv.ua}
\keywords{Euler-Lagrange equations, Classical field theory,
Higher order variational calculus, Nonlinear Green formula,
Integration by parts, First variation formula, Higher order
La\-gran\-gian}
\subjclass[2010]{58E30, 70Sxx, 58A15}
\begin{document}
\vspace*{17truemm}
\begin{abstract}
Infinitesimal variation of Action functional in classical (non-quantum) field
theory with higher derivatives is presented in terms of well-defined
intrinsic geometric objects independent of the particular field which
varies. ``Integration by parts'' procedure for this variation is then
described in purely formal language and is shown to consist in application
of nonlinear Green formula to the vertical differential of the Lagrangian.
Euler-Lagrange expressions and the Green operator are calculated by simple
pull-backs of certain vector bundle valued differential forms associated with
the given variational problem.
\end{abstract}
\maketitle
\section*{Introduction}
Generally posed variational problem demands covariance
with respect to the pseudo\-group of local transformations which
mix the dependent and independent variables. Appropriate
calculus have been developed by many authors, to mention
{\smc dedecker, tulczyjew, vinogradov, zharinov} as some. In physical
field theory, however, only those transformations preserving
the given fibred structure count. This suggests that geometric
objects adapted to this structure may turn out to be useful.
It is our opinion that the first to mention are semi-basic
differential forms with values in vertical fiber bundles.
A similar approach gained support in~\cite{Garcia_Symposia1974} and further in~\cite{Ferraris1985} and~\cite{Munoz-Masque1985}.

The Fr\'echet derivative of the Action functional at $\upsilon $,
where $\upsilon $ belongs to the set of cross-sections of some fibred
manifold $Y$, is an $r^{\text{th}}$-order differential operator in
the space of variations. Integration-by-parts formula (in other terms,~---
the first variation formula) for this operator involves two other
objects, namely the transpose operator and the Green operator.  Their
definition depends on $\upsilon $ implicitly. Two questions arise
therefore: 1)~By means of what configuration does $\upsilon $ enter
into those operators and how to make this dependence upon $\upsilon $
explicit? 2)~Do there exist such intrinsic geometric objects that the
above mentioned operators for each individual $\upsilon $ might be
calculated by means of a simple geometric procedure? We rewrite the
decomposition formula of {\smc kol\'{a}\v{r}}~\cite{KolarNoveMesto} in terms of
vector bundle valued differential forms to answer these questions.
\newpage
Our goal is threefold. First, to present a rigorous (from the point of view
of Global Analysis) computation of the Fr\'echet derivative of the Action
functional; second, to compute in conceptually the same spirit the transpose
and Green operators; third, to make evident that the notion of a vector bundle
valued differential form is best suited to the peculiarities of variational
calculus in the framework of classical field theory.

The paper is organized in the following way.
In Section\ms\ref{Sec1}, which has a preliminary character, we fix notations
related to the
definitions of some actions of base-substituting morphisms upon vector bundle
valued differential forms. These definitions apply naturally to the calculus
of variations because the prolongations of different orders and various pull-backs
to base manifolds happen in that calculus.
Intending to operate with the pull-backs of differential forms which take values in
some vector bundles, one needs to make intensive use of the
philosophy of induced bundles (reciprocal images). We
explain some basic properties of the reciprocal image
functor and the interplay between it and the notion of vector
bundle valued differential forms. Our development is based on~\cite{Theses}.
The accompanying notations allow us to give the adequate appearance to the integration-by-parts formula later
and also they facilitate the formalization of most proves.

In field theory the notion of the Lagrangian quite naturally falls into
the ramification of the concept of semi-basic differential form with respect
to the independent variables. We turn to the discussion of the Lie derivative
of such a form in Section\ms\ref{Sec2}. This supplies us with the adequate tool for the
description of the infinitesimal variation of the Action functional.

In Section\ms\ref{Sec3} the Action
functional is introduced and the meaning of its variation is established.
Further in this sections we give a purely geometric and strictly intrinsic computation
of the Action variation in terms of the fibre differential and the Lie
derivative operators. In Section\ms\ref{Sec4}  the first variational formula
from the point of view of the concept of vector bundle valued differential
forms is discussed. We also deduce this formula from the Fr\'echet derivative
expression by means of a suitable reformulation of Kol\'a\v r decomposition
formula.
\section{Preliminaries on vector bundle valued semi-basic differential forms\label{Sec1}}
\fancyhead[CE,CO]{\slshape SECTION~\thesection: VECTOR FORMS}
\noindent First we develop some general features
of the behavior of vector bundle valued differential forms under inverse
image functor.
\subsection{Action of base-substituting morphisms upon vector bundles and their
cross-sections}
Consider a vector bundle $\chi :W\rightarrow X$  and a
morphism of manifolds ${{\fit g}}:B\rightarrow X$. The reciprocal image
${{\fit g}}^{-1}\chi $ of the fibre bundle $\chi $ is the set $B\times
_{X}W$  of pairs $(b,{\eurb w})$ with ${{\fit g}}(b)=\chi ({\eurb w})$.  We
denote $\chi ^{-1}({{\fit g}})$  the projection from $B\times _{X}W$  onto
$W$. Given another vector bundle $\chi ^\prime :W^\prime
\rightarrow X^\prime $  and a vector bundle homomorphism ${{\mathsf k}}
:W\rightarrow W^\prime $  over the morphism of bases
${{\fit k}}:X\rightarrow X^\prime $  the homomorphism ${{\mathsf k}}$ may be
reduced to the homomorphism ${{\mathsf k}}_{\sssize X}:W\rightarrow
{{\fit k}}^{\,-1}W^\prime $  over the base $X$ and we denote by
$$
{{\fit g}}^{-1}{{\mathsf k}}\doteqdot {{\fit g}}^{-1}{{\mathsf k}}_{\sssize X}
$$
\noindent the reciprocal image of ${{\mathsf k}}_{\sssize X}$ with respect to ${{\fit g}}$;
it maps ${{\fit g}}^{-1}W$ into ${{{\fit g}}^\prime }^{-1}W'$, where ${{\fit g}}^\prime
={{\fit k}}\circ {{\fit g}}\,$,  and it commutes with ${{\mathsf k}}_{\sssize X}$
via the pair of projections $\chi
^{-1}{{\fit g}}$ and ${\chi ^\prime }^{-1}{{\fit g}}^\prime $ (see Fig.\mms\ref{Figure12} at page\ms\pageref{Figure12}).

Denote by $\pmb{\tilde{\ }}$ the one-to-one correspondence between
the cross-sections
${\eufb s}\in \Gamma \{{{\fit g}}^{-1}W\}$
and vector
fields ${\frak g}:B\rightarrow W$  along ${{\fit g}}$, so that
$\bold{\tilde{\eufb g}}\in \Gamma \{{{\fit g}}^{-1}W\}$, and, reciprocally,
$\vphantom{_x}\botsmash{\underset{\dsize\char"7E}{\frak s}}:B\rightarrow W$
becomes a morphism along ${{\fit g}}$.
Let, in addition, ${{\eufb w}}\in \Gamma \{W\}$ and let
${{\fit g}}^{-1}{{\eufb w}}$ be the reciprocal image of the cross-section ${\eufb w}$
with respect to ${{\fit g}}$. The following relations hold due to the definitions,
$$
\vphantom{_x}\botsmash{\underset{\dsize\char"7E}{\frak s}}
=(\chi ^{-1}{{\fit g}})\circ {{\eufb s}};\quad
(\chi ^{-1}{{\fit g}})\circ \bold{\tilde{\eufb g}}={\frak g};\quad
{{\fit g}}^{-1}{{\eufb w}}=\pmb{\boldkey(}{{\eufb w}}\circ {{\fit g}}\pmb{\boldkey)\,\bold{\tilde{}}}
\;.$$

Homomorphism ${{\mathsf k}}$ acts upon $\Gamma \{{{\fit g}}^{-1}W\}$  by
$$
{{\mathsf k}}_{\sssize\#}:\Gamma \{{{\fit g}}^{-1}W\}\rightarrow
\Gamma \{{{{\fit g}}^\prime }^{-1}W^\prime \},\qquad
 {{\mathsf k}}_{\sssize\#}{{\eufb s}}=({{\fit g}}^{-1}{{\mathsf k}})\circ {{\eufb s}}
$$
(this of course applies to \; ${{\fit g}}={{\fit {id}}}$  \; too).
This action commutes with the suspension operation~$\pmb{\tilde{\ }}$
in the sense that
${{\mathsf k}}_{\sssize\#}\bold{\tilde{\eufb g}}=\pmb{\boldkey(}{{\mathsf k}}\circ {\frak g}\pmb{\boldkey)\,\bold{\tilde{}}}$
 and also  ${{\mathsf k}}\circ\vphantom{_x}\botsmash{\underset{\dsize\char"7E}{\frak s}}
=({{\mathsf k}}_{\sssize\#}{{\eufb s}})_{_{\dsize\char"7E}}$. One could also write
$$
{{\mathsf k}}_{\sssize\#}=({{\fit g}}^{-1}{{\mathsf k}}_{\sssize X})_{\sssize\#}
$$
since ${{\fit g}}^{-1}{{\mathsf k}}_{\sssize X}$  maps over the identity in
$B$.\kern0pt\footnotemark[2]\footnotetext[2]{\kern3pt
This definition and the notation used generalize those of {\smc sternberg}~\cite{Sternberg1964}}
Given one more vector bundle $E\rightarrow B$  and a vector bundle
homomorphism ${{\mathsf g}}:E\rightarrow W$  over ${{\fit g}}$, homomorphism
${{\mathsf g}}^\prime ={{\mathsf k}}\circ {{\mathsf g}}$  acts from $\Gamma
\{E\}$  to $\Gamma \{{{{\fit g}}^\prime }^{-1}W^\prime \}$.

To prove the legitimacy of the usual covariance property,
$$
({{\mathsf k}}\circ {{\mathsf g}})_{\sssize\#}={{\mathsf k}}_{\sssize\#}{{\mathsf g}}_{\sssize\#}
\;,$$
one applies to both sides of it the projection ${\chi
^\prime }^{-1}{{\fit g}}^\prime$ (which is one-to-one on the fibers).
Homomorphism ${{\mathsf k}}\circ {{\mathsf g}}$  acts through the composition
with $({{\mathsf k}}\circ {{\mathsf g}})_{\sssize B}$  and we have ${\chi ^\prime
}^{-1}{{\fit g}}^\prime \circ ({{\mathsf k}}\circ {{\mathsf g}})_{\sssize B}={{\mathsf k}}\circ
{{\mathsf g}}$ (see again Fig.\mms\ref{Figure12}).  On the other hand, by ${{{\fit g}}^\prime
}^{-1}W'\approx {{\fit g}}^{-1}{{\fit k}}^{\,-1}W^\prime $ we have ${\chi ^\prime
}^{-1}{{\fit g}}^\prime \circ {{\fit g}}^{-1}{{\mathsf k}}_{\sssize X}\circ {{\mathsf g}}
_{\sssize B} \approx {\chi ^\prime }^{-1}{{\fit k}}\circ ({{\fit k}}^{\,-1}\chi ^\prime
)^{-1}{{\fit g}}\circ {{\fit g}}^{-1}{{\mathsf k}}_{\sssize X}\circ {{\mathsf g}}
_{\sssize B} = {\chi ^\prime }^{-1}{{\fit k}}\circ {{\mathsf k}}_{\sssize X}\circ
\chi ^{-1}{{\fit g}}\circ {{\mathsf g}}_{\sssize B} = {{\mathsf k}}\circ {{\mathsf g}}$,  q.e.d.
\subsection{Action of base-substituting morphisms upon vector bundle valued differential
forms}
A differential form on $B$ with values in a vector bundle $E$ is a
cross-section of the vector bundle $E \otimes \wedge T^\ast B$. Of
course, one can take ${{\fit g}}^{-1}W$ in place of $E$ and speak of differential
forms which take values in $W$. We shall use the notation $\Omega
^{d}(B;E)$  for $\Gamma \{E\otimes \wedge\!^{d}T^\ast
B\}$  and also write sometimes $\Omega (B;W)$  instead of $\Omega
(B;{{\fit g}}^{-1}W)$. If we take $\wedge E^\ast $  in place of $E$, the
module $\Omega (B;\wedge E^\ast )$ acquires the structure of a
bigraded algebra.  If (locally) $\boldsymbol\omega _{l}=\boldsymbol\varphi
_{l}\otimes \boldsymbol\alpha _{l}$, $l=1,2$, then $\boldsymbol\omega _{1}\wedge
\boldsymbol\omega _{2}=\boldsymbol\varphi _{1}\wedge \boldsymbol\varphi _{2}\otimes
\boldsymbol\alpha _{1}\wedge  \boldsymbol\alpha _{2}$. The interior product
${\bold i}:\Gamma \{E\}\times \Gamma \{\wedge E^\ast \}\rightarrow
\Gamma \{\wedge E^\ast \}$  defines a coupling $\wedge _{{\bold i}}$
from $\Omega (B;E)\times \Omega (B;\wedge\!^{d}E^\ast )$  into
$\Omega (B;\wedge ^{d-1}E^\ast )$;  if $\boldsymbol\rho ={{\eufb e}}
\otimes  \boldsymbol\alpha $, ${{\eufb e}}\in \Gamma \{E\}$ and
$\boldsymbol\alpha\in \Gamma \{\wedge T^\ast B\}$, then $\boldsymbol\rho
\wedge _{{\bold i}}\boldsymbol\omega _{1}={\bold i}({{\eufb e}}
)\boldsymbol\varphi_{1} \otimes \boldsymbol\alpha\wedge \boldsymbol\alpha _{1}$.  We
denote $\left<\boldsymbol\rho ,\boldsymbol\omega \right>\doteqdot \boldsymbol\rho
\wedge _{{\bold i}}\boldsymbol\omega $ when $\boldsymbol\omega \in \Omega
(B;E^\ast )$.

Dual ${{\fit k}}$-comorphism $({{\mathsf k}}_{\sssize X})^\ast $ acts upon $\Omega (B;
W^{\prime\ast})$  by composition with vector bundle homomorphism
${{\fit g}}^{-1}{{{\mathsf k}}_{\sssize X}}^\ast \otimes {{\mathsf{id}}}$.  We write
${{\mathsf k}}^{\sssize\#}\boldsymbol\theta^{\boldsymbol\prime}$  for the cross-section
$({{\fit g}}^{-1}{{{\mathsf k}}_{\sssize X}}^\ast \otimes {{\mathsf{id}}})\circ
\boldsymbol\theta^{\boldsymbol\prime}$  and it is of course true that
\begin{equation}
\left<{{\mathsf k}}_{\sssize\#}{{\eufb s}} ,\boldsymbol\theta^{\boldsymbol\prime}
 \right>=\left<{{\eufb s}} ,{{\mathsf k}}^{\sssize\#}\boldsymbol\theta^{\boldsymbol\prime}
 \right>
\label{1}
\end{equation}
for ${{\eufb s}} \in \Omega^{0}(B;W)$  and $\boldsymbol\theta^{\boldsymbol\prime} \in \Omega (B;W^{\prime\ast})$.

Inspired by the considerations, heretofore delivered, we introduce some definitions.
\begin{define}\label{Definition1}
Let ${{\fit k}}$  be a morphism of manifolds from $X$
into $X^\prime $, and let $W$  and $W'$  be fibred
manifolds over $X$ and $X^\prime $ respectively.

If there exists a natural lift of ${{\fit k}}$ to a fibred
morphism
${\mathsf F}_{{\fit k}}:W\rightarrow W^\prime $, then ${{\fit k}}$
acts upon $\Gamma \{W\}$ as follows
$$
{{\fit k}}_{\sssize\#}:\Gamma \{W\}\rightarrow \Gamma \{{{\fit k}}^{\,-1}W^\prime \},
\qquad {{\fit k}}_{\sssize\#}{\eufb w}=\pmb{\boldkey(}{\mathsf F}_{{\fit k}}\circ {\eufb w}\pmb{\boldkey)\,\bold{\tilde{}}}
\;.$$

Let $W$ and $W^\prime $ be vector bundles and assume a couple of morphisms
${{\fit g}}:B\rightarrow X$ and ${{\fit g}}^\prime :B\rightarrow X^\prime $
be given such that
${{\fit k}} \circ {{\fit g}}={{\fit g}}^\prime $. Then
the following modules an operators between them are defined:

\begin{gather*}
\boxed{
{{\fit k}}_{\sssize\#}
:\Omega (B;W)\rightarrow \Omega (B;W^\prime ),
\qquad {{\fit k}}_{\sssize\#}\boldsymbol\eta
=({{\fit g}}^{-1}{\mathsf F}_{{\fit k}}\otimes{{\mathsf{id}}})_{\sssize\#}\boldsymbol\eta
}
\\
\boxed{
{{\fit k}}^{\sssize\#}
:\Omega(B;W^{\prime\ast})\rightarrow \Omega (B;W^\ast ),
\qquad{{\fit k}}^{\sssize\#}\boldsymbol\theta^{\boldsymbol\prime}
=({{\fit g}}^{-1}{({\mathsf F}_{{\fit k}})_{_X}}^\ast\otimes{{\mathsf{id}}})_{\sssize\#}\boldsymbol\theta^{\boldsymbol\prime}
}
\end{gather*}

If there does not exist but merely a ${{\fit k}}$-comorphism
${\mathsf F}^\ast _{{\fit k}}$ from ${{\fit k}}^{\,-1}W^\prime $
into $W$
over the identity in $X$ then the action ${{\fit k}}^{\sssize\#}$
still can be defined as
\begin{equation}
\boxed{
{{\fit k}}^{\sssize\#}:\Omega (B;W^\prime )\rightarrow \Omega (B;W),
\qquad {{\fit k}}^{\sssize\#}\boldsymbol\eta^{\boldsymbol\prime}
=({{\fit g}}^{-1}{\mathsf F}^\ast _{{\fit k}} \otimes {{\mathsf{id}}})_{\sssize\#}\boldsymbol\eta^{\boldsymbol\prime}
}\label{2}
\end{equation}
Of course, one may set
${{\fit g}}={{\fit {id}}}$  everywhere in the above.
\end{define}

Assume be given a morphism $\delta $ from a manifold
$Z$  into B. We recall that the definition of the reciprocal
image $\delta ^{-1}\boldsymbol\rho $  applies to a cross-section
$\boldsymbol\rho \in \Gamma \{E\otimes \wedge T^\ast B\}$
by means of
$$
\delta ^{-1}:\Omega (B;E)\rightarrow \Gamma \{\delta
^{-1}E\otimes \delta ^{-1}\wedge T^\ast B\}, \qquad
\delta^{-1}\boldsymbol\rho =\pmb{\boldkey(}\boldsymbol\rho \circ \delta \pmb{\boldkey)\,\bold{\tilde{}}}
\;.$$

Let us accept the following brief notations for the mappings, induced over $Z$
by the tangent functor $T$,
$$
\delta ^{\sssize T}\doteqdot (T\delta )_{\sssize Z}\,,\qquad
\delta ^\ast \doteqdot (\delta ^{\sssize T})^\ast
\;.$$

\begin{define}\label{Definition2}
Let a vector bundle $F^\prime $ over $Z$ be given.
The action $\delta ^{(\cdot,{\sssize\#})}$ of the morphism $\delta $ upon the module
$\Gamma \{F^\prime \otimes \delta ^{-1}\wedge T^\ast B\}$
is defined by
$$
\delta ^{(\cdot,{\sssize\#})}:\Gamma \{F^\prime \otimes \delta ^{-1}\wedge T^\ast
B\}\rightarrow \Omega (Z;F^\prime ), \qquad
\delta ^{(\cdot,{\sssize\#})}=({{\mathsf{id}}}\otimes \wedge \delta ^\ast )_{\sssize\#}
\;.$$
\end{define}

The pull-back of a differential form $\boldsymbol\rho \in \Omega (B;E)$
is hereupon defined by (see also~\cite{Bourbaki1971})
$$
\delta ^{\star}:\Omega (B;E)\rightarrow \Omega (Z;E), \qquad
\delta ^{\star}\boldsymbol\rho =\delta ^{(\cdot,{\sssize\#})}\delta ^{-1}\boldsymbol\rho
\;.$$
\begin{define}\label{Definition3}
If there exists a natural $\delta $-comorphism
${\mathsf F}^\ast_\delta$
from $\delta ^{-1}E$ to a vector bundle
$F$ over $Z$ then $\delta ^{\sssize\#}$ will mean
the total of the ``twofold'' backward action
$$\boxed{
\delta ^{\sssize\#}:\Gamma \{\delta ^{-1}E\otimes \delta ^{-1}\wedge
T^\ast B\}\rightarrow \Omega (Z;F), \qquad
\delta ^{\sssize\#}=({\mathsf F}^\ast_\delta\otimes \wedge \delta ^\ast
)_{\sssize\#}
}$$
\end{define}

The definitions introduced heretofore correlate. For instance,
a posteriori given some $\delta$-comorphism ${\mathsf F}^\ast_\delta$,
we can apply the operation $\delta ^{\sssize\#}$ in the spirit of the Definition\ms\ref{Definition1}
to the differential form $\delta^{\star}\boldsymbol\rho $ by renaming in (\ref{2})
$B$ as $Z$, $X^\prime$ as $B$, ${{\fit k}}$ as $\delta $, and putting
${{\fit g}}={{\fit {id}}}$, $W=F$, $W^\prime =E$ and
${\mathsf F}^\ast _{{\fit k}}={\mathsf F}^\ast _\delta$,
which will then produce
$$
\delta ^{\sssize\#}\delta ^{\star}\boldsymbol\rho =({\mathsf F}^\ast _\delta\otimes
{{\mathsf{id}}})_{\sssize\#}\delta ^{\star}\boldsymbol\rho =({\mathsf F}^\ast _\delta\otimes
{{\mathsf{id}}})_{\sssize\#} ({{\mathsf{id}}}\otimes\wedge\delta ^\ast
)_{\sssize\#}\delta ^{-1}\boldsymbol\rho =\delta ^{\sssize\#}\delta
^{-1}\boldsymbol\rho
\;,$$
wherein the operation $\delta ^{\sssize\#}$ to the extreme right is defined in the spirit
of the Definition\ms\ref{Definition3} this time.

In what comes later, we shall {\it not\/} bother to indicate explicitly the
``partial'' character of the $\delta ^{(\cdot,{\sssize\#})}$ operation
at the occasions like that of the Definition\ms\ref{Definition2} any more, and shall exploit the same brief
notation $\delta ^{\sssize\#}$ in place of the more informative one, $\delta ^{(\cdot,{\sssize\#})}$,
because hardly any confusion will ever arise.

\begin{define}\label{Definition4}
Given a differential form
$\boldsymbol\theta \in \Omega (B;W^\ast )$ and a cross-section ${\eufb w} \in
\Gamma \{W\}$, we define the contraction $\left<{\eufb w} ,\boldsymbol\theta
\right>$ by $$ \left<{\eufb w} ,\boldsymbol\theta \right>=\left<{{\fit g}}^{-1}{\eufb w}
,\boldsymbol\theta \right>.  $$
\end{define}

One can easily verify that the following formula holds for
$\boldsymbol\omega\in\Omega(B;E^\ast)$ and ${\eufb e}\in\Omega^{0}(B;E)$,
\begin{equation}
\left<\delta^{-1}{\eufb e},\delta^{-1}\boldsymbol\omega\right>
=\delta^{-1}\left<{\eufb e},\boldsymbol\omega\right>
\,.\label{3}
\end{equation}
\subsection{Semi-basic differential forms}
Let $\pi :B\rightarrow Z$  be a surmersion of manifolds.
The reciprocal image ${\pi ^{-1}}{TZ}$ of the tangent bundle $TZ$ is
incorporated in the commutative diagram of vector bundle
homomorphisms of Fig.\mms\ref{Figure1}\,:
\begin{figure}[h]
$$
\xy
\xymatrix{
TB
\ar[dd]^{\tau_{_B}}
\ar[dr]|{\pi^{T}}
\ar[drrr]^-{T\pi}
\\
&\pi^{\scriptscriptstyle-1}(TZ)
\ar[rr]_-{\tau^{-1}\pi}
\ar[dl]|{\object+{\scriptstyle\pi^{-1}\tau}}
&&TZ
\ar[d]^{\tau}
\\
B
\ar[rrr]^\pi
&&&Z
}\endxy$$
\caption{}
\label{Figure1}
\end{figure}

The existence of the short exact sequence of vector bundle
homomorphisms
\begin{equation}
0\rightarrow T(B/Z)\buildrel{\iota}\over\longrightarrow TB
\buildrel{\pi^{\sssize T}}\over\longrightarrow\pi ^{-1}(TZ)\rightarrow 0
\label{4}
\end{equation}
allows us to define the module ${\frak V}_{\sssize B}=\Gamma
\{T(B/Z)\}$  of vertical (with respect to $\pi $)
vector fields on the manifold $B$.
Let ${\frak F}_{\sssize B}$ denote  the ring of $C^{\infty }$ functions over
the manifold $B$. The ${\frak F}_{\sssize B}$-module of all-direction vector fields over the manifold
$B$ will be denoted by ${\frak X}_{\sssize B}$.

Utilizing the partition of unity over the manifold $B$ one
can split the sequence (\ref{4}),
$$
0 \leftarrow T(B/Z) \buildrel{\overleftarrow{\;\iota}}\over\longleftarrow
TB \buildrel{\;\:\overleftarrow{\pi^{\sssize T}}}\over\longleftarrow  \pi^{-1}TZ \leftarrow 0.
$$

In fact, the restriction of the vector bundle homomorphism
$\overleftarrow{\iota}$ to the subbundle $\text{Im}\,\iota $ is the inverse to
the mapping $\iota $.  If $\eurb t=\iota (\eurb v)\in
\text{Im}\,\iota $, then $\iota \circ \overleftarrow{\iota} (\eurb t)
=\iota \circ \overleftarrow{\iota} \circ \iota (\eurb v) =\iota
({\eurb v})={\eurb t}$ and so $\iota \circ \overleftarrow{\iota}
={{\mathsf{id}}}$, q.e.d.

The reciprocal image $\pi ^{-1}(T^\ast Z)\approx (\pi
 ^{-1}(TZ))^\ast $ of the cotangent bundle $T^\ast Z$ is
incorporated in the commutative diagram of Fig.\mms\ref{Figure2}:
\begin{figure}[h]
$$
\xy
\xymatrix{
\wedge T^\ast B
\ar[dr]|{\object+{\scriptstyle{\overset\ast\tau}_{_B}}}
&&\pi^{\scriptscriptstyle-1}(\wedge T^\ast Z)
\ar[ll]_-{\wedge\pi^\ast}
\ar[rr]^-{\overset\ast\tau{}^{-1}\pi}
\ar[dl]|{\object+{\scriptstyle\pi^{-1}\overset\ast\tau}}
&&\wedge T^\ast Z
\ar[d]^{\overset\ast\tau} \\
&B\ar[rrr]^\pi
&&&Z
}\endxy
$$
\caption{}
\label{Figure2}
\end{figure}

Comorphism $\wedge \pi ^\ast $  is dual to the morphism
$T\pi $  in the sense that $\wedge \pi ^\ast =\wedge (\pi
^{\sssize T})^\ast $. We denote the effect of $\wedge \pi ^\ast $  on the
sections of the induced bundle $\pi ^{-1}\wedge T^\ast Z$  by $\pi
^{\sssize\#}$, so $\pi ^{\sssize\#}\boldsymbol\beta =(\wedge\!^{d}\pi ^\ast )\circ
\boldsymbol\beta$ if $\boldsymbol\beta \in \Gamma \{\pi ^{-1}\wedge\!^{d}T^\ast
Z\}$.

Let us show that the sequence of the homomorphisms of the modules
of cross-sections, which corresponds to the exact sequence (\ref{4}),
\begin{equation}
0\rightarrow {\frak V}_{\sssize B}\buildrel{\iota _{\sssize\#}}
\over\longrightarrow{\frak X}_{\sssize B}\buildrel{\pi _{\sssize\#}}
\over\longrightarrow\Gamma \{\pi ^{-1}TZ\}\rightarrow 0,
\label{5}
\end{equation}
is exact as well.

Let ${\eufb x}\in  \text{Ker}\,\pi_{\sssize\#}$. The exactness of the
sequence (\ref{4})
implies that ${\eufb x}\in \text{Im}\,\iota $. Then the cross-section
$\overleftarrow{\iota}\circ {\eufb x}$  is being mapped into ${\eufb x}$ under
the homomorphism $\iota _{\sssize\#}$, because we have $\iota
_{\sssize\#}(\overleftarrow{\iota}\circ {\eufb x})\equiv \iota \circ
\overleftarrow{\iota} \circ {\eufb x}={\eufb x}$.  Thus ${\eufb x}\in
\text{Im}\,\iota_{\sssize\#}$  and so $\text{Im}\,\iota _{\sssize\#}\supset \text{Ker}\,\pi_{\sssize\#}$.
Examining the surjectivity of $\pi _{\sssize\#}$ one sees easily that for
every ${\eufb h}\in \Gamma \{\pi^{-1}TZ\}$ the cross-section
$\overleftarrow{\pi^{\sssize T}}\circ {\eufb h}$
is mapped by the homomorphism $\pi
_{\sssize\#}$ into the cross-section ${{\eufb h}}$ because of $\pi
_{\sssize\#}(\overleftarrow{\pi^{\sssize T}}\circ {\eufb h})\equiv \pi \circ
\overleftarrow{\pi^{\sssize T}}\circ {\eufb h}={\eufb h}$. The injectivity
of the homomorphism $\iota _{\sssize\#}$ and the inclusion ${\text{Im}\,\iota
_{\sssize\#}}\subset {\text{Ker}\,\pi _{\sssize\#}}$  are still more obvious,
q.e.d.

Due to the exactness of the sequence (\ref{5}) one can identify the module
$\Gamma \{\pi ^{-1}TZ\}$  with the quotient module ${\frak X}_{\sssize B}/{\frak V}_{\sssize B}$.
We introduce the notation $\Omega (B)$
for the graded algebra of differential forms, $\Omega (B)=\sum
^{\dim B}_{d=0}\Omega ^{d}(B)$, so that ${\frak F}_{\sssize B}=\Omega
^{0}(B)$.  Let also ${\bold A}\msp1\negmedspace^{d}$ mean the functor of skew-symmetric
multilinear forms of degree $d$ on some module. For a vector bundle
$E$ we shall exploit the moduli isomorphism ${\bold A}\msp1\negmedspace^{d}{}\bigl(\Gamma
\{E\}\bigr)\approx \Gamma \{\wedge\!^{d}{}E^\ast \}$.
As we have just seen, the
${\frak F}_{\sssize B}$-algebras
${\bold A}({\frak X}_{\sssize B}/{\frak V}_{\sssize B})$  and
$\Gamma \{\pi ^{-1}\wedge T^\ast Z\}$ can be identified with each other.
Cross-sections of the bundle $\pi ^{-1}\wedge T^\ast Z$  are known
as semi-basic (with respect to $\pi )$ differential forms on the
manifold $B$. The graded algebra of these forms will be denoted
as $\Omega_{\sssize B}(Z)$. Remind that the exactness of the sequence (\ref{4})
means that the vector bundles $\pi ^{-1}TZ$  and
$TB/T(B/Z)$ are isomorphic. Passing to the dual bundles we obtain the
moduli isomorphism $\Gamma \{\pi ^{-1}\wedge T^\ast Z\}\approx \Gamma
\bigl\{\wedge \bigl(TB/T(B/Z)\bigr)^\ast \bigr\}$  and finish up with the double
isomorphism of ${\frak F}_{\sssize B}$-algebras
$$
\boxed{\Omega ^{d}_{\sssize B}(Z)\approx {\bold A}\msp2\negmedspace^{d}({\frak X}_{\sssize B}/
{\frak V}_{\sssize B})\approx \Gamma \bigl\{\wedge\!^{d}\bigl(TB/T(B/Z)\bigr)^\ast \bigr\}
}$$

The elements of the middle-term algebra will hereinafter be called
the horizontal (with respect to $\pi )$ differential forms and they
will be identified by means of the second isomorphism with such forms
$ \boldsymbol\alpha \in \Omega ^{d}(B)$
  that
$ \boldsymbol\alpha ({\eurb t}_{1},\ldots,{\eurb t}_{d})=0$
every time when at least one of the tangent
vectors ${\eurb t}_{1},\ldots ,{\eurb t}_{d}$  is vertical.

Let $ \boldsymbol\alpha \in {\bold A}\msp1\negmedspace^{d}{}({\frak X}_{\sssize B}/{\frak V}_{\sssize B})$. The
differential form $\boldsymbol\beta \in \Omega ^{d}_{\sssize B}(Z)$, such that $\boldsymbol\beta
({{\eufb h}}_{1},\ldots ,{{\eufb h}}_{d})= \boldsymbol\alpha ({{\eufb x}}_{1},\ldots
,{{\eufb x}}_{d})$ if ${{\eufb h}}_{i}=\pi _{\sssize\#}{{\eufb x}}_{i}$, is the
image of $ \boldsymbol\alpha $ under the isomorphism
${{\bold A}\msp1\negmedspace^{d}}{}({\frak X}_{\sssize B}/{\frak V}_{\sssize B})\approx
\Omega^{d}_{\sssize B}(Z)$.  Hereinafter we shall use one and the same notation
$\Omega_{\sssize B}(Z)$ for both algebras and we shall write $\Omega
_{r}(Z)$  when the manifold $Y_{r}$ will be considered in place of
$B$. We also introduce a separate notation ${\frak H}_{\sssize B}\equiv
{\frak H}_{\sssize B}(Z)$ for the module of cross-sections of the bundle $\pi
^{-1}TZ$; thus $\Omega ^{1}_{\sssize B}(Z)={{\frak H}_{\sssize B}}^\ast$. The
elements ${{\eufb h}}$ of the module ${\frak H}_{\sssize B}$ by means of the
vector bundle homomorphism $\tau^{-1}\pi $ are identified with the
corresponding lifts
$\vphantom{_x}\botsmash{\underset{\dsize\char"7E}{\botsmash{\frak h}}}$
of the morphism $\pi $, conventionally known as vector fields along $\pi $.
\section{Infinitesimal variations and the Lie derivative\label{Sec2}}
\fancyhead[CE,CO]{\slshape SECTION~\thesection: LIE DERIVATIVE}
\subsection{The Fr\'echet derivative of the base substitution}
Let $\delta :Z\rightarrow B$ be a morphism of manifolds and
let $\boldsymbol\alpha \in \Omega (B)$ be a differential form on the manifold $B$.
Assume $Z$ compact. The map $^{\star}\boldsymbol\alpha :C^{\infty }(Z,B)\rightarrow \Omega (Z)$
takes any morphism $\delta $ over to the differential form
$\delta ^{\star}\boldsymbol\alpha \in \Omega (Z)$.
The space $C^{\infty }(Z,B)$ of
$C^{\infty }$ mappings from $Z$ into $B$ has as its tangent vector at
the point
$\delta \in C^{\infty }(Z,B)$
some lift
$\frak b:Z\rightarrow TB$
of the morphism $\delta $.
Consider a one-parametric family $\delta _{t}$
of deformations of the morphism
$\delta$, $\delta _{t}\in C^{\infty }(Z,B)$,
$\delta _{0}=\delta $. The mapping
$\gamma _{\delta}:t\rightarrow\delta _{t}$
defines a smooth curve in the manifold
$C^{\infty }(Z,B)$. Let the lift $\frak b$ be
the tangent vector to this curve at the point $\delta $. Accordingly
to the definition of the tangent map, the Fr\'echet derivative of the
application $^{\star}\boldsymbol\alpha$ at the point $\delta $ is being evaluated
on the tangent vector $\frak b$ in the following way:
\begin{equation}\label{matsyuk:D*alpha}
(\,{\bold D}\,{^{\star}\boldsymbol\alpha}\,)(\delta)\,{\boldkey.}\,
\frak b
=(d/dt)(\,{^{\star}\boldsymbol\alpha}\,\circ \,\gamma _{\delta}\,)(0).
\end{equation}
\subsection{The Fr\'echet derivative  and the Lie derivative}
The operation of the Lie derivative in the direction of the lift
$\frak b=\tau^{-1}\delta \circ \bold{\tilde{\eufb b}}$
will be introduced via the formula (compare with~\cite{Bourbaki1971})
$$
\boxed{{\bold L}(\frak b)
={\bold d}{\delta}^{\sssize\#}{\bold i}(\bold{\tilde{\eufb b}})\delta^{-1}+
\delta ^{\sssize\#}{\bold i}(\bold{\tilde{\eufb b}})\delta^{-1}{\bold d}
}$$
which obviously generalizes the conventional one. The
derivation
${\bold i}(\bold{\tilde{\eufb b}})$ is defined in terms of the interior
product of the cross-section
$\bold{\tilde{\eufb b}}\in\Gamma \{\delta ^{-1}TB\}$.
The differential form
$\delta ^{-1}\boldsymbol\alpha$ is in $\Gamma \{\delta^{-1}\wedge T^\ast B\}$
whenever $\boldsymbol\alpha \in \Omega (B)$.

The proof of the formula
$$
\boxed{{\bold L}(\frak b)\boldsymbol\alpha
=(d/dt)\ {^{\star}\boldsymbol\alpha }\circ \gamma _\delta\ (0)
}$$
closely follows the lines of the proof of the corresponding analogue
for the conventional Lie derivative as follows:
\begin{proof}
It suffices to verify
the effect of the Lie derivative action upon functions and Pfaff forms alone since
${\bold L}(\frak b)$ acts as a derivation.

Indeed, first we convince ourselves that the construction
$\delta ^{\sssize\#}{\bold i}(\bold{\tilde{\eufb b}})\delta ^{-1}$
acts as a derivation of degree -1,
\begin{align*}
\delta ^{\sssize\#}{\bold i}(\bold{\tilde{\eufb b}})\delta ^{-1}(\boldsymbol\alpha \wedge
\boldsymbol\alpha^{\boldsymbol\prime} )&=\delta ^{\sssize\#}{\bold i}(\bold{\tilde{\eufb b}})
(\delta ^{-1}\boldsymbol\alpha \wedge \delta ^{-1}\boldsymbol\alpha^{\boldsymbol\prime} ) \\
&= \delta^{\sssize\#}{\bold i}(\bold{\tilde{\eufb b}})\delta^{-1}\boldsymbol\alpha\wedge
\delta^{\star}{\boldsymbol\alpha^{\boldsymbol\prime}}
+(-1)^{\deg (\boldsymbol\alpha )}\delta ^{\star}\boldsymbol\alpha
\wedge \delta ^{\sssize\#}{\bold i}(\bold{\tilde{\eufb b}})\delta ^{-1}\boldsymbol\alpha^{\boldsymbol\prime} .
\end{align*}
Then we remind that the exterior differential ${\bold d}$
acts as a derivation of degree  +1 and finally we notice that the Lie
derivative ${\bold L}(\frak b)$ appears to be their
commutator and by this fact is forced to act as a derivation of
degree  0  from the algebra $\Omega (B)$ into the algebra
$\Omega (Z)$ along the homomorphism $\delta^{\star}$,
$$
{\bold L}(\frak b)(\boldsymbol\alpha \wedge \boldsymbol\alpha^{\boldsymbol\prime} )
={\bold L}(\frak b)\boldsymbol\alpha\wedge\delta
^{\star}{\boldsymbol\alpha^{\boldsymbol\prime} }+\delta^{\star}\boldsymbol\alpha
\wedge {\bold L}(\frak b) \boldsymbol\alpha^{\boldsymbol\prime}
$$

The operator $(d/dt){{\delta_t}^{\star}}$ also acts as a derivation,
$$
(d/dt){{\delta_t}^{\star}}(\boldsymbol\alpha \wedge \boldsymbol\alpha^{\boldsymbol\prime} )
=(d/dt){{\delta_t}^{\star}}\boldsymbol\alpha \wedge
{\delta_t}^{\star}\boldsymbol\alpha^{\boldsymbol\prime} +{\delta_t}^{\star}
\boldsymbol\alpha \wedge (d/dt){\delta_t}^{\star}\boldsymbol\alpha^{\boldsymbol\prime} .
$$
So one
concludes that these operators coincide.
\end{proof}

The The Fr\'echet derivative~(\ref{matsyuk:D*alpha}) thus may be expressed by the
following computational formula,
\begin{equation}
\boxed{({\bold D}{^{\star}\boldsymbol\alpha })(\delta )\,{\boldkey.}\,
\frak b={\bold L}(\frak b)\,\boldsymbol\alpha
}
\label{6}
\end{equation}
\subsection{Fibre differential}
A semi-basic differential form $\boldsymbol\beta \in \Omega_{\sssize B}(Z)$  may be
considered as a fibred manifolds morphism
$\vphantom{_x}\botsmash{\underset{\dsize\char"7E}{\botsmash{\beta}}}:B\rightarrow
\wedge T^{{\ast }}Z$
over $Z$. In a more general way, let
$\zeta :F\rightarrow Z$  be a vector bundle and let
$\vphantom{_x}\botsmash{\underset{\dsize\char"7E}{\botsmash{\beta}}}:B\rightarrow F$
be a fibred morphism over the base
$Z$, \,$\zeta \circ \vphantom{_x}\botsmash{\underset
{\dsize\char"7E}{\botsmash{\beta}}}=\pi $ (see Fig.\mms\ref{Figure3} below).
{\begin{figure}[h]
\def\ubeta{\vphantom{_x}\botsmash{\underset{\dsize\char"7E}{\botsmash{\beta}}}}
$$\xy
\xymatrix{
&
\save \POS+<-2pt,-14pt>\drop+{TB}\POS="co"\restore
&&
\save \POS+<2pt,-14pt>\drop+{TF}\POS="com"\restore
\ar"co";"com"|{\object+{\scriptstyle T\ubeta}}
\\
VB\ar@{_(->}+UR+<-2\jot,2\jot>;"co"^-{\iota}
\ar[rrrr]|-{\object+{\scriptstyle V\ubeta}}
\ar[dddr]_{\tau_{_{B}}}
&&&&VF\approx\zeta^{\scriptscriptstyle -1}F
\ar@{^(->}+UL+<2\jot,2\jot>;"com"_-{\iota}
\ar[dd]|<>(.367){\object+{}}
\\
&&
\save \POS+<-15pt,7pt>
   \drop+{\pi^{\scriptscriptstyle -1}F\approx\ubeta^{\scriptscriptstyle -1}(VF)}
   \ar[ddl]
   \ar+UR;[urr]+DL
   \POS="comm"
   \ar[ull];"comm"+UL+<+3pt,-3pt>^-{\ubeta^V}
   \restore
&& &&
\save \POS+<0pt,7pt>
   \drop+{F}
   \ar[dd]_{\zeta}
   \POS="comment"
   \restore
   \ar"comm";"comment"^(.65){\zeta^{-1}(\pi)}
   \ar[ull];"comment"^-{\zeta^{-1}(\zeta)}
\\
&&&&F
\ar[drr]^{\zeta}
\\
&B\ar[urrr]^{\ubeta}
\ar[rrrrr]^{\pi}
&&&&&Z
}\endxy$$
\caption{}
\label{Figure3}
\end{figure}
}The restriction of the tangent mapping
$T\vphantom{_x}\botsmash{\underset{\dsize\char"7E}{\botsmash{\beta}}}$  to the bundle of
vertical tangent vectors gives rise to the vector bundle homomorphism
$V\vphantom{_x}\botsmash {\underset{\dsize\char"7E}
{\botsmash{\beta}}}:VB\rightarrow VF$  over the
morphism $\vphantom{_x}\botsmash{\underset
{\dsize\char"7E}{\botsmash{\beta}}}$  (we use more eco\-no\-mi\-cal
notations $VB$ and $VF$ for the bundles of vertical tangent vectors $T(B/Z)$
and $T(F/Z)$ over $Z$). Let ${\sigma _{\eurb f}}^\prime $ denote the
tangent vector to the curve $\sigma _{\eurb f}(t)$  which belongs
completely to the fibre $F_{z}$ of $F$ over $z\in Z$  and which
starts from
${\eurb f}\in F$, $\sigma _{\eurb f}(0)={\eurb f}$;
then the derivative $(d\sigma _{\eurb f}/dt)(0)$  also belongs to
the vector space $F_{z}$. The vertical tangent vector
${\sigma_{\eurb f}}^\prime $
is identified with the pair
$\bigl({\eurb f}; (d\sigma _{\eurb f}/dt)(0)\bigr)$
of the induced bundle $\zeta ^{-1}F$
and so the well-known isomorphism $VF\approx \zeta ^{-1}F$  over
$F$  holds. The homomorphism $V\vphantom{_x}\botsmash{\underset{\dsize\char"7E}{\botsmash{\beta}}}$
may be reduced to the base $B$ and the morphism so defined,
$(\vphantom{_x}\botsmash{\underset{\dsize\char"7E}{\botsmash{\beta}}})^{\sssize V}:
VB\rightarrow {\vphantom{_x}\botsmash{\underset{\dsize\char"7E}{\botsmash{\beta}}}}^{-1}VF$,
after the identification of
${\vphantom{_x}\botsmash{\underset{\dsize\char"7E}{\botsmash{\beta}}}}^{-1}VF\approx
{\vphantom{_x}\botsmash{\underset{\dsize\char"7E}{\botsmash{\beta}}}}^{-1}\zeta ^{-1}F$
with $\pi ^{-1}F$  acts
upon a vertical tangent vector ${\eurb v\in}VB$  as follows.
Suppose vector ${\eurb v}$ be tangent to a curve $\sigma _{b}$ in the
manifold $B$ and $b=\sigma _{b}(0)$. Then
${\vphantom{_x}\botsmash{\underset{\dsize\char"7E}{\botsmash{\beta}}}}^{\sssize V}({\eurb v})=\bigl(b;
(d/dt)({\vphantom{_x}\botsmash{\underset{\dsize\char"7E}{\botsmash{\beta}}}} \circ \sigma _{b})(0)\bigr)$.
This mapping ${\vphantom{_x}\botsmash{\underset{\dsize\char"7E}{\botsmash{\beta}}}}^{\sssize V}$ is linear at the
fibers of $F$ and may be thus thought of as a cross-section
${\bold d}_{\pi }\boldsymbol\beta $  of the bundle
$(VB)^{{\ast }}\otimes \pi ^{-1}F$,
\begin{equation}
\boxed{\left<{\eufb v},{\bold d}_{\pi }\boldsymbol\beta
\right>\,\approx\,
\pmb{\boldkey(}\,V\vphantom{_{_{_{_x}}}}\botsmash{\underset{\dsize\char"7E}{\beta}}\,\circ\,
{\eufb v}\,\pmb{\boldkey)\,\bold{\tilde{}}}
}\label{7}
\end{equation}

\subsubsection*{\indent The Lie derivative and fibrewise differentiation of semi-basic forms}
In what follows and to the end of current Section we shall be busy with
establishing the relationship between the Lie derivative and the fibre
differential of a semi-basic differential form

Consider a vector bundle $E\rightarrow B$ and its dual bundle $E^\ast\rightarrow B$.
If some $\boldsymbol\omega \in \Omega ^{d}(B;E^{{\ast }})$,
and if $\bold{\tilde{\eufb d}}$ is a cross-section of
the vector bundle $\delta ^{-1}E$, then, by definition,
$$\left<{\bold{\tilde{\eufb d}}},\delta^\star\boldsymbol\omega\right>({\eurb u}_{1},\ldots,{\eurb u}_{d})
=\left<\bold{\tilde{\eufb d}}(z),\delta^{\star}\boldsymbol\omega({\eurb u}_{1},\ldots,{\eurb u}_{d})\right>,
\quad\text{where}\quad{\eurb u}_{1},\ldots ,{\eurb u}_{d}\in T_{z}Z\,.
$$

Set $F=\wedge T^{{\ast }}$Z.  By means of the imbedding
${{\mathsf{id}}}\otimes \wedge \pi ^{{\ast }}$  the cross-sections of the bundle
$E^{{\ast }}\otimes \pi ^{-1}\wedge T^{{\ast }}Z$  are identified
with horizontal (with respect to $\pi $) $E^{{\ast }}$-valued
dif\-fe\-ren\-tial forms on the manifold $B$. The ${\frak F}_{\sssize B}$-module of these forms will be
denoted by $\Omega
_{\sssize B}(Z;E^{{\ast }})$.  The identification of it with a submodule in
$\Omega (B;E^{{\ast }})$  is carried out by the monomorphism $\pi
^{\sssize\#}$.

Let both
$\delta $ and $\delta _{t}$  be cross-sections of the fibred
manifold $B$ over $Z$, i.e.
$\pi \circ \delta _{t}=\pi \circ \delta ={{\fit {id}}}$.
In this case the vector $\frak b(z)$,
which is tangent to the curve
$\sigma _{{\delta }(z)}(t)=\delta _{t}(z)$,  will be vertical.

Set $E=VB$. As long as $\delta $ is a cross-section of the
projection $\pi $, the reciprocal image
$\delta ^{-1}{\bold d}_{\pi }\boldsymbol\beta$
of the cross-section ${\bold d}_{\pi }\boldsymbol\beta $  with respect
to the mapping $\delta $ will be a cross-section of the bundle
$\delta ^{-1}(VB)^{{\ast }}\otimes F$, since the bundle
$\delta ^{-1}\pi ^{-1}F\approx (\pi \circ \delta )^{-1}F$
has to be identified with $F$ by the projection
$\zeta ^{-1}{{\fit {id}}}$  onto the second factor,
${{\fit {id}}}^{-1}F\ni (z;{\eurb f})\mapsto{\eurb f}\in F$.
Let
$(\pi^{-1}\zeta)^{-1}\delta:\delta^{-1}\pi^{-1}F\rightarrow\pi^{-1}F$
be the projection onto the
second factor. It is straightforward that
$(\zeta ^{-1}\pi )\circ (\pi ^{-1}\zeta )^{-1}\delta
=\zeta ^{-1}(\pi \circ \delta )=\zeta ^{-1}{{\fit {id}}}$ .
Let now
$\delta ^{-1}{\vphantom{_x}\botsmash{\underset{\dsize\char"7E}{\botsmash{\beta}}}}^{\sssize V}$
denote the reciprocal image of the homomorphism
${\vphantom{_x}\botsmash{\underset{\dsize\char"7E}{\botsmash{\beta}}}}^{\sssize V}$.
Consider a cross-section $\bold{\tilde{\eufb b}}$ of the bundle
$\delta ^{-1}VB$  and denote $\frak b:Z\rightarrow VB$ the corresponding morphism
along the mapping $\delta $.
The definition of the contraction
$\left<\bold{\tilde{\eufb b}},\delta ^{-1}{\bold d}_{\pi }\boldsymbol\beta \right>\in \Gamma \{F\}$
is obvious (see the diagram of Fig.\mms\ref{Figure4} below):
$$\left<\bold{\tilde{\eufb b}},\delta ^{-1}{\bold d}_{\pi
}\boldsymbol\beta \right> =\zeta^{-1}{{\fit {id}}}\circ\delta
^{-1}{\vphantom{_x}\botsmash{\underset
{\dsize\char"7E}{\botsmash{\beta}}}}^{\sssize V}\circ \bold{\tilde{\eufb b}}
= \zeta ^{-1}\pi \circ (\pi ^{-1}\zeta )^{-1}\delta \circ \delta
^{-1}{\vphantom{_x}\botsmash{\underset
{\dsize\char"7E}{\botsmash{\beta}}}}^{\sssize V}\circ \bold{\tilde{\eufb b}}
=\zeta ^{-1}\pi \circ {\vphantom{_x}\botsmash{\underset{\dsize\char"7E}{\botsmash{\beta}}}}^{\sssize V}\circ
\frak b\,.$$
\begin{figure}[h]
\def\b{\vphantom{_x}\botsmash{\underset{\dsize\char"7E}{\botsmash{\beta}}}}
\def\s{\scriptscriptstyle}
$$\xy
\xymatrix{
&\delta^{\s-1}\pi^{\s-1}F
\ar`u+<0em,5ex>`[rrrrr]^{\zeta^{-1}({\fit {id}})}[rrrrr]
\ar[rrr]|{\object+{\scriptstyle(\pi^{-1}\zeta)^{-1}\delta}}
\ar[ddl]|<>(.507){\object+{}}
&&&\pi^{\s-1}F
\ar[rr]|{\object+{\scriptstyle\zeta^{-1}\pi}}
\ar[ddl]^{\pi^{-1}\zeta}
&&F
\ar[ddl]^{\zeta}\\
\delta^{\s-1}(VB)
\ar[ur]^{\delta^{-1}(\b^V)}
\ar[rrr]
&&&VB
\ar[ur]^{(\b^V)}
\ar[d]\\
Z
\ar[u]^{\bold{\tilde{\eufb b}}}
\ar@{.{>}}|{\object+{\scriptstyle\frak b}}[urrr]
\ar[rrr]|{\object+{\scriptstyle\delta}}
\ar`d[rrrrr]-<0em,5ex>`[rrrrr]_{{\fit {id}}}[rrrrr]
&&&B
\ar[rr]|{\object+{\scriptstyle\pi}}
&&Z
}\endxy$$
\caption{}
\label{Figure4}
\end{figure}

If we take $\wedge T^{{\ast }}Z$ in place of the fibre
bundle $F$ and if ${\eurb u}_{1},\ldots ,{\eurb u}_{d}\in T_{z}Z$,
then one can easily calculate:
\begin{multline*}
\left<\bold{\tilde{\eufb b}},\delta ^{-1}{\bold d}_{\pi }\boldsymbol\beta \right>
({\eurb u}_{1},\ldots ,{\eurb u}_{d})\\
=\bigl(\zeta ^{-1}\pi \circ {\vphantom{_x}\botsmash{\underset{\dsize\char"7E}{\botsmash{\beta}}}}^{\sssize V}\circ
\frak b(z)\bigr)({\eurb u}_{1},\ldots ,
{\eurb u}_{d})
=(d/dt)(\vphantom{_x}\botsmash{\underset{\dsize\char"7E}{\botsmash{\beta}}}\circ \sigma _{{\delta
}(z)}) ({\eurb u}_{1},\ldots ,{\eurb u}_{d})(0)\,.
\end{multline*}

Let
$(\pi ^{-1}\tau)^{-1}\delta
:\delta ^{-1}\pi ^{-1}TZ\rightarrow \pi ^{-1}TZ$
denote the standard projection
onto the second factor and let $\tau^{-1}{{\fit {id}}}$ denote the
obvious identification  ${{\fit {id}}}^{-1}TZ\approx TZ$  so that
$(\tau^{-1}\pi )\circ (\pi ^{-1}\tau)^{-1}\delta =\tau^{-1}{{\fit {id}}}$
(see again the diagram of Fig.\mms\ref{Figure4} with $\tau$ in place of $\zeta $ this time).
Let $\tau_{\sssize B}$ denote the projection $TB\rightarrow B$.
One computes (see Fig.\mms\ref{Figure5} below)
$T\pi \circ {\tau_{\sssize B}}^{-1}\delta =(\tau^{-1}\pi
)\circ \pi ^{\sssize T}\circ {\tau_{\sssize B}}^{-1}\delta
=(\tau^{-1}\pi)\circ(\pi^{-1}\tau)^{-1}\delta\circ(\delta^{-1}\pi^{\sssize T})
=\tau^{-1}{{\fit {id}}}\circ \delta ^{-1}\pi ^{\sssize T}$.
Composing with the mapping $\delta ^{\sssize T}$ it gives
$\tau^{-1}{{\fit {id}}}\circ (\delta ^{-1}\pi ^{\sssize T})\circ \delta
^{\sssize T} = T\pi \circ T\delta={{\mathsf{id}}}$; performing the transition
to the dual mappings, one obtains $\wedge \delta ^{\ast }\circ
(\delta ^{-1}\wedge \pi ^{{\ast }}) ={\overset \ast
{\tau}}{}^{-1}{{\fit {id}}}$.
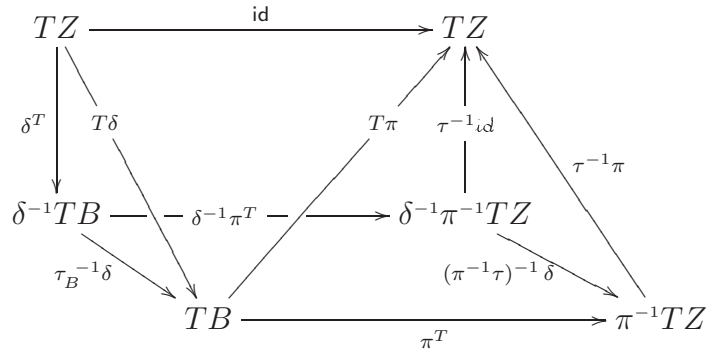
\begin{figure}[h]
\def\sz{\scriptstyle}
\def\ssz{\scriptscriptstyle}
$$\xy
\xymatrix{
TZ
\ar[rrr]^{{\mathsf{id}}}
\ar[dd]_{\delta^T}
\ar[dddr]|<>(.3){\object+{\sz T\delta}}
&&&TZ\\
\\
\delta^{\ssz-1}TB
\ar|<>(0.16){\object+{}}|<>(.41){\object+{\sz\delta^{-1}\pi^T}}|<>(0.67){\object+{}}[rrr]
\ar[dr]_(.35){\tau_{_B}{\!\!}^{-1}\delta}
&&&\delta^{\ssz-1}\pi^{\ssz-1}TZ
\ar[uu]|{\object+{\sz\tau^{-1}{\fit {id}}}}
\ar[dr]_(.35){(\pi^{-1}\tau){}^{-1}\,\delta}\\
&TB
\ar[uuurr]|<>(.7){\object+{\sz T\pi}}
\ar[rrr]_{\pi^T}
&&&\pi^{\ssz-1}TZ
\ar[uuul]_{\tau^{-1}\pi}
}\endxy$$
\caption{This is the upper part of the complete picture of Fig.\mms\ref{Figure13}.}
\label{Figure5}
\end{figure}

While the module of semi-basic differential forms
$\Gamma \{E^{{\ast }}\otimes \pi ^{-1}\wedge T^{{\ast }}Z\}$
is identified with the
module of horizontal differential forms on $B$ via the action of the
mapping
${{\mathsf{id}}}\otimes \wedge \pi ^{{\ast }}$
upon the corresponding
cross-sections, the reciprocal images with respect to $\delta $ are
identified by means of the action of the mapping
${{\mathsf{id}}}\otimes \delta ^{-1}\wedge \pi ^{{\ast }}$,
where
$\delta ^{-1}\wedge \pi ^{{\ast }}
:\delta ^{-1}\pi ^{-1}\wedge T^{{\ast }}Z\rightarrow \delta
^{-1}\wedge T^{{\ast }}B$
is the reciprocal image of the monomorphism
$\wedge \pi ^{{\ast }}$  with respect to the map $\delta $.
Thus,
if the differential form $\boldsymbol\alpha =\pi
^{\sssize\#}\boldsymbol\beta \equiv (\wedge \pi ^{{\ast }})\circ
\boldsymbol\beta $ is identified with the differential form $\boldsymbol\beta $
and
if the differential form
$\boldsymbol\omega =({{\mathsf{id}}}\otimes \wedge \pi ^{{\ast }})\circ {\bold d}_{\pi
}\boldsymbol\beta $ is identified with the differential form ${\bold d}_{\pi }\boldsymbol\beta $,
then the composition
$(\delta ^{-1}\wedge \pi ^{{\ast }})\circ \delta ^{-1}\boldsymbol\beta
=\delta ^{-1}\boldsymbol\alpha $
is identified with the differential form $\delta ^{-1}\boldsymbol\beta $
and the
composition $({{\mathsf{id}}}\otimes \delta ^{-1}\wedge \pi ^{{\ast }}) \circ
\delta ^{-1}{\bold d}_{\pi }\boldsymbol\beta =\delta ^{-1}\boldsymbol\omega $
is identified with the differential form
$\delta ^{-1}{\bold d}_{\pi }\boldsymbol\beta $.
(The diagrams of Figs\mms\ref{Figure14} and\mms\ref{Figure15} illustrate these identifications
and the accompanying  notational conventions as well as the computations following herein.)

The pulled-back differential form ${\delta_t}^{\star}\boldsymbol\alpha $
as a cross-section $Z\rightarrow\wedge T^{{\ast }}Z$ may be represented as follows:
\begin{align}
{\delta_t}^{\star}\boldsymbol\alpha
&={\delta_t}^{\sssize\#}{\delta_t}^{-1}\boldsymbol\alpha
 \equiv \wedge \delta ^{{\ast}}_{t}\circ {\delta_t}^{-1}\boldsymbol\alpha \notag \\
&\overset\sim\leftrightarrow \wedge \delta ^{{\ast }}_{t}\circ ({\delta_t}^{-1}
\wedge \pi ^{\ast })\circ {\delta_t}^{-1}\boldsymbol\beta
\;=\;{\overset \ast {\tau}}{}^{-1}{{\fit {id}}}\;\circ \;
{\delta_t}^{-1}\boldsymbol\beta\;\approx\;{\delta_t}^{-1}\boldsymbol\beta.
\label{8}\end{align}
The pulled-back form $\delta
^{\star}\boldsymbol\omega $  as a cross-section $Z\rightarrow \delta
^{-1}(VB)^{{\ast }} \otimes \wedge T^{{\ast }}Z$  may similarly be represented as
\begin{align*}
\delta ^{\star}\boldsymbol\omega
&={\delta}^{\sssize\#}{\delta}^{-1}\boldsymbol\omega
 \equiv ({{\mathsf{id}}}\otimes \wedge \delta
^{\ast })\circ \delta ^{-1}\boldsymbol\omega \\
&\overset\sim\leftrightarrow ({{\mathsf{id}}}\otimes \wedge \delta
^{\ast}) \circ ({{\mathsf{id}}}\otimes \delta^{-1}\wedge\pi^\ast)
\circ\delta^{-1}{\bold d}_{\pi}\boldsymbol\beta
=({{\mathsf{id}}}\otimes {\overset \ast {\tau}}{}^{-1}{{\fit {id}}})
\circ \delta^{-1}{\bold d}_{\pi }\boldsymbol\beta
\approx \delta ^{-1}{\bold d}_{\pi }\boldsymbol\beta.
\end{align*}
In (\ref{8}) we may also carry out explicitly the composition of
the map
${\overset \ast {\tau}}{}^{-1}{{\fit {id}}}$ with
${\delta_t}^{-1}\boldsymbol\beta $.  We insert the identity ${\overset
\ast {\tau}}{}^{-1}{{\fit {id}}}=({\overset \ast
{\tau}}{}^{-1}\pi)\circ(\pi^{-1}{\overset \ast
{\tau}})^{-1}\delta_{t}$ into (\ref{8}) and employ the definition of the
reciprocal image ${\delta_t}^{-1}\boldsymbol\beta $  together with the
definition of the $\pi $-morphism $\vphantom{_x}\botsmash{\underset{\dsize\char"7E}{\botsmash{\beta}}}$,
which read \;$({\overset \ast {\tau}}{}^{-1}\pi )\;\circ\;(\pi
^{-1}{\overset \ast {\tau}})^{-1} \delta _{t}\;\circ\;
{\delta_t}^{-1}\boldsymbol\beta\;=\;\vphantom{_x}\botsmash{\underset
{\dsize\char"7E}{\botsmash{\beta}}}\circ\delta _{t}$\,,\; to
obtain simply
${\delta_t}^{\star}\boldsymbol\alpha=\vphantom{_x}\botsmash{\underset
{\dsize\char"7E}{\botsmash{\beta}}}\circ\delta_{t}$.

\begin{rem}
{\rm We have in fact proved the following assertion.}
If $\boldsymbol\beta \in \Omega_{\sssize B}(Z)$  is a semi-basic differential form
with respect to a fibration $\pi :B\rightarrow Z$  and if
$\delta :Z\rightarrow B$
is a cross-section of that fibration, then
\begin{equation}
\boxed{\delta ^{\star}\boldsymbol\beta
=\vphantom{_{_{_{_x}}}}\botsmash{\underset{\dsize\char"7E}{\beta}}\circ
\delta }
\label{9}
\end{equation}
\end{rem}

Now it follows easily that
${\bold L}(\frak b)\boldsymbol\alpha =\left<{\bold{\tilde{\eufb b}}},{\delta}^{\star}\boldsymbol\omega\right>$.
\begin{proof}
Indeed, on the right-hand side here we have
\begin{align*}
\left<\bold{\tilde{\eufb b}},\delta^{\star}\boldsymbol\omega \right>({\eurb u}_{1},\ldots,{\eurb u}_{d})
&=\left<\bold{\tilde{\eufb b}},\delta ^{-1}{\bold d}_{\pi }\boldsymbol\beta \right> ({\eurb
u}_{1},\ldots ,{\eurb u}_{d}) \\
&=(d/dt)(\vphantom{_x}\botsmash{\underset{\dsize\char"7E}{\botsmash{\beta}}}\circ
\sigma _{\delta(z)})({\eurb u}_{1},\ldots,{\eurb u}_{d})(0),
\end{align*}
whereas on the left-hand side we proceed as follows,
\begin{align*}
{\bold L}(\frak b)\boldsymbol\alpha
&=(d/dt)({\delta_t}^{\star}\boldsymbol\alpha)(0)
=(d/dt)(\,{\overset \ast {\tau}}{}^{-1}{{\fit {id}}}\,\circ
\,{\delta_t}^{-1}\boldsymbol\beta\, )\,(0) \\
&= (d/dt)(\,{\overset \ast
{\tau}}{}^{-1}\pi \,\circ\,(\pi ^{-1}{\overset \ast
{\tau}})^{-1} \delta _{t}\,\circ\, {\delta_t}^{-1}\boldsymbol\beta
\,)\,(0) =(d/dt)(\vphantom{_x}\botsmash{\underset
{\dsize\char"7E}{\botsmash{\beta}}}\circ \delta _{t})(0),
\end{align*}
so that by evaluating on the vectors ${\eurb u}_{1},\ldots ,{\eurb
u}_{d}$  one regains that same result,
\begin{align*}
({\bold L}(\frak b)\boldsymbol\alpha ) ({\eurb u}_{1},\ldots ,{\eurb u}_{d}) &=
(d/dt)\bigl(\vphantom{_x}\botsmash{\underset
{\dsize\char"7E}{\botsmash{\beta}}}\circ \delta _{t}(z)\bigr)
({\eurb u}_{1},\ldots ,{\eurb u}_{d})(0) \\
&=(d/dt)(\vphantom{_x}\botsmash{\underset{\dsize\char"7E}{\botsmash{\beta}}}\circ\sigma_{\delta(z)})
({\eurb u}_{1},\ldots ,{\eurb u}_{d})(0).\qedhere
\end{align*}
\end{proof}

Not indicating explicitly the above mentioned identification of the
differential forms $\boldsymbol\alpha $ and $\boldsymbol\omega $ with the cross-sections
$\boldsymbol\beta $ and ${\bold d}_{\pi }\boldsymbol\beta $, one can write
\begin{equation}
\boxed{{\bold L}(\frak b)\boldsymbol\beta
=\left<\bold{\tilde{\eufb b}},\delta ^{\star}{\bold d}_{\pi }\boldsymbol\beta \right>
}\label{10}
\end{equation}
\section{Lagrange structure and the first variation\label{Sec3}}
\fancyhead[CE,CO]{\slshape SECTION~\thesection: FIRST VARIATION}
\subsection{Jet bundle structure}
By a classical field we mean a cross-section $\upsilon
:Z\rightarrow Y$ of a fibred manifold $\pi :Y\rightarrow Z$
over the base $Z$ in the category $C^{\infty }$. The jets of order
$r$ of such sections, each denoted $j_{r}\upsilon $, constitute the manifold $Y_{r}$ which is called
the $r^{\text{th}}$-order jet prolongation of the manifold $Y$ and we
put $Y=Y_{0}$. Projections ${^{r}\pi _s}:Y_{s}\rightarrow
Y_{r}$  for $r<s$  and $\pi _{r}:Y_{r}\rightarrow Z$  all are
surjective submersions and commute, $\pi _{r}\circ {^{r}\pi _s}=\pi
_{s}$.
Let ${\frak F}_{r}$ stand for
the ring ${\frak F}_{\sssize Y_{r}}$ of $C^{\infty }$ functions over
the manifold $Y_{r}$.
Monomorphisms ${^{r}\pi _s}^{\star}:{\frak F}_{r}\rightarrow {\frak F}_{s}$
and ${\pi _s}^{\star}:{\frak F}_{\sssize Z}\rightarrow {\frak F}_{s}$
allow us to identify the rings ${\frak F}_{r}$ and ${\frak F}_{\sssize Z}$
with the subrings ${^{r}\pi _s}^{\star} {\frak F}_{r}$ and
${\pi _s}^{\star}{{\frak F}_{\sssize Z}}$ of the ring ${\frak F}_{s}$.

Given another fibred manifold $Y^\prime $ over the same base $Z$ and
a base-preserving morphism $\phi :Y\rightarrow Y^\prime $, the morphism
\begin{equation}
J_{r}\phi :j_{r}\upsilon (z)\rightarrow j_{r}(\phi
\circ \upsilon)(z)
\label{11}
\end{equation}
from the manifold $Y_{r}$ to the manifold
${Y^\prime }_{r}$ is called the $r^{\text{th}}$-order prolongation of
the morphism $\phi$~\cite{Pommaret1978}.

\subsection{The variation of the Action functional}
A Lagrangian is a semi-basic (with respect to $\pi )$
differential form $\boldsymbol\lambda $ of maximal degree, $\boldsymbol\lambda \in \Omega
^{p}_{r}(Z)$, $p=\dim Z$.  Suppose again that the manifold $Z$ is
compact. Let ${\Cal Y}_{r}$ denote the space of smooth ($C^{\infty }$) cross-sections
of $Y_{r}$. The differential form $\boldsymbol\lambda $,
thought of as a morphism
$\vphantom{_x}\botsmash{\underset{\dsize\char"7E}{\lambda}} :Y_{r}\rightarrow
\wedge\!^{p}T^\ast Z$
along the projection $\pi
_{r}:Y_{r}\rightarrow Z$, defines a nonlinear differential
operator ${\check{\lambda}}$ in the space ${\Cal Y}=\Gamma
\{Y\}$  as follows:
\begin{equation}
{\check{\lambda}} (\upsilon
)=(j_{r}\upsilon )^{\star}\boldsymbol\lambda
=\vphantom{_x}\botsmash{\underset{\dsize\char"7E}{\lambda}}
\circ j_{r}\upsilon.
\label{12}
\end{equation}
The Action functional
$S = {\int_{_{\ssize Z}}} (j_{r}\upsilon )^{\star}\boldsymbol\lambda $
splits into the composition of three mappings, i.e.
$$
\boxed{S\ =\ \smallint_{\sssize Z}\ \circ\
{^{\star}\boldsymbol\lambda}\ \circ\ j_{r} }
$$
In the above, $j_{r}$ means
the $r^{\text{th}}$-order prolongation operator
$$ j_{r}:\Cal Y\rightarrow
\Cal Y_{r},\qquad \upsilon \mapsto j_{r}\upsilon ;
$$
$^{\star}\boldsymbol\lambda $  maps the space ${\Cal Y}_{r}$ into the
space of cross-sections of the determinant bundle ${\wedge\!^p}{T^\ast Z}$,
$$ ^{\star}\boldsymbol\lambda :{\Cal Y}_{r}\rightarrow \Omega ^{p}(Z),
\quad \upsilon_{r}\mapsto {\upsilon _r}^{\star}\boldsymbol\lambda,\quad
\upsilon_{r}\in {\Cal Y}_{r};
$$
$\smallint_{\sssize Z}$ is a linear
functional on the Banach space $\Omega ^{p}(Z)$, $$
\smallint_{\sssize Z}:\Omega ^{p}(Z)\rightarrow {\Bbb R},\qquad
\boldsymbol\beta \mapsto \int_{_{\ssize Z}}\boldsymbol\beta .
$$

The Euler-Lagrange equations for an extremal cross-section $\upsilon $
arise as the condition upon the Fr\'echet derivative ${\bold D}S(\upsilon )$
at the point $\upsilon $ to be equal to zero.
According to the chain rule,
\begin{equation}
{\bold D}(S)(\upsilon ) = \bigl( {\bold D}
\smallint_{\sssize
Z} \bigr) \bigl((j_{r}\upsilon)^{\star}\boldsymbol\lambda\bigr) \cdot
(\,{\bold D}\,{^{\star}\boldsymbol\lambda}\,)(j_{r}\upsilon ) \cdot
({\bold D}{j_{r}})(\upsilon )\;.
\label{13}
\end{equation}
Since the functional
$\smallint_{\sssize Z}$ is linear, its derivative ${\bold D}
\smallint_{\sssize Z}$  equals $\smallint_{\sssize Z}$ regardless
of the point $(j_{r}\upsilon )^{\star}\boldsymbol\lambda $.

We pass now to the  computation
of ${\bold D}(S)(\upsilon)$ in strictly consistent and {\it formal\/} manner.
\subsection{Differential of the map~$^\star\boldsymbol\lambda $}
In the classical field theory the variations of the Action functional
are computed with respect to those variations of functions, which are
fields themselves, that means, which are cross-sections of the
corresponding fibred manifolds. Thus, in the notations of Section\ms\ref{Sec2}, the
mappings $\delta $ and $\delta _{t}$ due to be cross-sections of the fibred
manifold $B\to Z$. In this case, and assuming also that the differential
form $\boldsymbol\alpha$ is semi-basic, one can write, according to (\ref{10}),
$$
{\bold L}(\frak b)\boldsymbol\alpha
=\left<\bold{\tilde{\eufb b}},\delta ^{\star}{\bold d}_{\pi }\boldsymbol\alpha \right>
\;.$$

Applying this formula along with the formula (\ref{6}) to the operator $^{\star}\boldsymbol\lambda $ by putting
$B=Y_{r}$, $\boldsymbol\alpha =\boldsymbol\lambda$, $\delta =j_{r}\upsilon $, and
substituting $\bold{\tilde{\eufb b}}$ by some
$\pmb{\widetilde{\frak y_r}}\in \Gamma \{{\upsilon_r}^{-1}VY_{r}\}$,
we come up finally to the desired calculative formula
\begin{equation}
\boxed{{\bold D}(^{\star}\boldsymbol\lambda )(j_{r}\upsilon
)\,{\boldkey.}\,\frak y_r
=\left<\pmb{\widetilde{\frak y_r}},(j_{r}\upsilon )^{\star}{\bold d}_{\pi }\boldsymbol\lambda\right>
}\label{14}
\end{equation}

\subsection{The permutation of the partial differentiations (Schwarz lemma)}\label{Sec3Par4Permutation}
In the following two Paragraphs we reproduce for the sake of the subsequent quotation the well-known technical
trick of the exchange in the order of applying the operation of the infinitesimal
variation and that of partial differentiation.
The tangent space to the manifold $\Cal Y_{r}$ at the point
$\upsilon _{r}$ is the space of cross-sections of the fibre bundle
${\upsilon_r}^{-1}V_{r}$ (from here on we introduce the more
economical notation $V_{r}$ in place of $V(Y_{r}))$. The manifold
$V_{r}$ along with being fibred over the base $Y_{r}$ by means of the
surmersion $\tau_{r}:T(Y_{r})\supset V_{r}\rightarrow Y_{r}$
is also fibred over the base $Z$ by means of the surmersion
$\pi_{r}\circ \tau_{r}:V_{r}\rightarrow Z$;
every time the latter
is implied we shall use the notation $(V_{r})_{\sssize Z}$. Cross-sections of
the bundle ${\upsilon_r}^{-1}V_{r}$  are identified with those
cross-sections of the fibred manifold $(V_{r})_{\sssize Z}$  which project
onto the mapping $\upsilon _{r}$, the totality of them denoted as
$\Gamma _{\upsilon _{r}}\{(V_{r})_{\sssize Z}\}$.  By means of the application
$J_{r}(\tau_{\sssize Y}):J_{r}(V_{\sssize Z})\rightarrow J_{r}(Y)\equiv Y_{r}$
the manifold $J_{r}(V_{\sssize Z})$ while fibred over the base $Z$ appears to
carry another fibred structure over the base $Y_{r}$. Say $\frak y$
be a lift of the cross-section $\upsilon :Z\rightarrow Y$ to the
vertical bundle $V$, then the cross-section $j_{r}\frak y$ of
the fibred manifold $J_{r}(V_{\sssize Z})$  is projected onto the
cross-section $j_{r}\upsilon $  under the mapping
$J_{r}(\tau_{\sssize Y})$ (see Fig.\mms\ref{Figure7} of Appendix~\ref{Appendix1}).

The isomorphism \ ${{\mathsf{is}}}$ \ between the manifolds $V_{s}$ and $J_{s}(V_{\sssize Z})$
over the base $Y_{s}$ is obtained from the following procedure.
To a vector ${\sigma _{y_{s}}}^\prime \in V_{y_{s}}(Y_{s})$, tangent
at the point $y_{s}=j_{s}\upsilon (z_{0})\in Y_{s}$  to the curve
$\sigma _{y_{s}}:t\mapsto j_{s}\upsilon _{t}(z_{0})$, the jet
$j_{s}\frak y(z_{0})\in J_{s}(V_{\sssize Z})$ is put into correspondence
the lift $\frak y$ being defined by the family of $\upsilon _{t}$,
i.e. $\frak y(z)={\sigma _{\upsilon (z)}}^\prime $ where for each $z$
the curve $\sigma _{\upsilon (z)}:t\mapsto \upsilon _{t}(z)$  is
contained in the fibre $Y_{z}$ of the fibred manifold Y.

Conversely, given a jet $j_{s}\frak y(z_{0})$  of some lift
$\frak y:Z\rightarrow V$ along the cross-section
$\upsilon =\tau_{\sssize Y}\circ \frak y:Z\rightarrow Y$,
we construct for each vertical tangent vector
$\frak y(z)$  an integral curve
$\sigma _{\upsilon (z)}(t)$
and hence the family of cross-sections
$\upsilon _{t}:z\mapsto \sigma _{\upsilon (z)}(t)$.
Then under the mapping
\ ${{\mathsf{is}}}$ \ the vertical tangent vector $(j_{s}\upsilon _{t}(z_{0}))^\prime $
is sent to the $s^{\text{th}}$-order jet at $z_{0}$ of the lift
$z\mapsto {\sigma _{\upsilon (z)}}^\prime =\frak y(z)$.

This very isomorphism  \ ${{\mathsf{is}}}$ \ acts upon the cross-sections of the
corresponding fibred manifolds over the base $Z$: if
$\frak y_{r}\in \Gamma _{\upsilon _{r}}\{(V_{r})_{\sssize Z}\}$,
then  ${{\mathsf{is}}}_{\sssize\#}(\frak y_{r})
\equiv {{\mathsf{is}}}\circ \frak y_{r}\in \Gamma _{\upsilon_{r}}\{J_{r}(V_{\sssize Z})\}$,
where $\Gamma _{\upsilon _{r}}$ in the second
membership relation means that only those cross-sections of
$J_{r}(V_{\sssize Z})$  count, which project onto $\upsilon _{r}$ under the
application $J_{r}(\tau_{\sssize Y})$.

\subsection{The differential of $j_{r}$}
Now we are going to prove the legitimacy of the diagram of Fig.\mms\ref{Figure6}\,:
\begin{figure}[h]
\def\sz{\scriptstyle}
\def\s{\scriptscriptstyle}
$$\xy\xymatrix{
&T_{j_r\upsilon}({\Cal Y}_r)=\Gamma_{j_r\upsilon}\{(V_r)_{\s Z}\}
\ar^{{\mathsf{is}}_{\#}}+DR;[dr]\\
T_\upsilon({\Cal Y})=\Gamma_\upsilon\{V_{\s Z}\}
\ar^{{\bold D}(j_r)(\upsilon)}[ur]+DL
\ar|{\object+{\sz j_r}}[rr]
&&\Gamma_{j_r\upsilon}\{J_r(V_{\s Z})\}
}\endxy$$
\caption{}
\label{Figure6}
\end{figure}
\begin{proof}
The differential of the mapping $j_{r}$ takes a vector
${\gamma _{\upsilon }}^\prime =\frak y$,
tangent to the curve
$\gamma _{\upsilon }:t\mapsto \upsilon _{t}$
at the point
$\upsilon =\gamma _{\upsilon }(0)\in \Cal Y$,
over to the vector
$\frak y_{r}=(j_{r}\circ \gamma _{\upsilon })^\prime $,
tangent to the curve
$t\mapsto j_{r}\upsilon _{t}$  at the point
$j_{r}\upsilon \in \Cal Y_{r}$, hence
$({\bold D}j_{r})(\upsilon )
:\Gamma
_{\upsilon }\{V_{\sssize Z}\}\rightarrow
\Gamma _{j_{r}\upsilon }\{(V_{r})_{\sssize Z}\}$.
The cross-section
$\frak y_{r}:z\mapsto {\sigma _{j_{r}\upsilon (z)}}^\prime $
is mapped under  ${{\mathsf{is}}}_{\sssize\#}$  into the cross-section
${{\mathsf{is}}}_{\sssize\#}(\frak y_{r}):z\mapsto j_{r}\frak y(z)$
of the fibred manifold $J_{r}(V_{\sssize Z})$, that is,
\ ${{\mathsf{is}}}_{\sssize\#}(\frak y_{r})=j_{r}\frak y$\,.
\,Thus in order to compute the Fr\'echet differential of the jet prolongation operator $j_{r}$
one may utilize the following permutation formula,
\begin{equation}
{\bold D}(j_{r})(\upsilon)\,{\boldkey.}\,{\frak y}=
{{\mathsf{is}}}^{-1}_{\sssize\#}j_{r}\frak y\;.
\label{15}
\end{equation}
\end{proof}

\subsection{The first variation}
From (\ref{14}) and (\ref{15}) we obtain the differential of the composed
mapping, $\,^{\star}\boldsymbol\lambda\,\circ\,j_{r}\,$,
\begin{align}
{\bold D}\,(\,^{\star}\boldsymbol\lambda\,\circ\,j_{r}\,)\,(\upsilon)\,{\boldkey.}\,{\frak y} & =
(\,{\bold D}\,{^{\star}\boldsymbol\lambda}\,)(j_{r}\upsilon ) \cdot
({\bold D}{j_{r}})(\upsilon )\,{\boldkey.}\,{\frak y} \notag\\
&=\left<\,\pmb{\boldkey(}{{\mathsf{is}}}^{-1}_{\sssize\#}j_{r}\frak y\pmb{\boldkey)\,\bold{\tilde{}}}{}\,,
\,(j_{r}\upsilon )^{\star}{\bold d}_{\pi }\boldsymbol\lambda \,\right>,
\label{16}
\end{align}

and, finally, from (\ref{13}),
the desired expression for the differential of the Action functional
$$
\boxed{{\bold D}S(\upsilon )\,{\boldkey.}\,{\frak y}
={\int_{_{\ssize Z}}}\left<\,\pmb{\boldkey(}{{\mathsf{is}}}^{-1}_{\sssize\#}j_{r}\frak y\pmb{\boldkey)\,\bold{\tilde{}}}{}\,,
\,(j_{r}\upsilon )^{\star}{\bold d}_{\pi }\boldsymbol\lambda \,\right>
}$$
\clearpage
\section{Integration by parts\label{Sec4}}
\fancyhead[CE,CO]{\slshape SECTION~\thesection: INTEGRATION BY PARTS}
\flushpar To proceed further we need to extend the definition of the fibre
differential ${\bold d}_{\pi }$ to the module of semi-basic
$\wedge\!^{d}V^{{\ast }}_{r}$-valued differential forms of
arbitrary degree $d$, \  $\Omega _{r}(Z;\wedge\!^{d}V^{{\ast
}}_{r})$,\    and to introduce the notion of total (global)
differential ${\bold d}_{t}$. This is being done in Appendix~\ref{Appendix2}.
Our considerations there as well as within this Section
essentially follow those of~\cite{Theses}.

\subsection{}
As far as we shall work with differential forms of different orders,
we shall frequently need to bring them together to the same base
manifold (if a differential form belongs to
$\Omega _{s}(Z;V^{{\ast }}_{r})$
we call the pair $(s,r)$ be the order of that form). We
recall that the homomorphism $(^{r}\pi _{s})^{\sssize V}$  maps $V_{s}$ to
${^{r}\pi_s}^{-1}V_{r}$. The dual homomorphism
$({^{r}\pi_s}\,^{\sssize V})^{{\ast}}$
acts upon the cross-sections of the dual vector
bundle ${^{r}\pi_s}^{-1}V^{{\ast }}_{r}$  through the composition
and hence it acts upon the module
$\Omega _{s}(Z;{^{r}\pi_s}^{-1}\wedge V^{{\ast }}_{r})$
by means of composing its elements
(viewed as cross-sections) with
$\wedge ({^{r}\pi_s}\,^{\sssize V})^{{\ast
}}\otimes {{\mathsf{id}}}$. Given a morphism ${{\fit g}}$ from whatsoever the
source be to the manifold $Y_{s}$, by $(^{r}\pi _{s})^{\sssize\#}$  the
reciprocal image of this action with respect to ${{\fit g}}$ will be denoted, namely, if $\boldsymbol\omega
\in \Gamma \{({^{r}\pi _s}\circ {{\fit g}})^{-1}\wedge V^{{\ast
}}_{r}\otimes (\pi _{s}\circ {{\fit g}})^{-1}\wedge T^{{\ast }}Z\}$, then
$(^{r}\pi _{s})^{\sssize\#}\doteqdot (V\,{^{r}\pi _s})^{\sssize\#}\doteqdot  \bigl({{\fit g}}^{-1}({^{r}\pi
_s}^{\sssize V})\bigr)^{\sssize\#}\doteqdot \bigl({{\fit g}}^{-1}(\wedge{^{r}\pi _s}^{\sssize
V}{}^{\ast})\bigr)_{\sssize\#} \doteqdot ({{\fit g}}^{-1}(\wedge{^{r}\pi _s}^{\sssize
V}{}^{\ast})\,\otimes \,{{\mathsf{id}}})_{\sssize\#}$, and $(^{r}\pi
_{s})^{\sssize\#}\boldsymbol\omega =({{\fit g}}^{-1}(\wedge{^{r}\pi _s}^{\sssize
V}{}^{\ast})\,\otimes \,{{\mathsf{id}}})\circ \boldsymbol\omega \in \Gamma \{{{\fit g}}^{-1}\wedge
V^{{\ast }}_{s}\otimes (\pi _{s}\circ {{\fit g}})^{-1}\wedge T^{{\ast }}Z\}$.
Let $\delta $ be another morphism which composes with ${{\fit g}}$ on the
left.  Then
\begin{equation}
\delta ^{\star}(^{r}\pi _{s})^{\sssize\#}\boldsymbol\omega
=(^{r}\pi _{s})^{\sssize\#}\delta ^{\star}\boldsymbol\omega.
\label{17}
\end{equation}

Indeed, by definitions, $\delta ^{\star}(^{r}\pi _{s})^{\sssize\#}\boldsymbol\omega =
({{\mathsf{id}}}\;\otimes\;\wedge \delta ^{{\ast }})\circ (\delta
^{-1}({^{r}\pi_s}^{\sssize\#}\boldsymbol\omega ))$.  But $\delta ^{-1}({^{r}\pi_s}
^{\sssize\#}\boldsymbol\omega )= \delta ^{-1}(({{\fit g}}^{-1}\wedge {^{r}\pi_s}^{\sssize V}{}^{\ast
}\;\otimes\;{{\mathsf{id}}})\circ \boldsymbol\omega )=(({{\fit g}}\circ \delta
)^{-1}\wedge {^{r}\pi_s}^{\sssize V}{}^{\ast }\;\otimes\;{{\mathsf{id}}})\circ
(\delta ^{-1}\boldsymbol\omega )$  and also $({{\mathsf{id}}}\;\otimes\;\wedge
\delta ^{\ast })\circ (({{\fit g}}\circ \delta )^{-1}\wedge
{^{r}\pi_s}^{\sssize V}{}^{\ast }\;\otimes\;{{\mathsf{id}}})=({{\fit g}}\circ
\delta )^{-1}\wedge {^{r}\pi_s} ^{\sssize V}{}^{\ast
}\;\otimes\;\wedge \delta ^{\ast }$.  On the other hand, $(^{r}\pi
_{s})^{\sssize\#}\delta ^{\star}\boldsymbol\omega = (({{\fit g}}\circ \delta
)^{-1}\wedge {^{r}\pi_s}^{\sssize V}{}^{\ast }\;\otimes\;{{\mathsf{id}}})\circ
({{\mathsf{id}}}\;\otimes\;\wedge \delta ^{\ast })\circ (\delta
^{-1}\boldsymbol\omega )$  and also $(({{\fit g}}\circ \delta )^{-1}\wedge {^{r}\pi_s}
^{\sssize V}{}^{\ast }\;\otimes\;{{\mathsf{id}}})\circ ({{\mathsf{id}}}\;\otimes\;
\wedge \delta ^{{\ast }})=({{\fit g}}\circ \delta )^{-1}\wedge {^{r}\pi_s}
^{\sssize V}{}^{\ast }\;\otimes\;\wedge \delta ^{\ast }$. So one concludes
that (\ref{17}) holds, q.e.d.

\subsection{}
We are now ready to write down the decomposition formula of {\smc kol\'{a}\v{r}}\kern0pt\footnotemark[2]
\footnotetext[2]{\kern3pt This is the generalization to an arbitrary
order of the decomposition formula adduced by {\smc trautman} in~\cite{Trautman1975}.  It has its counterpart in the algebra $\Omega (Y_{r})$,
where it is known under the name of the first variation formula~\cite{Krupka1973}. As long as the field theory is concerned and thereby the
splitting of the set of variables into independent and dependent ones
by $\pi $ is recognized, it is our opinion that the bigraded algebra
$\Omega_{s}(Z;V^{{\ast }}_{r})$
is a more appropriate object to work
with than the complete skew-symmetric algebra $\Omega (Y_{r})$.}
in terms of semi-basic differential forms
which take values in vector bundles $V^{{\ast }}$ and $V^{{\ast
}}_{r-1}$.

\begin{prop}\label{Proposition 1}
Given a Lagrangian $\boldsymbol\lambda \in \Omega^{p}_{r}(Z)$  there exist
semi-basic differential forms
$\boldsymbol\epsilon\in \Omega ^{p}_{2r}(Z;V^{{\ast }})$  and
$\boldsymbol\kappa\in \Omega ^{p-1}_{2r-1}(Z;V^{{\ast }}_{r-1})$ such
that
\begin{equation}
{^{r}\pi _{2r}}^{\star}\,{\bold d}_{\pi }\boldsymbol\lambda
\,=\,(^{0}\pi
_r)^{\sssize\#}\,\boldsymbol\epsilon+{\bold d}_{t}\,\boldsymbol\kappa. \label{18}
\end{equation}
The form $\boldsymbol\epsilon$ is unique inasmuch as its order is
fixed and equals $(2r,0)$~\cite{KolarNoveMesto}.
\end{prop}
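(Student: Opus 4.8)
The plan is to establish the decomposition chart by chart by iterated integration by parts and then to recognise the two resulting pieces as global objects, the one of order $(2r,0)$ being moreover unique. Fix a fibred chart with coordinates $(x^{k},y^{i}_{\alpha})$, $0\le|\alpha|\le r$, write $\boldsymbol\lambda=L\,dx^{1}\wedge\cdots\wedge dx^{p}$ with $L\in{\frak F}_{r}$, and abbreviate the volume factor by $dx$; by (\ref{7}) the fibre differential reads ${\bold d}_{\pi}\boldsymbol\lambda=\sum_{i}\sum_{0\le|\alpha|\le r}(\partial L/\partial y^{i}_{\alpha})\,{\bold d}_{\pi}y^{i}_{\alpha}\otimes dx$, where the ${\bold d}_{\pi}y^{i}_{\alpha}$ are the basic local sections of $V^{\ast}_{r}$ associated with the jet coordinates. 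The computation rests only on the rule $D_{k}\,{\bold d}_{\pi}y^{i}_{\gamma}={\bold d}_{\pi}y^{i}_{\gamma+1_{k}}$, which expresses the total derivative of a contact section of order $\gamma$ through the one of order $\gamma+1_{k}$, on the Leibniz rule for the total derivatives $D_{k}$, and on the identity ${\bold d}_{t}(g\otimes\iota_{\partial_{k}}dx)=(D_{k}g)\otimes dx$; put together they give, for any coefficient $f$ and any $\gamma$,
\begin{multline*}
f\,{\bold d}_{\pi}y^{i}_{\gamma+1_{k}}\otimes dx\\
={\bold d}_{t}\bigl(f\,{\bold d}_{\pi}y^{i}_{\gamma}\otimes\iota_{\partial_{k}}dx\bigr)
-(D_{k}f)\,{\bold d}_{\pi}y^{i}_{\gamma}\otimes dx .
\end{multline*}

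Applying this identity to each term of ${^{r}\pi_{2r}}^{\star}{\bold d}_{\pi}\boldsymbol\lambda$ with $|\alpha|\ge 1$ splits off a ${\bold d}_{t}$-exact $(p-1)$-form summand, which I accumulate into ${\bold d}_{t}\boldsymbol\kappa$, and lowers by one the multi-index order of the contact factor at the cost of one further total differentiation of the coefficient; all terms are read from now on over $Y_{2r}$. A downward induction on $|\alpha|$ terminates after at most $r$ rounds, leaving only the contact factors ${\bold d}_{\pi}y^{i}$, which are local sections of $V^{\ast}=V^{\ast}_{0}$, multiplied by the higher-order Euler-Lagrange coefficients $\sum_{|\alpha|\le r}(-1)^{|\alpha|}D_{\alpha}(\partial L/\partial y^{i}_{\alpha})$, of order at most $2r$; these assemble into $\boldsymbol\epsilon\in\Omega^{p}_{2r}(Z;V^{\ast})$, which enters (\ref{18}) after the canonical change of value bundle $(^{0}\pi_{r})^{\sssize\#}$. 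The terms collected into $\boldsymbol\kappa$ are $(p-1)$-forms whose contact factors have order at most $r-1$ and whose coefficients have order at most $2r-1$, so that $\boldsymbol\kappa\in\Omega^{p-1}_{2r-1}(Z;V^{\ast}_{r-1})$; hence (\ref{18}) holds over every chart.

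It remains to pass from this chart-wise statement to an intrinsic one, and here lies the only genuine difficulty. For $\boldsymbol\epsilon$ nothing has to be glued, since it is the Euler-Lagrange form, which is coordinate-free, and this at the same time yields its uniqueness: if $(^{0}\pi_{r})^{\sssize\#}\boldsymbol\epsilon'=(^{0}\pi_{r})^{\sssize\#}\boldsymbol\epsilon+{\bold d}_{t}\boldsymbol\mu$ with $\boldsymbol\epsilon'$ again of order $(2r,0)$, then for every field $\upsilon$ and every variation $\frak y$ one pulls the difference back along $j_{2r}\upsilon$ and contracts it with $\frak y$ through ${{\mathsf{is}}}$ as in (\ref{16}); the contribution of ${\bold d}_{t}\boldsymbol\mu$ becomes a total differential on $Z$ and integrates to zero by Stokes' theorem on the compact $Z$, so that $\left<\widetilde{\frak y},(j_{2r}\upsilon)^{\star}(\boldsymbol\epsilon'-\boldsymbol\epsilon)\right>$ integrates to zero over $Z$ for all $\upsilon$ and $\frak y$, whence by the fundamental lemma of the calculus of variations $(j_{2r}\upsilon)^{\star}(\boldsymbol\epsilon'-\boldsymbol\epsilon)=0$ for every $\upsilon$, and, every point of $Y_{2r}$ being of the form $j_{2r}\upsilon(z)$, $\boldsymbol\epsilon'=\boldsymbol\epsilon$. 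For $\boldsymbol\kappa$ there is genuine freedom and no canonical global primitive falls out of the local formula by itself; the clean way to produce one, and the step I expect to be the technical heart of the argument, is to replace the single-pass peeling by an induction on the order $r$ in which $\boldsymbol\lambda$ is traded at each stage for an equivalent Lagrangian of order $r-1$ together with a naturally, hence chart-independently, defined $(p-1)$-form, iterating down to order zero; this is precisely the reformulation of Kol\'{a}\v{r}'s construction~\cite{KolarNoveMesto} in the present language of vector bundle valued semi-basic forms. Equivalently, one observes that ${^{r}\pi_{2r}}^{\star}{\bold d}_{\pi}\boldsymbol\lambda-(^{0}\pi_{r})^{\sssize\#}\boldsymbol\epsilon$ is globally defined and locally ${\bold d}_{t}$-exact and extracts from it a global primitive $\boldsymbol\kappa$ by the usual acyclicity of the horizontal complex in degree $p-1$.
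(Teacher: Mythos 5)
Your chart-wise integration by parts, the identification of $\boldsymbol\epsilon$ with the (coordinate-free) Euler--Lagrange form, and the uniqueness argument via pull-back along arbitrary $j_{2r}\upsilon$ and the fundamental lemma are correct and amount to precisely the local-coordinate strategy the paper itself only gestures at: its entire proof is the remark that the decomposition ``may be carried out in the explicit local coordinate form by the undetermined coefficients method'', with the construction of a global $\boldsymbol\kappa$ deferred to Kol\'a\v{r}'s cited work. Your primary route for globalising $\boldsymbol\kappa$ --- trading $\boldsymbol\lambda$ order by order for an equivalent lower-order Lagrangian plus a naturally defined $(p-1)$-form --- is exactly that construction and is sound; only the alternative appeal to ``the usual acyclicity of the horizontal complex in degree $p-1$'' should not be presented as immediate, since local ${\bold d}_{t}$-exactness of a global form does not by itself yield a global primitive without a further gluing argument, and the paper explicitly stresses that no formally intrinsic choice of $\boldsymbol\kappa$ exists.
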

\begin{proof}
The proof of Proposition~\ref{Proposition 1} may be carried out in the explicit local coordinate form by the undetermined coefficients
method. Non-existence of a formally intrinsic proof is closely related
to non-uniqueness of the differential form $\boldsymbol\kappa$.
\end{proof}
\begin{rem}\label{d_tExactness}
The differential form~$\boldsymbol\kappa$ is defined by the above decomposition up to a
${\bold d}_{t}$-closed term. Under certain additional restrictions on the structure of~$\boldsymbol\kappa$, it may be defined up to a
$d_t$-exact term, as seen from~\cite{Horak}.
\end{rem}

\subsection{}
By the Nonlinear Green Formula we mean herein the expression of the Fr\'echet derivative at the point
$\upsilon\in \Gamma \{Y\}$  of the operator
${\check{\lambda}}={^{\star}\boldsymbol\lambda}\circ j_{r}$
(see (\ref{12})) in terms of its transpose
$^{t}\bigl({\bold D}{\check{\lambda}}(\upsilon )\bigr)$
and of the Green operator ${\bold G}$~\cite{Bourbaki1971}
\begin{equation}
\left<({\bold D}{\check{\lambda}})(\upsilon )({\frak y}),1\right>
=\left<{\frak y},{}{^{t}\bigl({\bold D}{\check{\lambda}}(\upsilon)\bigr)}(1)\right>
+{\bold d}({\bold G}({\frak y},1)).
\label{19}
\end{equation}
We recall that the transpose operator
$^{t}\bigl({\bold D}{\check{\lambda}}(\upsilon )\bigr)$
is of the type
$(\wedge\!^{p}T^{{\ast }}Z)^{{\ast}}\otimes
\wedge\!^{p}T^{{\ast}}Z\rightarrow\upsilon^{-1}V^{{\ast}}
\otimes\wedge\!^{p}T^{{\ast }}Z$
whereas
${\bold D}{\check{\lambda}}(\upsilon )$
is of the type
$\upsilon ^{-1}V\rightarrow \wedge\!^{p}T^{{\ast }}Z$
and therefore the
Green operator has to be of the type
$(\upsilon ^{-1}V,\;(\wedge\!
^{p}T^{{\ast }}Z)^{{\ast }}\otimes \wedge\!^{p}T^{{\ast }}Z)
\rightarrow \wedge ^{p-1}T^{{\ast }}Z$.
Also the isomorphism
$(\wedge\!^{p}T^{{\ast }}Z)^{{\ast }}\otimes
\wedge\!^{p}T^{{\ast }}Z\approx {\Bbb R}_{\sssize Z}$
holds and under it the
contraction on the left-hand side of (\ref{19}) locally looks like
\begin{align*}
\left<\boldsymbol\mu,1\right>
&=\left<\mu _{0}{\bold d\xi }^{1}\wedge \ldots
\wedge {\bold d\xi }^{p},\partial /\partial \xi ^{1}\wedge \ldots
\wedge \partial /\partial \xi ^{p}\otimes
{\bold d}\xi^{1}\wedge \ldots \wedge {\bold d}\xi^{p}\right> \\
&= \mu _{0}{\bold d\xi }^{1}\wedge \ldots
\wedge{\bold d\xi }^{p}.
\end{align*}

That this Green formula (otherwise called the
 ``integration-by-parts'' formula) is obtained by so to say ``evaluating''
 the {\smc kol\'{a}\v{r}} decomposition formula (\ref{18}) along the
submanifold $j_{2r}\upsilon $, becomes clear to the end of present Section.
The demonstration will be carried out in three steps.

\subsubsection*{
(\romannumeral1)
}
Applying ${j_{2r}\upsilon}^{\star}$  to (\ref{18}) gives
\begin{equation}
(j_{r}\upsilon )^{\star}{\bold d}_{\pi }\boldsymbol\lambda
=(j_{2r}\upsilon )^{\star}({^{0}\pi _r})^{\sssize\#}\boldsymbol\epsilon
+(j_{2r}\upsilon )^{\star}{\bold d}_{t}\boldsymbol\kappa.
\label{20}
\end{equation}
On the other hand, by (\ref{16})
\begin{equation}
({\bold D}{\check{\lambda}})(\upsilon )({\frak y})
=\left<\pmb{\boldkey(}{{\mathsf{is}}}^{-1}_{\sssize\#}j_{r}{\frak y}\pmb{\boldkey)\,\bold{\tilde{}}},
(j_{r}\upsilon )^{\star}{\bold d}_{\pi }\boldsymbol\lambda \right>.
\label{21}
\end{equation}

\subsubsection*{
(\romannumeral2)
}
In what concerns the first addend of the right-hand side of (\ref{20}), one first
applies (\ref{17}) to get
$$
(j_{2r}\upsilon )^{\star}({^{0}\pi _r})^{\sssize\#}\boldsymbol\epsilon
=({^{0}\pi_r})^{\sssize\#}(j_{2r}\upsilon )^{\star}\boldsymbol\epsilon
$$
and  then consecutively (\ref{1}) and (\ref{A3}) together with (\ref{A1}) to arrive at
\begin{align}
\left<\pmb{\boldkey(}{{\mathsf{is}}}^{-1}_{\sssize\#}j_{r}{\frak y}\pmb{\boldkey)\,\bold{\tilde{}}},
({^{0}\pi _r})^{\sssize\#}(j_{2r}\upsilon )^{\star}\boldsymbol\epsilon\right>
&=\left<({^{0}\pi_r}^{\sssize V})_{\sssize\#}\pmb{\boldkey(}{{\mathsf{is}}}^{-1}_{\sssize\#}j_{r}
{\frak y}\pmb{\boldkey)\,\bold{\tilde{}}},(j_{2r}\upsilon )^{\star}\boldsymbol\epsilon\right> \notag\\
&=\left<\bold{\tilde{\eufb y}},(j_{2r}\upsilon )^{\star}\boldsymbol\epsilon\right>. \label{22}
\end{align}

\subsubsection*{
(\romannumeral3)
}
It remains to carry out some work upon the expression
$\left<\pmb{\boldkey(}{{\mathsf{is}}}^{-1}_{\sssize\#}j_{r}{\frak y}\pmb{\boldkey)\,\bold{\tilde{}}},
(j_{2r})^{\star}{\bold d}_{t}\boldsymbol\kappa\right>$.

\flushpar Suppose the vertical vector field ${\frak y}$ along
$\upsilon $ be extended to a vertical field ${\eufb v}$ on $Y$,
so that ${\frak y}={\eufb v}\circ \upsilon $.

As an intermediate step we first prove the following relationship:
\begin{lemma}\begin{equation}
\left<\pmb{\boldkey(}{{\mathsf{is}}}^{-1}_{\sssize\#}j_{r}{{\frak y}}\pmb{\boldkey)\,\bold{\tilde{}}},
(j_{2r}\upsilon )^{\star}{\bold d}_{t}\boldsymbol\kappa\right>
=(j_{2r}\upsilon )^{\star}\left<J_{r}({\eufb v}),
{\bold d}_{t}\boldsymbol\kappa\right>.  \label{23}
\end{equation}
\end{lemma}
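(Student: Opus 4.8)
The plan is to verify that both members of~(\ref{23}) coincide with the single expression $\left<(j_{r}\upsilon)^{-1}\bigl(J_{r}({\eufb v})\bigr),\,(j_{2r}\upsilon)^{\star}{\bold d}_{t}\boldsymbol\kappa\right>$, in which $(j_{r}\upsilon)^{-1}\bigl(J_{r}({\eufb v})\bigr)$ denotes the reciprocal image, with respect to the cross-section $j_{r}\upsilon$, of the $\pi_{r}$-vertical field $J_{r}({\eufb v})\in\Gamma\{V_{r}\}$ on $Y_{r}$.

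For the left-hand side the crucial point is the identity $\pmb{\boldkey(}{{\mathsf{is}}}^{-1}_{\sssize\#}j_{r}{\frak y}\pmb{\boldkey)\,\bold{\tilde{}}}=(j_{r}\upsilon)^{-1}\bigl(J_{r}({\eufb v})\bigr)$, which holds because ${\frak y}={\eufb v}\circ\upsilon$. To establish it I would take the (local) flow $\phi_{t}$ of ${\eufb v}$; since ${\eufb v}$ is $\pi$-vertical, each $\phi_{t}$ is base-preserving, and the family $\upsilon_{t}=\phi_{t}\circ\upsilon$ is a deformation of $\upsilon$ whose velocity at $t=0$ is exactly ${\frak y}$. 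By the prolongation rule~(\ref{11}) one has $j_{r}\upsilon_{t}=J_{r}\phi_{t}\circ j_{r}\upsilon$; differentiating at $t=0$ and using the standard fact that $\{J_{r}\phi_{t}\}$ is the flow of the prolonged field $J_{r}({\eufb v})$ on $Y_{r}$, one gets ${\bold D}(j_{r})(\upsilon)\,{\boldkey.}\,{\frak y}=J_{r}({\eufb v})\circ j_{r}\upsilon$ as morphisms along $j_{r}\upsilon$. Substituting this into the permutation formula~(\ref{15}) yields ${{\mathsf{is}}}^{-1}_{\sssize\#}j_{r}{\frak y}=J_{r}({\eufb v})\circ j_{r}\upsilon$, and applying the suspension $\pmb{\tilde{\ }}$ together with the defining relation ${{\fit g}}^{-1}{{\eufb w}}=\pmb{\boldkey(}{{\eufb w}}\circ{{\fit g}}\pmb{\boldkey)\,\bold{\tilde{}}}$ of the reciprocal image of a section gives the claimed identity. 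Hence the left-hand member of~(\ref{23}) equals $\left<(j_{r}\upsilon)^{-1}\bigl(J_{r}({\eufb v})\bigr),\,(j_{2r}\upsilon)^{\star}{\bold d}_{t}\boldsymbol\kappa\right>$.

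For the right-hand side I would first unfold the contraction on $Y_{2r}$ by Definition~\ref{Definition4}: $\left<J_{r}({\eufb v}),{\bold d}_{t}\boldsymbol\kappa\right>=\left<({^{r}\pi_{2r}})^{-1}\bigl(J_{r}({\eufb v})\bigr),{\bold d}_{t}\boldsymbol\kappa\right>$, a plain semi-basic $p$-form on $Y_{2r}$. Then the pull-back $(j_{2r}\upsilon)^{\star}$ must be carried inside the contraction. This is legitimate because $\delta^{\star}=\delta^{(\cdot,{\sssize\#})}\delta^{-1}$ with $\delta^{(\cdot,{\sssize\#})}=({{\mathsf{id}}}\otimes\wedge\delta^{\ast})_{\sssize\#}$ affecting only the $\wedge T^{\ast}$-factor, whereas the coupling $\wedge_{{\bold i}}$ contracts only the $\wedge E^{\ast}$-factor; combining this with~(\ref{3}) one gets $\delta^{\star}\left<{\eufb e},\boldsymbol\omega\right>=\left<\delta^{-1}{\eufb e},\delta^{\star}\boldsymbol\omega\right>$ for ${\eufb e}\in\Omega^{0}(B;E)$ and $\boldsymbol\omega\in\Omega(B;E^{\ast})$. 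Taking $\delta=j_{2r}\upsilon$, ${\eufb e}=({^{r}\pi_{2r}})^{-1}\bigl(J_{r}({\eufb v})\bigr)$ and $\boldsymbol\omega={\bold d}_{t}\boldsymbol\kappa$, and using that reciprocal images compose together with ${^{r}\pi_{2r}}\circ j_{2r}\upsilon=j_{r}\upsilon$, I obtain $(j_{2r}\upsilon)^{\star}\left<J_{r}({\eufb v}),{\bold d}_{t}\boldsymbol\kappa\right>=\left<(j_{r}\upsilon)^{-1}\bigl(J_{r}({\eufb v})\bigr),\,(j_{2r}\upsilon)^{\star}{\bold d}_{t}\boldsymbol\kappa\right>$, which is the same expression as for the left-hand side; this proves~(\ref{23}).

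The step that demands the most care is the identity ${{\mathsf{is}}}^{-1}_{\sssize\#}j_{r}{\frak y}=J_{r}({\eufb v})\circ j_{r}\upsilon$: it rests on reading the prolongation formula~(\ref{11}) infinitesimally and on the very construction of the isomorphism ${{\mathsf{is}}}$ in Section~\ref{Sec3}, which was designed precisely so that prolonging a deformation $\upsilon_{t}$ and then differentiating in $t$ reproduces $j_{r}$ of the $t$-derivative of $\upsilon_{t}$. Everything else is bookkeeping with the reciprocal-image functor, the only thing to watch being over which jet manifold each vector bundle is taken.
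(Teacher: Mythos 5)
Your proposal is correct and follows essentially the same route as the paper: the paper likewise reduces both sides to $\left<{j_{r}\upsilon}^{-1}\bigl(J_{r}({\eufb v})\bigr),\,(j_{2r}\upsilon)^{\sssize\#}{j_{2r}\upsilon}^{-1}{\bold d}_{t}\boldsymbol\kappa\right>$, using the identity $\pmb{\boldkey(}{{\mathsf{is}}}^{-1}_{\sssize\#}j_{r}{\frak y}\pmb{\boldkey)\,\bold{\tilde{}}}={j_{r}\upsilon}^{-1}\bigl(J_{r}({\eufb v})\bigr)$ (which it cites as formula~(\ref{A2}) and proves in Appendix~\ref{Appendix1} by exactly your flow argument) on the left, and formula~(\ref{3}) together with the composition of reciprocal images on the right. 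The only cosmetic difference is that you re-derive~(\ref{A2}) inline rather than quoting it.
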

\begin{proof}
By (\ref{A2}),
$\pmb{\boldkey(}{{\mathsf{is}}}^{-1}_{\sssize\#}j_{r}{\frak y}\pmb{\boldkey)\,\bold{\tilde{}}}
=\pmb{\boldkey(}J_{r}({\eufb v})\circ j_{r}\upsilon\pmb{\boldkey)\,\bold{\tilde{}}}
={j_{r}\upsilon }^{-1}(J_{r}({\eufb v}))$.
After the
definition of the pull-back,
$(j_{2r}\upsilon )^{\star}{\bold d}_{t}\boldsymbol\kappa
=(j_{2r}\upsilon )^{\sssize\#}{j_{2r}\upsilon }^{-1}{\bold d}_{t}\boldsymbol\kappa$.
So, on
the left-hand side of (\ref{23}) we come up to the expression
$$\left<{j_{r}\upsilon }^{-1}(J_{r}({\eufb v})),
(j_{2r}\upsilon)^{\sssize\#}{j_{2r}\upsilon }^{-1}{\bold d}_{t}\boldsymbol\kappa\right>\,.
$$
On the other hand, by (\ref{3}),
$${j_{2r}\upsilon }^{-1}\left<J_{r}({\eufb v}),{\bold d}_{t}\boldsymbol\kappa\right>
=\left<{j_{2r}\upsilon }^{-1}\,{^{r}\pi_{2r}}^{-1}\,J_{r}({\eufb v})\,,
\,{j_{2r}\upsilon }^{-1}{\bold d}_{t}\boldsymbol\kappa\right>\,.
$$
Next we apply to this the
$(j_{2r}\upsilon )^{\sssize\#}$ operation (in the only sensible way, i.e. with
respect to $Z$-variables in ${\bold d}_{t}\boldsymbol\kappa$; so it doesn't effect
the term ${j_{2r}\upsilon }^{-1}\,{^{r}\pi_{2r}}^{-1}\,J_{r}({\eufb v})$ of the
contraction) and obtain on the right-hand side of (\ref{23})
\begin{align*}
(j_{2r}\upsilon )^{\star}\left<J_{r}({\eufb v}),{\bold d}_{t}\boldsymbol\kappa\right>
&= \left<{j_{2r}\upsilon }^{-1}\,{^{r}\pi_{2r}}
^{-1}\,J_{r}({\eufb v})\,,\,(j_{2r}\upsilon )^{\sssize\#}{j_{2r}
\upsilon }^{-1}{\bold d}_{t}\boldsymbol\kappa\right> \\
&= \left<{j_{r}\upsilon }^{-1}J_{r}({\eufb v}),
(j_{2r}\upsilon )^{\sssize\#}{j_{2r}\upsilon }^{-1}{\bold d}_{t}\boldsymbol\kappa\right>.\qedhere
\end{align*}
\end{proof}

In (\ref{23}) we resort now consecutively to (\ref{A6}) and (\ref{A4}) to obtain the expected
formula,
\begin{equation}
\left<\pmb{\boldkey(}{{\mathsf{is}}}^{-1}_{\sssize\#}j_{r}{\frak y}\pmb{\boldkey)\,\bold{\tilde{}}},(j_{2r}\upsilon)^{\star}
{\bold d}_{t}\boldsymbol\kappa\right>={\bold d}({j_{2r-1}
\upsilon}^{\star}\left<J_{r-1}({\eufb v}),\boldsymbol\kappa\right>).  \label{24}
\end{equation}
\bigskip
Comparing (\ref{19}) with (\ref{18}) by means of (\ref{21}), (\ref{22}),
and (\ref{24}), and applying an analogue of (\ref{23}) with $\boldsymbol\kappa$
in place of ${\bold d}_{t}\boldsymbol\kappa$,
$$
\left<\pmb{\boldkey(}{{\mathsf{is}}}^{-1}_{\sssize\#}j_{r-1}{{\frak y}}\pmb{\boldkey)\,\bold{\tilde{}}},
(j_{2r-1}\upsilon )^{\star}\boldsymbol\kappa\right>
=(j_{2r-1}\upsilon )^{\star}\left<J_{r-1}({\eufb v}),
\boldsymbol\kappa\right>,
$$
the Reader
easily convinces Himself that
{\it
under the identification $j_{s}\frak y \overset{\tsize\sim}\leftrightarrow
\pmb{\boldkey(}{{\mathsf{is}}}^{-1}_{\sssize\#}j_{s}{{\frak y}}\pmb{\boldkey)\,\bold{\tilde{}}}\in j_s\upsilon^{-1}V_s$
in accordance with the isomorphism ${\mathsf{is}}^{-1}\:J_{s}(V_{\sssize Z}Y)\overset{\kern-2pt\tsize\approx}\to V(Y_{s})$\/}
the following assertion is true:

\begin{prop}
Let $\boldsymbol\lambda $ be an $r^{\text{th}}$-order Lagrangian for a
nonlinear field $\upsilon\in\Gamma\{Y\rightarrow Z\}$.
The variational derivative of the Action density
${\check{\lambda}}(\upsilon)=(j_{r}\upsilon )^{\star}\boldsymbol\lambda $
at $\upsilon $ is an
$r^{\text{th}}$-order differential operator
${\bold D}{\check{\lambda}}(\upsilon )$
in the space $\Gamma _{\upsilon }\{V_{\sssize Z}Y\}$  of the
variations of the field $\upsilon $. Let
$^{t}\bigl({\bold D}{\check{\lambda}}(\upsilon )\bigr)$
denote the transpose operator and let ${\bold G}$
denote the Green operator for ${\bold D}{\check{\lambda}}(\upsilon )$.
Then there exist semi-basic differential forms
$\boldsymbol\epsilon$ on $J_{2r}(Y)$ and
$\boldsymbol\kappa$ on $J_{2r-1}(Y)$ which take values in vector bundles
$(VY)^{\ast}$
and $\bigl(J_{r-1}(V_{\sssize Z}Y)\bigr)^{{\ast }}$ respectively (as
in Proposition~\ref{Proposition 1}) such that
\begin{align*}
({\bold D}{\check{\lambda}})(\upsilon )({\frak y})
&=\left<j_{r}{\frak y},
(j_{r}\upsilon )^{\star}{\bold d}_{\pi }\boldsymbol\lambda \right>; \\
^{t}\bigl({\bold D}{\check{\lambda}}(\upsilon )\bigr)\msp3(1)
&=(j_{2r}\upsilon )^{\star}\boldsymbol\epsilon\;; \\
{\bold G({\frak y})}\msp2(1)
&=\left<j_{r-1}{{\frak y}},(j_{2r-1}\upsilon )^{\star}\boldsymbol\kappa\right>.
\end{align*}

Whereas the Green operator is defined up to a ${\bold d}$-closed
term,
the differential form~$\boldsymbol\kappa$ is defined up to a
${\bold d}_{t}$-closed term, and, by Remark\ms\ref{d_tExactness}, may even be specified up to a
$d_t$-exact term.

The differential form $\boldsymbol\epsilon$ is defined uniquely and the
Euler-Lagrange equations arise as a local expression of the exterior
differential equation
\begin{equation}
\boxed{(j_{2r}\upsilon )^{\star}\boldsymbol\epsilon=0
}\label{25}
\end{equation}
\end{prop}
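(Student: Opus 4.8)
The plan is to obtain the three displayed identities by pulling the Kol\'a\v r decomposition (\ref{18}) back along $j_{2r}\upsilon$, contracting it with a variation $\frak y$, and matching the result term by term against the Nonlinear Green Formula (\ref{19}); the Euler--Lagrange equation (\ref{25}) is then the standard consequence. First I would observe that the first formula, $({\bold D}{\check\lambda})(\upsilon)(\frak y)=\left<j_{r}\frak y,(j_{r}\upsilon)^{\star}{\bold d}_{\pi}\boldsymbol\lambda\right>$, is nothing but (\ref{21}) (equivalently (\ref{16})) rewritten under the standing identification $j_{s}\frak y\overset{\tsize\sim}\leftrightarrow\pmb{\boldkey(}{{\mathsf{is}}}^{-1}_{\sssize\#}j_{s}{{\frak y}}\pmb{\boldkey)\,\bold{\tilde{}}}$; since only $j_{r}\frak y$ enters, this already exhibits ${\bold D}{\check\lambda}(\upsilon)$ as an $r^{\text{th}}$-order operator in $\frak y$, and under $(\wedge\!^{p}T^{\ast}Z)^{\ast}\otimes\wedge\!^{p}T^{\ast}Z\approx{\Bbb R}_{\sssize Z}$ its value is $\left<({\bold D}{\check\lambda})(\upsilon)(\frak y),1\right>$. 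Applying $(j_{2r}\upsilon)^{\star}$ to (\ref{18}) gives (\ref{20}); contracting with $\pmb{\boldkey(}{{\mathsf{is}}}^{-1}_{\sssize\#}j_{r}{{\frak y}}\pmb{\boldkey)\,\bold{\tilde{}}}$ and using (\ref{21}) on the left, everything reduces to treating the two addends on the right of (\ref{20}) separately.

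For the first addend I would invoke the chain (\ref{17}), (\ref{1}) and the appendix identities, which produced (\ref{22}): it equals $\left<\bold{\tilde{\eufb y}},(j_{2r}\upsilon)^{\star}\boldsymbol\epsilon\right>=\left<\frak y,(j_{2r}\upsilon)^{\star}\boldsymbol\epsilon\right>$. Comparing with the type of the transpose operator recalled before (\ref{19}) this forces ${}^{t}\bigl({\bold D}{\check\lambda}(\upsilon)\bigr)(1)=(j_{2r}\upsilon)^{\star}\boldsymbol\epsilon$.

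For the second addend I would extend $\frak y$ to a vertical field ${\eufb v}$ on $Y$; the Lemma together with (\ref{24}) rewrites it as ${\bold d}\bigl({j_{2r-1}\upsilon}^{\star}\left<J_{r-1}({\eufb v}),\boldsymbol\kappa\right>\bigr)$, and the displayed analogue of (\ref{23}) (with $\boldsymbol\kappa$ in place of ${\bold d}_{t}\boldsymbol\kappa$) turns this into ${\bold d}\bigl(\left<\pmb{\boldkey(}{{\mathsf{is}}}^{-1}_{\sssize\#}j_{r-1}{{\frak y}}\pmb{\boldkey)\,\bold{\tilde{}}},(j_{2r-1}\upsilon)^{\star}\boldsymbol\kappa\right>\bigr)$, i.e. ${\bold d}\bigl(\left<j_{r-1}\frak y,(j_{2r-1}\upsilon)^{\star}\boldsymbol\kappa\right>\bigr)$ after the identification. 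Matching the identity so assembled against (\ref{19}) yields the three displayed formulae, with ${\bold G}(\frak y)(1)=\left<j_{r-1}\frak y,(j_{2r-1}\upsilon)^{\star}\boldsymbol\kappa\right>$. The uniqueness claims are then inherited: $\boldsymbol\epsilon$ is unique by Proposition~\ref{Proposition 1}, its order $(2r,0)$ being fixed; $\boldsymbol\kappa$ is determined up to a ${\bold d}_{t}$-closed term, sharpened to a ${\bold d}_{t}$-exact term by Remark~\ref{d_tExactness}; and correspondingly ${\bold G}$ is determined only up to a ${\bold d}$-closed term.

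Finally, $({\bold D}S)(\upsilon)=0$ means $\int_{Z}\left<({\bold D}{\check\lambda})(\upsilon)(\frak y),1\right>=0$ for every variation $\frak y$; integrating the Green identity just obtained and dropping the exact term by Stokes' theorem leaves $\int_{Z}\left<\frak y,(j_{2r}\upsilon)^{\star}\boldsymbol\epsilon\right>=0$ for all $\frak y\in\Gamma_{\upsilon}\{V_{\sssize Z}Y\}$, whence $(j_{2r}\upsilon)^{\star}\boldsymbol\epsilon=0$ by the fundamental lemma of the calculus of variations (applied fibrewise with bump sections), which is (\ref{25}). The only genuinely delicate point, and the one I expect to be the main obstacle, is the bookkeeping around the auxiliary extension ${\eufb v}$ used in the third step: one must check that the Lemma, (\ref{24}) and the analogue of (\ref{23}) deliver expressions depending on ${\eufb v}$ only through $j_{2r-1}\upsilon$ and $j_{r-1}\frak y$ — as the construction of ${{\mathsf{is}}}$ and of $J_{r-1}({\eufb v})$ is designed to guarantee — so that the global identification $j_{s}\frak y\leftrightarrow\pmb{\boldkey(}{{\mathsf{is}}}^{-1}_{\sssize\#}j_{s}{{\frak y}}\pmb{\boldkey)\,\bold{\tilde{}}}$ may legitimately be used uniformly throughout and the statement is independent of all choices made.
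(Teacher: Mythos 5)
Your proposal reproduces the paper's own argument: the paper proves this proposition exactly by applying $(j_{2r}\upsilon)^{\star}$ to the Kol\'a\v r decomposition (\ref{18}) to get (\ref{20}), identifying the left side with $({\bold D}{\check\lambda})(\upsilon)({\frak y})$ via (\ref{21}), reducing the first addend to $\left<\bold{\tilde{\eufb y}},(j_{2r}\upsilon)^{\star}\boldsymbol\epsilon\right>$ through (\ref{17}), (\ref{1}), (\ref{A3}), (\ref{A1}) as in (\ref{22}), reducing the second addend to ${\bold d}\bigl({j_{2r-1}\upsilon}^{\star}\left<J_{r-1}({\eufb v}),\boldsymbol\kappa\right>\bigr)$ through the Lemma, (\ref{A6}), (\ref{A4}) and the analogue of (\ref{23}), and then matching term by term against the Green formula (\ref{19}), with the uniqueness statements inherited from Proposition~\ref{Proposition 1} and Remark\ms\ref{d_tExactness}. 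The steps, the auxiliary extension ${\eufb v}$, and the identification $j_{s}\frak y\overset{\tsize\sim}\leftrightarrow\pmb{\boldkey(}{{\mathsf{is}}}^{-1}_{\sssize\#}j_{s}{{\frak y}}\pmb{\boldkey)\,\bold{\tilde{}}}$ are all the same, so the proposal is correct and essentially identical to the paper's proof.
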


\bigskip
\subsection*{Discussion}
One would wish to introduce some intrinsically defined operator ${\bold E}$
to give an explicit expression to the Euler-Lagrange form $\boldsymbol\epsilon$
by means of
$$\boldsymbol\epsilon={\bold E}(\boldsymbol\lambda)\,.
$$
Considerable efforts were made, mainly by {\smc tulczyjew}~\cite{TulczyjewResolution}
and {\smc kol\'a\v r}~\cite{Kolar1977} in this direction which amount to defining
the operator ${\bold E}$ in terms of
some order-reducing derivations ${\boldsymbol\imath}^i$ of
degree $0$, acting in the exterior algebra of fibre differential forms
over the $r^{\text th}$-order prolongation manifold $Y_r$. These derivations
act as trivial ones in the ring of functions ${\frak F}_{r}$ over the manifold
$Y_r$ and are defined by prescribing their action upon one-forms in a local
chart $(\xi ^{i};\psi ^{a}_{\sssize\text{N}})$ as follows
$${\boldsymbol\imath}^i\bold d \psi ^{a}_{\sssize\text N}=
\begin{cases}
 \bold d \psi ^{a}_{{\sssize\text N}-\sssize1_i}, & \text{if $\text{\smc n}\doteqdot
(\nu_{1},\dots,\nu_{p})\geq 1_{i}$} \\
0, & \text{otherwise}\;.
\end{cases}
$$
We recall also the local expressions of partial total derivatives
${\bold D}_{i}={\bold D}_{t}(\partial /\partial \xi ^{i})$ (see Appendix~\ref{Appendix2}),
${\bold D}_{i}=\partial /\partial \xi ^{i}
+\psi ^{a}_{{\sssize\text{N}}+1_{i}}\partial/\partial \psi ^{a}_{\sssize\text{N}}$.
Let $\deg (\boldsymbol\varphi)=\text{degree of the fibre differential form $\boldsymbol\varphi$}$.
Neither ${\bold D}_{i}$ nor ${\boldsymbol\imath}^i$ have any intrinsic meaning,
but has the operator
\begin{equation}
\bold E=\deg\circ\,\bold d_\pi
+\sum_{\|{\sssize\text N}\|>0}
\frac {(-1)^{\|{\sssize\text N}\|}}{\text{\smc n}!}\, \bold
D_{\sssize\text N}{\boldsymbol\imath}^{\sssize\text N}\bold d_{\pi}\;.
\label{26}
\end{equation}
As far as this operator $\bold E$, defined initially in
$\Omega^{0}_{r}(Z;\wedge V^{\ast}_{r})$, acts trivially upon the subring $\frak F_{\sssize Z}$
of the ring $\frak F_{r}$, its action can be extended to the whole of
$\Omega_{r}(Z;\wedge V^{\ast}_{r})$  remaining trivial over the
subalgebra $\Omega(Z)$ of the algebra $\Omega_{r}(Z;\wedge V^{\ast}_{r})$.
Indeed, for a {\it parallelizable} $Z$ (which {\it locally}  is always true)
in course of considerations, similar to those of Paragraph\ms\ref{App2Par2TotalDiff} of Appendix~\ref{Appendix2}, we can
profit by {\it local} isomorphism
\,$\Omega_{r}(Z;\wedge V^{\ast}_{r})\approx
\Omega^{0}_{r}(Z;\wedge V^{\ast}_{r}) \otimes_{\frak F_{_Z}} \Omega(Z)$\,
to define
$$
\bold E(\boldsymbol\varphi\otimes\boldsymbol\mu)=\bold E(\boldsymbol\varphi)\otimes\boldsymbol\mu\,,
$$
whenever  $\boldsymbol\varphi\in \Omega^{0}(Z;\wedge V^{\ast}_{r})$ and $\boldsymbol\mu\in \Omega(Z)$.

Comparing $\bold E(\boldsymbol\lambda)$ in (\ref{26}) with (\ref{18}), it becomes
evident that the definition of the operator $\bold E$ amounts to the choice
of the form $\boldsymbol\kappa$. Since there is no natural way to make such choice
intrinsic, this seems to be the reason why the efforts to explicitly present a
consistently
intrinsic definition of the Euler-Lagrange operator $\bold E$ failed as far.
Of course, the problem melts down when passing to some quotient spaces of
differential forms. Immense development took place in that direction at the
level of cohomologies of bigraded complexes with the theory becoming still more
abstract and still more deviating from the original Euler-Lagrange expression.

We wish to emphasize, that it was due to the existence of the projection
$Y\to Z$ that the global splitting of variables into dependent and independent ones
became possible, which, in turn, led to the natural interpretation of the
term $\boldsymbol\epsilon$ in (\ref{18}) as a {\it semi-basic\/} differential form
{\it with values in a vector bundle\/}. In a more general framework,~--- that
of a contact
manifold in place of the jet prolongation $Y_{r}$ of a global surmersion $Y\to Z$,~---
such interpretation would never be possible. Even the Lagrangian itself could not be globally
presented as a semi-basic form with respect to independent variables, and thus
could not be specified in a canonical way~\cite{Krupka1973}. In practical computations,
however, one makes use of the local isomorphism between an $r^{\text{th}}$-order
contact manifold  $C^{p}_{r}(M)$ and the jet bundle $J_{r}(\Bbb R^{p};\Bbb R^{q})$,
$p+q=\dim M$, so even in this case it is possible to profit by the advantages
of the representation (\ref{25}). We now pass to the discussion of these advantages.

First, we see that the representation (\ref{25}) is natural. Indeed, the form
$\boldsymbol\epsilon$ originated from a Lagrangian, which ought to have been integrated
over the base manifold, hence ought to have manifested itself as a {\it differential
$p$-form\/}. But the variation was undertaken with respect to a vertical field
$\frak y$ and that vertical field enters as an argument to the linear transformation,
associated with
$\boldsymbol\epsilon$, which has nothing to do with the properties of $\boldsymbol\epsilon$
as a semi-basic $p$-form. So, $\boldsymbol\epsilon$ must manifest itself as a {\it vector
valued\/} $p$-form (more precisely~--- with values in the dual to the vector bundle
of infinitesimal variations).

Now, the expression (\ref{25}) is deprived of any other inessential parameters
which may have appeared during the process of variation. In particular,
no trace of any auxiliary vertical vector field (as in~\cite{Krupka1973})
remains. This allows representation of the solutions of the Euler-Lagrange
equations in the form of the integral manifolds of an exterior vector valued
differential system. Once recognized, such approach suggests
the framework of linear algebra: first, in investigating the symmetries
of the Euler-Lagrange equations, second, in solving the inverse problem of variational
calculus. In both cases the method consists in transforming the problem of
equivalence of two systems of differential equations,
one of them generated by the left-hand side of (\ref{25}),
into an algebraic problem of the equivalence of the
corresponding modules of vector differential forms using the Lagrange multiplies.
To apply this approach consistently, in the case of studying symmetries, one needs to generalize the notion of
the Lie derivative of a differential form to a derivative of a vector bundle
valued differential form (see \cites{Kolar1977,Munoz-Masque1985,Theses,Methods,SymVectorForms}).

\begin{appendix}
\section{Graded structure of vertical tangent bundles and prolongation of fiber transformations\label{Appendix1}}
\fancyhead[CE,CO]{\slshape APPENDIX~\thesection: PROLONGATION OF VERTICAL TANGENT BUNDLES}
\subsection{}
Within the notations of Section\ms\ref{Sec3} (Paragraph\ms\ref{Sec3Par4Permutation}), let
${^{r}{{\mathsf{pr}}}_s}:J_{s}(V_{\sssize Z})\rightarrow J_{r}(V_{\sssize Z})$
denote the projection
$j_{s}\frak y(z)\mapsto j_{r}\frak y(z)$.
Applying $J_{r}(\tau_{\sssize Y})$ to the target of the application
$^{r}{{\mathsf{pr}}}_s$, we get: $J_{r}(\tau_{\sssize
Y})\circ{^{r}{{\mathsf{pr}}}_s}~\bigl(j_{s}\frak y(z)\bigr) =(J_{r}\tau_{\sssize
Y})\bigl(j_{r}\frak y(z)\bigr) =j_{r}\upsilon (z)$. On the
other hand, ${^{r}\pi _s}\circ J_{s}(\tau_{\sssize
Y})~\bigl(j_{s}\frak y(z)\bigr) ={^{r}\pi _s}\bigl(j_{s}(\tau_{\sssize Y}\circ \frak y)(z)\bigr) ={^{r}\pi _s}\bigl(j_{s}\upsilon (z)\bigr)
=j_{r}\upsilon (z)$;  hence
${^{r}{{\mathsf{pr}}}_s}$ is fibred over $^{r}\pi _{s}$. As soon as
${^{r}{{\mathsf{pr}}}_s}$  is so fibred, it acts upon the cross-sections of the
reciprocal image ${j_{s}\upsilon }^{-1}J_{s}(V_{\sssize Z})$  of the fibred
manifold (in fact a vector bundle)
$J_{s}(\tau_{\sssize Y}):J_{s}(V_{\sssize Z})\rightarrow Y_{s}$.
After the general definitions, if we denote by
$\pmb{\boldkey(}j_{s}\frak y\pmb{\boldkey)\,\bold{\tilde{}}}$
the cross-section corresponding to the morphism
$j_{s}\frak y$  along
$j_{s}\upsilon $, and  by $({^{r}{{\mathsf{pr}}}_s})_{\sssize Y_{s}}$  the reduction of
${^{r}{{\mathsf{pr}}}_s}$  to the base $Y_{s}$, then
$({^{r}{{\mathsf{pr}}}_s})_{\sssize\#}\pmb{\boldkey(}j_{s}\frak y\pmb{\boldkey)\,\bold{\tilde{}}}
\in \Gamma \{j_{r}\upsilon ^{-1}J_{r}(V_{\sssize Z})\}$,
and
$({^{r}{{\mathsf{pr}}}_s})_{\sssize\#}\pmb{\boldkey(}j_{s}\frak y\pmb{\boldkey)\,\bold{\tilde{}}}
\doteqdot (j_{s}\upsilon
^{-1}({^{r}{{\mathsf{pr}}}_s})_{\sssize Y_{s}})\,\circ\,\pmb{\boldkey(}j_{s}\frak y\pmb{\boldkey)\,\bold{\tilde{}}}
=\pmb{\boldkey(}{^{r}{{\mathsf{pr}}}_s}\circ j_{s}\frak y\pmb{\boldkey)\,\bold{\tilde{}}}
=\pmb{\boldkey(}j_{r}\frak y\pmb{\boldkey)\,\bold{\tilde{}}}$. If abandon the tilde in the
superscripts, the projection $({^{r}{{\mathsf{pr}}}_s})_{\sssize\#}$ will obtain a
slightly different meaning as one acting from $\Gamma
\{J_{s}(V_{\sssize Z})\}$ into $\Gamma \{J_{r}(V_{\sssize Z})\}$ ``over $Z$'',
\begin{equation}
\boxed{({^{r}{{\mathsf{pr}}}_s})_{\sssize\#}(j_{s}{\frak y})=j_{r}{\frak y}
}\label{A1}
\end{equation}
\subsection{}
The isomorphism between the manifolds $V_s$ and $J_{s}(V_{\sssize Z})$
allows us to write down a useful
relationship between the jet of a restricted vertical vector field
${\eufb v}$, viewed as a cross-section of $V_{\sssize Z}$, and the prolongation
$J_{r}({\eufb v})$ of this field, obtained by prolonging its
one-parametric local group
as follows.

Consider a field ${\eufb v}\in \frak V$ of vertical tangent vectors,
generated by its local group $e^{t{\eufb v}}$. The
$r^{\text{th}}$-order prolongation $J_{r}({\eufb v})$  of ${\eufb v}$ is the
vector field generated by the local group $J_{r}(e^{t{\eufb v}}):
y_{r}\mapsto j_{r}(e^{t{\eufb v}}\circ \upsilon )(z)$, if
$y_{r}=j_{r}\upsilon (z)$. Consider thereto a restriction  ${\eufb v}\circ \upsilon $
of the vector field ${\eufb v}$ to some submanifold
$\upsilon (Z)$  of $Y$ ; it is an element $\frak y$ from $\Gamma_{\upsilon }\{V_{\sssize Z}\}$
(see Fig.\mms\ref{Figure7}):
\begin{figure}[h]
\def\ssz{\scriptscriptstyle}
\def\sz{\scriptstyle}
$$\xy
\xymatrix{
&\vrule width5em  height0ex
&J_r(V_{\ssz Z})
\ar[rr]^-{{\mathsf{is}}^{-1}}
\ar[drrr]|{\object+{\sz ^0{\mathsf{pr}}_r}}
&&V_r
\ar[dr]^-{V(^0\!\pi_r)}\\
&&&&&V\\
&&&\save\POS+<2em,-2ex>\drop+{Y_r}
    \ar[uul];_{J_r(\tau_{_Y})}|<>(0.763){\object+{}}
    \ar@{{}.>}|<>(0.327){\object+{}}|<>(.54){\object+{\sz J_r\eufb v}}|<>(0.744){\object+{}}[uur]
    \ar[drr]^{^0\!\pi_r}
    \POS="comment"\restore\\
Z
\ar@{{}.>}|{\object+{\sz j_r\frak y}}[uuurr]
\ar[uurrrrr]|{\object+{\sz \frak y=\eufb v\circ\upsilon}}
\ar@{{}.>}|{\object+{\sz j_r\upsilon}}"comment"
\ar@{{}.>}[rrrrr]|{\object+{\sz \upsilon}}
&&&&&Y
\ar@{{}.>}|{\object+{\sz\eufb v}}[uu]
}\endxy$$
\caption{}
\label{Figure7}
\end{figure}

Under the application  ${{\mathsf{is}}}^{-1}$ the point
$j_{r}({\eufb v}\circ \upsilon )(z)\in J_{r}(V_{\sssize Z})$
 transforms into the vertical tangent vector
$\bigl(j_{r}(e^{t{\eufb v}}\circ \upsilon )(z)\bigr)^\prime $,
which is nothing but exactly the value of the vertical field
$J_{r}({\eufb v})$  at the point $y_{r}$.  We conclude thereof that
the following formula (employed in~(\ref{23})) holds:
\begin{equation}
{{{\mathsf{is}}}}^{-1}\circ \,j_{r}({\eufb v}\circ \upsilon )
=J_{r}({\eufb v})\circ j_{r}\upsilon .
\label{A2}
\end{equation}
This relationship may be viewed as an alternative definition either of
$J_{r}({\eufb v})$  or of \ ${{\mathsf{is}}}$\,, \;as it is evidently
clear from Fig.\mms\ref{Figure7} again.
\subsection{}
Let $V(^{r}\pi _{s})$  denote the restriction of the tangent mapping
$T(^{r}\pi _{s})$  to the bundle $V_{s}$ of vertical  vectors tangent
to the fibred manifold $Y_{s}\rightarrow Z$. The mapping  \ ${{\mathsf{is}}}$ \ is
graded with respect to the pair of mappings, $V(^{r}\pi _s)$ and
$^{r}{{{\mathsf{pr}}}}_s$, over $^{r}\pi _{s}$ (see Fig.\mms\ref{Figure8})\,:

\begin{figure}[h]
\def\sz{\scriptstyle}
\def\s{\scriptscriptstyle}
$$\xy\xymatrix{
V(Y_s)
\ar|{\object+{\sz V(^r\!\pi_s)}}[rr]
\ar^{{\mathsf{is}}}[dr]
\ar_{\tau_s}[dd]
&&V(Y_r)
\ar|{\object+{}}[dd]
\ar^{{\mathsf{is}}}[dr]\\
&J_s(V_{\s Z})
\ar|<>(.24){\object+{\sz ^r{\mathsf{pr}}_s}}[rr]
\ar^{J_s(\tau_{_Y})}[dl]
&&J_r(V_{\s Z})
\ar^{J_r(\tau_{_Y})}[dl]\\
Y_s
\ar|{\object+{\sz^r\!\pi_s}}[rr]
&&Y_r
}\endxy$$
\caption{}
\label{Figure8}
\end{figure}
\begin{proof}
Under the tangent mapping $T(^{r}\pi _s)$ the vertical vector
${\sigma _{y_{s}}}^\prime $ projects onto the vector
${\sigma _{y_{r}}}^\prime $, tangent to the curve
$\sigma _{y_{r}}:t\mapsto j_{r}\upsilon_{t}(z_{0})$
at the point
$y_{r}={^{r}\pi _s}(y_{s})=j_{r}\upsilon (z_{0})$,
and that vector is identified
with the jet $j_{r}\frak y(z_{0})$  which is of course the image of
\;${{\mathsf{is}}}\,({\sigma _{y_{s}}}^\prime )$ \;under the projection
${^{r}{{{\mathsf{pr}}}}_s}$.
\end{proof}

Again, if we accept for a moment the
slight difference between $\Gamma _{\upsilon _{r}}\{J_{r}(V_{\sssize Z})\}$
and
\break
$\Gamma \{{\upsilon_r}^{-1}J_{r}(V_{\sssize Z})\}$, the mapping
${{\mathsf{is}}}^{-1}_{\sssize\#}$  will appear to act upon every
$\pmb{\boldkey(}j_{r}\frak y\pmb{\boldkey)\,\bold{\tilde{}}}\in \Gamma \{j_{r}\upsilon ^{-1}J_{r}(V_{\sssize Z})\}$.
Let $(^{r}\pi _{s})^{\sssize V}$  stand
for the reduction of $V(^{r}\pi _{s})$  to the base $Y_{s}$ by means
of the reciprocal image functor ${^{r}\pi _{s}}^{-1}$. According to
the general philosophy, we denote by $({^{r}\pi_s}\,^{\sssize
V})_{\sssize\#}$ its action upon the cross-sections of the bundle
${\upsilon_s}^{-1}V_{s}$ consisting in composing them with
${\upsilon_s}^{-1}({^{r}\pi _{s}}^{\sssize V})$,
$$ \boxed{({^{r}\pi _s}^{\sssize
V})_{\sssize\#} :\Gamma \{{\upsilon_s}^{-1}(V_{s})\}\rightarrow \Gamma
\{({^{r}\pi _s}\circ \upsilon _{s})^{-1}(V_{r})\} }
$$
If $\pmb{\widetilde{\frak y_s}}\in \Gamma
\{{\upsilon_s}^{-1}(V_{s})\}$ then $({^{r}\pi _s}^{\sssize
V})_{\sssize\#}\pmb{\widetilde{\frak y_s}} =\pmb{\boldkey(}{^{r}\pi _s}^{\sssize
V}\circ \frak y_{s}\pmb{\boldkey)\,\bold{\tilde{}}} =\pmb{\boldkey(}V(^{r}\pi _s)\circ \frak y_{s}\pmb{\boldkey)\,\bold{\tilde{}}}$.
Because of $V(^{r}\pi _{s})\circ
{{\mathsf{is}}}^{-1}={{\mathsf{is}}}^{-1}\circ {^{r}{{{\mathsf{pr}}}}_s}$  we
have
\begin{equation}
\boxed{({^{r}\pi _s}^{\sssize
V})_{\sssize\#}{{{\mathsf{is}}}}^{-1}_{\sssize\#}={{{\mathsf{is}}}}^{-1}_{\sssize\#}({^{r}{{{\mathsf{pr}}}}_s})_{\sssize\#}
}\label{A3}
\end{equation}

\section{Derivations on jet bundles\label{Appendix2}}
\fancyhead[CE,CO]{\iffloatpage{\slshape ILLUSTRATIONS}{\slshape APPENDIX~\thesection: DERIVATIONS ON JET BUNDLES}}
\flushpar In this Appendix we recall some very few preliminary properties of differentiation
technique in the graded modules over fibred manifolds for the sake of comprehension
and also to support several references encountered here and there in the text.
An interested Reader may appreciate at least three equivalent but conceptually
differing  definitions of the operator of total differential each revealing a
separate property quoted elsewhere and still all three intrinsic.

\subsection{The fibre differential\label{App2Par1FibreDiff}}
Let as usual some vector bundle $F$ be fibred over the base $Z$ by means of
the projection $\zeta $, and consider a manifold $B$, fibred over $Z$
by means of the surmersion $\pi $. In Section\ms\ref{Sec2} (formula~(\ref{7})) we have already
defined the fiber differential ${\bold d}_{\pi }\boldsymbol\beta \in \Gamma
\{(VB)^{{\ast }}\otimes \pi ^{-1}F\}$ of a cross-section $\boldsymbol\beta
\in \Gamma \{\pi ^{-1}F\}$. The Lie algebra ${\frak V}_{\sssize B}$ of
vertical vector fields on $B$ is a subalgebra in ${\frak X}_{\sssize B}$ and
hence acts as an algebra of derivations of the ring ${\frak F}_{\sssize B}$.
For a vertical vector field ${\eufb v}\in {\frak V}_{\sssize B}$  and a
function $f\in {\frak F}_{\sssize B}$  it is obvious that (cf.~the exact sequence (\ref{4}))
$$
\boxed{\left<{\eufb v},{\bold d}_{\pi }f\right>
=\left<\iota _{\sssize\#}{\eufb v},{\bold d}f\right>
}$$
For some fixed ${\eufb v}$ define an ${\Bbb R}$-endomorphism
${\bold D}_{\pi }({\eufb v})$  of the module $\Gamma \{\pi ^{-1}F\}$
by the rule
${\bold D}_{\pi }({\eufb v})\boldsymbol\beta
\doteqdot\left<{\eufb v},{\bold d}_{\pi }\boldsymbol\beta \right>$.
The map
${\bold D}_{\pi }:{\eufb v}\mapsto {\bold D}_{\pi }({\eufb v})$
is in fact a homomorphism
of modules over ${\frak F}_{\sssize B}$. It has the crucial property of
$$
{\bold D}_{\pi }({\eufb v})(f\cdot \boldsymbol\beta )
=\left<\iota _{\sssize\#}{\eufb v},
{\bold d}f\right>+f\cdot {\bold D}_{\pi }({\eufb v})\boldsymbol\beta
$$
and thus may be called the law of derivation of the
elements of the ${\frak F}_{\sssize B}$-module $\Gamma \{\pi ^{-1}F\}$  in
the direction of the elements of the algebra ${\frak V}_{\sssize B}$.
Exploiting this derivation law the differential ${\bold d}_{\pi }$ is
being extended to an exterior differentiation of the graded
${\frak F}_{\sssize B}$-module  ${\bold A} \bigl({\frak V}_{\sssize B};\Gamma \{\pi ^{-1}F\}\bigr)$
of exterior forms on ${\frak V}_{\sssize B}$ with values in $\Gamma \{\pi
^{-1}F\}$ by means of the commonly known rule~\cite{Koszul}
\begin{align*}
({\bold d}_{\pi }\boldsymbol\omega )({\eufb v}_{1},\ldots,{\eufb v}_{d+1})
&=\sum ^{d+1}_{{\sssize L}=1}(-1)^{{\sssize L}+1}{\bold D}_{\pi }({\eufb v}_{{\sssize L}})
\boldsymbol\omega ({\eufb v}_{1}, \ldots ,\widehat{{\eufb v}_{\sssize L}},\ldots,
{\eufb v}_{d+1}) \\
&+\sum _{{\sssize L}<{\sssize K}}(-1)^{{\sssize L}+{\sssize
K}}\boldsymbol\omega ([{\eufb v}_{{\sssize L}},
{\eufb v}_{{\sssize K}}],{\eufb v}_{1},\ldots , \widehat{{\eufb v}_{\sssize
L}},\ldots,\widehat{{\eufb v}_{\sssize K}},\ldots ,{\eufb v}_{d+1}).
\end{align*}

To obtain itself the actual operator we shall call the
fibre differential in the mo\-dule $\Omega_{\sssize B}(Z;\wedge {VB}^{\ast})$  set $F=\wedge T^{{\ast }}Z$
and employ the identification
${\bold A} \bigl({\frak V}_{\sssize B};\Gamma \{\pi ^{-1}F\}\bigr)\approx \Gamma
\{\wedge {VB}^{\ast}\otimes \pi ^{-1}F\}$.  Finally by
replacing $B$ with $Y_{r}$ one gets the initially desired operator
$$
\boxed{{\bold d}_{\pi }:\Omega ^{d}_{r}(Z;\wedge\!^{l}{V^{\ast}_r})\rightarrow \Omega ^{d}_{r}(Z;\wedge
^{l+1}{V^{\ast}_r})
}$$

\subsection{The total differential\label{App2Par2TotalDiff}}
A procedure, similar to that of the preceding Paragraph, will now be followed to define the operator of
the total dif\-fe\-ren\-tial
$$
\boxed{{\bold d}_{t}:\Omega ^{d}_{r}\bigl(Z;\wedge\!^{l}(V^{{\ast
}}_{r})\bigr)\rightarrow \Omega ^{d+1}_{r+1}\bigl(Z;\wedge\!^{l}(V^{{\ast
}}_{r+1})\bigr) }
$$

First a function $f\in {\frak F}_{r}$ will be treated
as one defining the fiber bundle homomorphism $(\pi
_{r},f):Y_{r}\rightarrow Z\times {\Bbb R}$  over the base $Z$
and we shall restrict the first-order prolongation (\ref{11}) of it,
$J_{1}(\pi _{r},f)$, to the manifold of holonomic jets $Y_{r+1}$
(see Fig.\mms\ref{Figure9} below).
\begin{figure}[h]
\def\subtld#1{\vphantom{_{\scriptscriptstyle\#}}\botsmash{#1{\!}_{_{_{\textstyle\char"7E}}}}}
$$\xy\xymatrix{
&T^\ast Z
\ar|{\object+{}}[dd]\\
Y_{r+1}
\ar@{^(->}[]+D-<0pt,1\jot>;[dd]
\ar^{\subtld{(\bold d_tf)}}[ur]
\ar[rr]
&&T^\ast Z\times\Bbb R\approx J_1(Z;\Bbb R)
\ar_{\text{pr}_1}[ul]
\ar@{{}={}}|{\object+{\approx}}[]!<2em,0em>;[dd]!<2em,0em>\\
&Z\\
J_1(Y_r)
\ar[ur]
\ar^-{J_1(\pi_r,f)}[rr]
&&(Z\times\Bbb R)_1\overset{\text{def}}=J_1(Z\times\Bbb R)
\ar[ul]
}\endxy$$
\caption{}
\label{Figure9}
\end{figure}
Then, the identification $(Z\times {\Bbb R})_{1}\approx
J_{1}(Z;{\Bbb R})\approx T^{{\ast }}Z\times {\Bbb R}$  will be
employed, where the cotangent bundle stands for the space of the
first-order jets $J_{1}(Z;{\Bbb R}_{0})$ with the target at $0\in
{\Bbb R}$. As the last step we apply the projection onto the first
factor and denote the entire construction by $f_{1}$.  Namely, if
$y_{r+1}= j_{r+1}\upsilon (z)\in Y_{r+1}$ and $y_{r}= {^{r}\pi
_{r+1}}(y_{r+1})$, then $f_{1}\bigl(j_{r+1}\upsilon (z)\bigr)= j_{1}\bigl(f\circ
j_{r}\upsilon -f(y_{r})\bigr)(z)$,  which is identified with the
differential form $({\bold d}_{t}f)(y_{r+1})$, such that
$({\bold d}_{t}f)(y_{r+1}){\boldkey.}\,{\eurb u}
={\bold d}(f\circ j_{r}\upsilon
)(z){\,\boldkey.}\,{\eurb u}$ whenever $\eurb u\in T_{z}Z$.
We denote by
${\bold d}_{t}f$ that cross-section of the vector bundle ${\pi_{r+1}}
^{-1}T^{{\ast }}Z$,  the corresponding $\pi _{r+1}$-morphism
$({\bold d}_{t}f)_{_{\dsize\char"7E}}$   of which coincides with
$f_{1}$ (see Fig.\mms\ref{Figure9} again),
$$
\boxed{\botsmash{({\bold d}_{t}f)_{_{_{_{\dsize\char"7E}}}}}={\text{pr}}_{1}\circ J_{1}(\pi
_{r},f) } $$
If one desired to restrict the hereby defined morphism
$\vphantom{_t}\botsmash{({\bold d}_{t}f)_{_{_{_{\dsize\char"7E}}}}}$ to
a cross-section $j_{r+1}\upsilon $ of the fibred manifold $Y_{r+1}$,
one would obtain $\vphantom{_t}\botsmash{({\bold d}_{t}f)_{_{_{_{\dsize\char"7E}}}}}\circ \,j_{r+1}\upsilon
=f_{1}\circ j_{r+1}\upsilon ={\bold d}(f\circ j_{r}\upsilon )\equiv
{\bold d}\bigl((j_{r}\upsilon )^{\star}f\bigr)$. But for the semi-basic
differential forms (\ref{9}) holds, so the following formula appears to
give a more ``working'' form of this definition,
however involving explicitly an arbitrary jet $j_{r+1}\upsilon $,
\begin{equation}
\boxed{(j_{r+1}\upsilon )^{\star}{\bold d}_{t}f={\bold d}((j_{r}\upsilon
)^{\star}f)
}\label{A4}
\end{equation}

We shall give an equivalent definition of this same operator of total
differential by employing the notion~\cite{Dhooghe1982} of the standard
horizontal lift ${{{\mathsf r}}}:{\pi_{r+1}}^{-1}(TZ)\rightarrow {^{r}\pi_{r+1}}
^{-1}\bigl(T(Y_{r})\bigr)$ (see the picture of Fig.\mms\ref{Figure10})\,:
\begin{figure}[h]
\def\s{\scriptscriptstyle}
\def\sz{\scriptstyle}
$$\xy
\xymatrix{
&&\save\POS-<1em,2.2ex>\drop+{\Bbb R}\POS="comment"\restore\\
\pi_{r\s+1}{\!\!}^{\s-1}\,TZ
\ar`u"comment""comment"|{\object+{\sz d_tf}}
\ar^-{{\mathsf r}}[rr]
\ar[dr]
&&^r{\!}\pi_{r\s+1}{\!\!}^{\s-1}\;T(Y_r)
\ar[rr]^-{\tau_r{\!}^{\s-1}\,(^r{\!}\pi_{r\s+1})}
\ar[dl]
&&T(Y_r)
\ar`u"comment""comment"|{\object+{\sz df}}
\ar[d]^{\tau_r} \\
&Y_{r+1}\ar[rrr]^{^r{\!}\pi_{r\s+1}}
&&&Y_r
}\endxy
$$
\caption{}
\label{Figure10}
\end{figure}

Assume vector
${\eurb u}$ be tangent to the curve $\sigma _{z}$ at the point $z\in
Z$.  The vector bundle homomorphism ${{{\mathsf r}}}$ takes the pair
$(y_{r+1},{\eurb u})$ over to the pair $\bigl(y_{r+1},(j_{r}\upsilon \circ
{\sigma _{z})}^\prime\bigr)$. Let $df$  denote the function on $TY_{r}$
which corresponds to the differential form  ${\bold d} f$. By
composing this $df$ with the standard projection
${\tau_r}^{-1}(^{r}\pi _{r+1}):{^{r}\pi_{r+1}}
^{-1}\bigl(T(Y_{r})\bigr)\rightarrow T(Y_{r})$  one can apply it to the lift
${{{\mathsf r}}}(y_{r+1},{\eurb u})$  and define
$$
(d_{t}f)(y_{r+1},{\eurb u})\;\doteqdot\;(df)\circ {\tau_r}^{-1}(^{r}\pi
_{r+1})\circ {{{\mathsf r}}}\ \;(y_{r+1},{\eurb u})\,.
$$
Or, introducing the notion of the dual
${{{\mathsf r}}}^{\sssize\#}=({{{\mathsf r}}}^{{\ast }})_{\sssize\#}$
of the mapping
${{{\mathsf r}}}_{\sssize\#}:{\frak H}_{r+1}(Z)\rightarrow {\frak H}_{r+1}(Y_{r})$,
this definition amounts to
$$
\boxed{{\bold d}_{t}f={{{\mathsf r}}}^{\sssize\#}\,\,{^{r}\pi_{r+1}}^{-1}\,\,{\bold df}
}$$

That the two definitions meet is obvious from the standard definition
of the exterior differential, namely
\begin{align*}
{\bold d}(f\circ j_{r}\upsilon )(z)\,{\boldkey.}\,{\eurb u}
&=(d/dt)(f\circ j_{r}\upsilon \circ\sigma _{z})(0) \\
&= {\bold d}f\bigl(j_{r}\upsilon (z)\bigr)\,{\boldkey.}\,
{\tau_r}^{-1}(^{r}\pi _{r+1})\,\bigl(\,{{{\mathsf r}}}\,(y_{r+1},{\eurb u})\,\bigr) \\
&= {\bold d}f(\pi _{r+1}y_{r+1})\,{\boldkey.}\,{\tau_r}^{-1}
(^{r}\pi _{r+1})\,\bigl(\,{{{\mathsf r}}}\,(y_{r+1},{\eurb u})\,\bigr) \\
&= (df)\circ \bigl({\tau_r}^{-1}(^{r}\pi_{r+1})\bigr)\ \,\bigl(\,{{{\mathsf r}}}\,(y_{r+1},{\eurb u})\,\bigr)\,,\quad \text{q.e.d.}
\end{align*}

By means of the lift ${{{\mathsf r}}}$ the module ${\frak X}_{\sssize Z}$ of vector fields
on $Z$ converts into an ${\frak F}_{r+1}$-module of de\-ri\-vations from
the commutative algebra ${\frak F}_{r}$ into the commutative algebra
${\frak F}_{r+1}$. To each field
${{\eufb z}}\in {\frak X}_{\sssize Z}$
there appears to be attached thus a derivation
${\bold D}_{t}({{\eufb z}})$
according to the rule
${\bold D}_{t}({{\eufb z}})f
\doteqdot({\bold d}_{t}f){\boldkey.}({\pi_{r+1}}^{-1}{{\eufb z}})$,
where the dot denotes
the contraction between $\Omega ^{1}_{r+1}(Z)$ and ${\frak H}_{r+1}(Z)$ calling to mind that
${\pi_{r+1}}^{-1}{\frak X}_{\sssize Z}\subset {\frak H}_{r+1}(Z)$.
This derivation ${\bold D}_{t}({{\eufb z}})$
is then extended to a derivation of degree~0
from the skew-symmetric graded algebra
$\Phi _{r}\doteqdot {\bold A}({\frak V}_{r})\approx \Gamma \{\wedge
V^{{\ast }}_{r}\}$
over ${\frak F}_{r}$ into the algebra $\Phi _{r+1}$ over
${\frak F}_{r+1}$
by the requirement that it commutes with ${\bold d}_{\pi }$~\cite{Kolar1977}. The so extended derivation law ${\bold D}_{t}({{\eufb z}})$
for each fixed ${{\eufb z}}$ is viewed as an ${\Bbb R}$-homomorphism from
the  ${\frak F}_{r}$-module $\Phi _{r}$ into the ${\frak
F}_{r+1}$-module $\Phi _{r+1}$. Since the map
${\bold D}_{t}:{{\eufb z}}\rightarrow {\bold D}_{t}({{\eufb z}})$
besides
that being itself a moduli homomorphism over the algebra homomorphism
${\pi_{r+1}}^{\star}:{\frak F}_{\sssize Z}\rightarrow {\frak F}_{r+1}$,
possesses also that crucial property of a derivation law,
$$
{\bold D}_{t}({{\eufb z}})(f\cdot \boldsymbol\varphi )
=({\bold D}_{t}({{\eufb z}})f)\cdot\, {^{r}\pi_{r+1}}^{\sssize\#}\boldsymbol\varphi\,
+\,({^{r}\pi_{r+1}}^{\star}f)\cdot {\bold D} _{t}({{\eufb z}})\boldsymbol\varphi\,,
$$
we can extend the total differential ${\bold d}_{t}$ to
an exterior differentiation operator from the  ${\frak F}_{r}$-module
${{\bold A}\msp1\negmedspace^{d}}{}({\frak X}_{\sssize Z};\Phi _{r})$  into the  ${\frak F}_{r+1}$-module
${{\bold A}\msp1\negmedspace^{d+1}}{}\msp1({\frak X}_{\sssize Z};\Phi _{r+1})$  in
a similar way as in Paragraph\ms\ref{App2Par1FibreDiff},
\begin{align*}
({\bold d}_{t}\boldsymbol\omega )({{\eufb z}}_{1},\ldots,{{\eufb z}}_{d+1})
&=\sum^{d+1}_{i=1}(-1)^{i+1}{\bold D}_{t}
({{\eufb z}}_{i})\boldsymbol\omega ({{\eufb z}}_{1},\ldots,{\widehat{{\eufb z}}}_{i},
\ldots,{{\eufb z}}_{d+1}) \\
&+\sum_{i<j}(-1)^{i+j}\boldsymbol\omega ([{{\eufb z}}_{i},{{\eufb z}}_{j}],
{{\eufb z}}_{1},\ldots,{\widehat{{\eufb z}}}_{i},
\ldots,{\widehat{{\eufb z}}}_{j},\ldots,{{\eufb z}}_{d+1}).
\end{align*}

The latter formula represents a local operator. We now transfer it to the module
${\bold A} ({\frak H}_{r}(Z);\Phi _{r})$ taking the advantage of the fact that
the module ${\frak X}_{\sssize Z}$ is locally free, i.e.
locally the base manifold $Z$ is parallelizable. For a {\it parallelizable\/} $Z$ the module
${\frak H}_{r}(Z)=\Gamma \{{\pi_r}^{-1}(TZ)\}$  is isomorphic to
the extension
${\frak F}_{r}\otimes _{{\frak F}_{_Z}}{\frak X}_{\sssize Z}$
of  ${\frak X}_{\sssize Z}$ obtained by the extension of the main
ring from  ${\frak F}_{\sssize Z}$ to  ${\frak F}_{r}$ relative to
${\pi_r}^{\star}$. Suppose the differential forms ${\bold d\xi }^{i}$ perform
some implementation of the parallelizability of $Z$, then
the isomorphism is given by ${{\eufb h}}\mapsto({\pi_r}^{-1}{\bold d\xi }^{i}){\boldkey.}
{{\eufb h}}\otimes (\partial /\partial \xi ^{i})$, and the inclusion
${\pi_r}^{-1}:{\frak X}_{\sssize Z}\rightarrow {\frak H}_{r}(Z)$
is represented under it by
${{\eufb z}}\mapsto 1\otimes {{\eufb z}}$.
Also the module
$\Omega^{d}_{r}(Z)=\Gamma \{{\pi_r}^{-1}{\wedge\!^{d}}T^\ast Z)\}$
is in this case isomorphic to the  extension
${\frak F}_{r}\otimes _{{\frak F}_{_Z}}{{\bold A}\negmedspace^{d}}{}\msp1({\frak X}_{\sssize Z})$.
There exists a homomorphism of modules from
${\frak F}_{r}\otimes _{\frak F_{_Z}}{{\bold A}\negmedspace^{d}}{}\msp1({\frak X}_{\sssize Z})$
to ${{\bold A}\negmedspace^{d}}{}\msp1({\frak F}_{r}
\otimes _{{\frak F}_{_Z}}{\frak X}_{\sssize Z})$
over ${\frak F}_{r}$ under which an element
$1\otimes\boldsymbol\mu\in{\frak F}_{r}\otimes _{\frak F_{_Z}}{{\bold A}\negmedspace^{d}}{}\msp1({\frak X}_{\sssize Z})$
goes over to the element
$\boldsymbol\beta\in {{\bold A}\negmedspace^{d}}{}\msp1({\frak F}_{r}
\otimes _{{\frak F}_{_Z}}{\frak X}_{\sssize Z})$,
defined by prescribing its values at the elements of
${\frak F}_{r}\otimes _{{\frak F}_{_Z}}{\frak X}_{\sssize Z}$
as
$\boldsymbol\beta (1\otimes {{\eufb z}}_{1},\ldots,1\otimes{{\eufb z}}_{d})
={\pi_r}^{\star}\bigl(\boldsymbol\mu({{\eufb z}}_{1},\ldots,{{\eufb z}}_{d})\bigr)$.
This homomorphism in case of finite-dimensional ${\frak X}_{\sssize Z}$
is in fact biunique and represents the isomorphism
$\Omega^{d}_{r}(Z)\approx {{\bold A}\negmedspace^{d}}{}\msp1(({\frak H}_{r}(Z))$ in terms of
the above extensions.
Multiplying it tensorwise by $\boldkey i\boldkey d:\Phi_{r}\rightarrow\Phi_{r}$,
and applying the identification
$\Phi _{r}\otimes _{{{\frak F} }_{_r}}{\frak F}_{r}
\otimes _{{\frak F}_{_Z}}{{\bold A}\negmedspace^{d}}{}\msp1({\frak X}_{\sssize Z})\approx
\Phi _{r}\otimes _{{\frak F}_{_Z}}{{\bold A}\negmedspace^{d}}{}\msp1({\frak X}_{\sssize Z})$,
it is possible to construct the
following commutative diagram of Fig.\mms\ref{Figure11}\,:
\enlargethispage{7truemm}
\begin{figure}[h]
\def\s{\scriptscriptstyle}
\def\sz{\scriptstyle}
$$\xy
\xymatrix{
\bold A(\frak F_{_{\sz r}}\otimes_{_{{\sz\frak F}_{_Z}}}\!\!\frak X_{_ Z};\,\Phi_{_{\sz r}})
\ar|-{\object+{\pmb{\boldkey(}\pi_r{\!}^{\s-1}\pmb{\boldkey)^{\boldsymbol\ast}}}}[rr]
&&\bold A(\frak X_{_Z};\Phi_{_{\sz r}})\\
\Phi_{_{\sz r}}\otimes_{_{{\sz\frak F}_{_r}}}\!\!\bold A(\frak F_{_{\sz r}}\otimes_{_{{\sz\frak F}_{_Z}}}\!\!\frak X_{_Z})
\ar[u]
&&\Phi_{_{\sz r}}\otimes_{_{{\sz\frak F}_{_Z}}}\!\!\bold A(\frak X_{_Z})
\ar[ll]
\ar[u]
}\endxy
$$
\caption{}
\label{Figure11}
\end{figure}

The vertical arrows are evident and in the case of finite-dimensional modules they
are in fact isomorphisms. For example, the one on the left maps an
element $\boldsymbol\varphi \otimes \boldsymbol\beta $ into the exterior form
$(1\otimes {{\eufb z}}_{1},\ldots,1\otimes {{\eufb z}}_{d})\mapsto
\boldsymbol\beta(1\otimes{{\eufb z}}_{1},\ldots,1\otimes{{\eufb z}}_{d})
{\boldkey.}\boldsymbol\varphi $. We conclude that
$\pmb{\boldkey(}{\pi_r}^{-1}\pmb{\boldkey)^{\boldsymbol\ast }}$ is an
isomorphism insofar and therefore the total differential ${\bold d}_{t}$
turns out to be defined as a local operator in the module
${\bold A} ({\frak H}_{r}(Z);\Phi _{r})$ too.
It possesses the property of a
derivation of degree~+1  of the graded module
$\Omega _{r}(Z;\wedge V^{{\ast }}_{r})$
over the graded algebra $\Omega _{r}(Z)$, i.e.
\begin{equation}
{\bold d}_{t}(\boldsymbol\beta \wedge \boldsymbol\omega )
={\bold d}_{t}\boldsymbol\beta
\wedge{^{r}\pi_{r+1}}^{\sssize\#}{^{r}\pi_{r+1}}^{\star}\boldsymbol\omega
+(-1)^{d}\,{^{r}\pi_{r+1}}^{\star}\boldsymbol\beta\wedge {\bold d}_{t}\boldsymbol\omega,
\label{A5}
\end{equation}
whenever $\boldsymbol\beta\in\Omega ^{d}_{r}(Z)$.

One more way to make operator $\bold d_{t}$ act over the whole of the graded
module $\bold A (\frak H_{r};\Phi_{r})$ is to first extend the derivation,
defined in $\frak F_{r}$ by
\begin{alignat*}{2}
\bold D_{t}(\eufb h)&\doteqdot (\bold d_{t}f){\,\boldkey.\,}\eufb h,&\qquad
\eufb h&\in \frak H_{r+1}(Z)\,,
\intertext{to the whole of $\Phi_{r}\approx \Omega^{0}_{r}(Z;\wedge V^{\ast}_{r})$
engaging the similar procedure as above, and then to extend the total differential
$\bold d_{t}:\Phi_{r}\to {\bold A\negmedspace^{1}}{}\msp1(\frak H_{r+1};\Phi_{r+1})$,
defined by}
(\bold d_{t}\boldsymbol\varphi){\,\boldkey.\,}\eufb h&\doteqdot\bold D_{t}(\eufb h)\msp1\boldsymbol\varphi,
&\qquad \eufb h&\in \frak H_{r+1}\,,
\end{alignat*}
to the module $\bold A(\frak H_{r};\Phi_{r})$ through the property
$\bold d_{t}{}^2=0$  together with the property~(\ref{A5}).\kern0pt\footnotemark[2]
\footnotetext[2]{\kern3pt The bigraded algebra
${\bold A} ({\frak H}_{r-1};
\Phi _{r-1})\approx \Omega _{r-1}(Z;\wedge V^{{\ast }}_{r-1})$
may be converted into exterior one by applying the dual of
the Cartan contact form
$T(Y_{r})\rightarrow {^{r-1}\pi_r}^{-1}(V_{r-1})$
with subsequent alternation. To the operators
 ${\bold d}_{\pi }$ and ${\bold d}_{t}$ defined in $\Omega
_{r-1}(Z;\wedge V^{{\ast }}_{r-1})$ correspond under this
 conversion the operators ${\bold d}_{\sssize V}$ and ${\bold d}_{\sssize H}$
of {\smc tulczyjew}~\cite{TulczyjewResolution} defined in $\Omega
(Y_{r})$.}

Let us compute the total differential of a contraction (see also~\cite{Horak}).
\begin{lemma}
Let  ${\eufb v}\in{\frak V}$,
$\boldsymbol\omega\in\Omega _{r}(Z;V^{\ast }_{r})$,
and let $J_{r}({\eufb v})$
denote the $r^{\text{th}}$-order prolongation of the vector field
${\eufb v}$ by means of prolonging its local one-parametric group.
The following formula holds
\begin{equation}
{\bold d}_{t}\left<J_{r}({\eufb v}),\boldsymbol\omega \right>
=\left<J_{r+1}({\eufb v}),{\bold d}_{t}\boldsymbol\omega \right>.  \label{A6}
\end{equation}
\end{lemma}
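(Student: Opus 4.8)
The plan is to establish (\ref{A6}) by reducing it, through the graded Leibniz rules already available, to the single generating instance $\boldsymbol\omega=\bold d_\pi f$ with $f\in\frak F_r$, and then to settle that instance by differentiating a naturality property of $\bold d_t$ along the local group of $\eufb v$.

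For the reduction I would work in a chart $(\xi^i;\psi^a_{\sssize\text{N}})$ as in Appendix~\ref{Appendix2}: locally every $\boldsymbol\omega\in\Omega_r(Z;V^{\ast}_r)$ is a finite sum of terms $g\,\bold d_\pi\psi^a_{\sssize\text{N}}\wedge\bold d\xi^{i_1}\wedge\dots\wedge\bold d\xi^{i_d}$ with $g\in\frak F_r$, so it suffices to propagate (\ref{A6}) from $\boldsymbol\omega=\bold d_\pi f$ through multiplication by functions and through exterior multiplication by the $\pi_s$-basic one-forms $\bold d\xi^i$. The contraction $\left<J_s(\eufb v),-\right>$ is $\frak F_s$-linear and slides past the $\bold d\xi^i$ (these carry trivial $V^{\ast}_s$-degree); the operator $\bold d_t$ obeys the Leibniz rule (\ref{A5}), kills each $\bold d\xi^i$ (indeed $\bold d\xi^i=\bold d_t\xi^i$ and $\bold d_t^2=0$), and commutes with $\bold d_\pi$ on functions by its construction, so $\bold d_t\bold d_\pi\psi^a_{\sssize\text{N}}=\bold d_\pi\bold d_t\psi^a_{\sssize\text{N}}$. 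To make the two sides match after these Leibniz steps one also needs the exchange law (\ref{1}) for $\left<-,-\right>$ under $({}^r\pi_{r+1})^{\sssize\#}$ versus $({}^r\pi_{r+1}^{\sssize V})_{\sssize\#}$, together with the fact that $J_{r+1}(\eufb v)$ is ${}^r\pi_{r+1}$-related to $J_r(\eufb v)$, i.e. $V({}^r\pi_{r+1})\circ J_{r+1}(\eufb v)=J_r(\eufb v)\circ{}^r\pi_{r+1}$ --- obtained by differentiating ${}^r\pi_{r+1}\circ J_{r+1}(e^{t\eufb v})=J_r(e^{t\eufb v})\circ{}^r\pi_{r+1}$ at $t=0$ --- whence, by the definition of $({}^r\pi_{r+1}^{\sssize V})_{\sssize\#}$ in Appendix~\ref{Appendix1}, $({}^r\pi_{r+1}^{\sssize V})_{\sssize\#}\,J_{r+1}(\eufb v)={}^r\pi_{r+1}^{-1}\bigl(J_r(\eufb v)\bigr)$ (cf.~(\ref{A3})). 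With these ingredients the various order-raising operations line up on both sides and only the case $\boldsymbol\omega=\bold d_\pi f$ remains.

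For that case the boxed formula of Paragraph~\ref{App2Par1FibreDiff} gives $\left<J_r(\eufb v),\bold d_\pi f\right>=\left<\iota_{\sssize\#}J_r(\eufb v),\bold d f\right>=\bigl(J_r(\eufb v)\bigr)f\in\frak F_r$, so the left-hand side of (\ref{A6}) is $\bold d_t\bigl((J_r(\eufb v))f\bigr)$, while the right-hand side, after $\bold d_t\bold d_\pi f=\bold d_\pi\bold d_t f$, is $\left<J_{r+1}(\eufb v),\bold d_\pi\bold d_t f\right>$. Writing $\bold d_t f=(\bold D_i f)\,\bold d\xi^i$ and using that $\bold d_\pi$ annihilates $\bold d\xi^i$ and that $J_{r+1}(\eufb v)$ is vertical over $Z$, this collapses to $\bigl((J_{r+1}(\eufb v))\bold D_i f\bigr)\bold d\xi^i$, whereas the left-hand side is $\bigl(\bold D_i((J_r(\eufb v))f)\bigr)\bold d\xi^i$. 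Hence for $\boldsymbol\omega=\bold d_\pi f$ formula (\ref{A6}) is exactly the commutation
$$
\bigl(J_{r+1}(\eufb v)\bigr)\,\bold D_i f=\bold D_i\bigl((J_r(\eufb v))f\bigr)\,,\qquad f\in\frak F_r\,,
$$
of a prolonged vertical field with a partial total derivative on functions. I would obtain this from naturality of $\bold d_t$: with $\phi_t=e^{t\eufb v}$, the defining relation (\ref{A4}) together with the prolongation law $j_s(\phi_t\circ\upsilon)=J_s(\phi_t)\circ j_s\upsilon$ (cf.~(\ref{11})) gives $\bold d_t\circ\bigl(J_r(\phi_t)\bigr)^{\star}=\bigl(J_{r+1}(\phi_t)\bigr)^{\star}\circ\bold d_t$ on $\frak F_r$; differentiating at $t=0$, and noting that, since $J_{r+1}(\eufb v)$ is vertical over $Z$ and $\bold d_t f$ is semi-basic over $Z$, the Cartan formula for the $t$-derivative of $\bigl(J_{r+1}(\phi_t)\bigr)^{\star}\bold d_t f$ keeps only the $\bold i(J_{r+1}(\eufb v))\,\bold d(\bold d_t f)$ term --- which equals $\left<J_{r+1}(\eufb v),\bold d_\pi\bold d_t f\right>$ --- one lands exactly on the displayed identity. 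Equivalently, one may invoke directly the recursion $\eta^a_{{\sssize\text{N}}+1_i}=\bold D_i\eta^a_{\sssize\text{N}}$ for the fibre coordinates of $J_r(\eufb v)=\eta^a_{\sssize\text{N}}\,\partial/\partial\psi^a_{\sssize\text{N}}$ together with the chain rule.

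The real work, I expect, sits in the reduction step rather than the base case: both sides of (\ref{A6}) live a priori in $\Omega^{d+1}_{r+1}(Z)$, and keeping the several incarnations of ``raising the order'' --- $\,{}^r\pi_{r+1}^{\star}$, $\,{}^r\pi_{r+1}^{\sssize\#}$, $({}^r\pi_{r+1}^{\sssize V})_{\sssize\#}$, and the isomorphism $V_{r+1}\approx J_{r+1}(V_{\sssize Z})$ underlying $\mathsf{is}$ --- consistently aligned through the Leibniz manipulations is where a sign or bookkeeping slip is most likely; the base case itself is a one-line flow argument. A direct computation in local coordinates, in the spirit of the proof of Proposition~\ref{Proposition 1}, is of course also available and may in the end be the shorter route.
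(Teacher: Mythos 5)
Your proposal is correct and follows essentially the same route as the paper: reduce, via the module structure of $\Omega_r(Z;\wedge V^{\ast}_r)$ over $\Omega_r(Z)$ and the compatibility of $J_{r+1}({\eufb v})$ with $J_r({\eufb v})$ under order-raising, to the generating case $\boldsymbol\omega={\bold d}_{\pi}f$, and then settle that case by differentiating along the flow $e^{t{\eufb v}}$ using $J_s(e^{t{\eufb v}})\circ j_s\upsilon=j_s(e^{t{\eufb v}}\circ\upsilon)$. The paper phrases the base case as an explicit equality of the two mixed second derivatives $(d/ds)(d/dt)\,f\circ J_r(e^{t{\eufb v}})\circ j_r\upsilon\circ\sigma^{{\eurb u}}_z(s)$ evaluated at $(y_{r+1},{\eurb u})$, rather than via your naturality relation for ${\bold d}_t$, but the two computations are identical in substance.
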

\begin{proof}[We give a brief proof]
Locally the algebra
$\Omega _{s}(Z;V^{{\ast }}_{s})$
is generated over $\Omega _{s}(Z)$ by
$\Omega ^{0}_{s}(Z;V^{{\ast }}_{s})$. Also
$\left<{\eufb v}_{s},\boldsymbol\beta \wedge \boldsymbol\varphi \right>
=\boldsymbol\beta\wedge\left<{\eufb v}_{s},\boldsymbol\varphi \right>$  for
$\boldsymbol\beta \in\Omega _{s}(Z)$, $\boldsymbol\varphi\in\Omega^{0}_{s}(Z;V^{{\ast}}_{s})$
and ${\eufb v}_{s}\in{\frak V}_{s}$,
so one may restrict oneself to the case
$\boldsymbol\omega ={\bold d}_{\pi }f$.
Let $\boldsymbol\beta ={\bold d}_{t}f$. Since ${\bold d}_{t}$
and ${\bold d}_{\pi }$ commute, ${\bold d}_{t}\boldsymbol\omega $
equals ${\bold d}_{\pi }\boldsymbol\beta $.
We compute:
\begin{align*}
\left<J_{r}({\eufb v}),{\bold d}_{\pi }f\right>(y_{r})
&=(Tf)\bigl(J_{r}({\eufb v})\bigr)(y_{r}) \\
&=(d/dt)f\bigl(J_{r}(e^{t{\eufb v}})(y_{r})\bigr)(0);
\end{align*}
\begin{align*}
\botsmash{\left<J_{r+1}({\eufb v}),{\bold d}_{\pi }
\boldsymbol\beta\right>_{_{_{_{\dsize\char"7E}}}}}(y_{r+1})
&=(T\vphantom{_x}\botsmash{\underset
{\dsize\char"7E}{\vphantom{_x}\botsmash{\beta}}})\bigl(J_{r+1}({\eufb v})(y_{r+1})\bigr) \\
&=(d/dt)\vphantom{_x}\botsmash{\underset
{\dsize\char"7E}{\vphantom{_x}\botsmash{\beta}}}\bigl(J_{r+1}(e^{t{\eufb v}})(y_{r+1})\bigr)(0).
\end{align*}
We recall that if some ${\eurb
u}\in T_{z}Z$ with $z=\pi_{r+1}(y_{r+1})$ is tangent to the curve
$\sigma^{{\eurb u}}_{z}(s)$ and if $y_{r+1}=j_{r+1}\upsilon (z)$,
then ${\vphantom{_x}\botsmash{({\bold d}_{t}f)_{_{_{_{\dsize\char"7E}}}}}}(y_{r+1})\,{\boldkey.}\,{\eurb u}
=(d/ds)(f\circ j_{r}\upsilon\circ\sigma^{{\eurb u}}_{z})(0)$.  Now
take $\left<J_{r}({\eufb v}),{\bold d}_{\pi }f\right>$  in place of $f$  to
obtain
\begin{align*}
({\bold d}_{t}\left<J_{r}({\eufb v}),{\bold d}_{\pi }f\right>)\,{\boldkey.}\,(y_{r+1},{\eurb u})
&=\botsmash{({\bold d}_{t}\left<J_{r}({\eufb v}),
{\bold d}_{\pi}f\right>)_{_{_{_{\dsize\char"7E}}}}}(y_{r+1})\;{\boldkey.}\;{\eurb u} \\
&=(d/ds)\,(\left<J_{r}({\eufb v}),{\bold d}_{\pi }f\right>\circ
j_{r}\upsilon \circ \sigma^{{\eurb u}}_{z})\,(0) \\
&=\left.(d/ds)\,(d/dt)\,f\circ
J_{r}(e^{t{\eufb v}}) \circ j_{r}\upsilon \circ \sigma^{{\eurb
u}}_{z}(s)\right|_{s=t=0}.
\end{align*}

On the other hand,
\begin{align*}
\left<J_{r+1}({\eufb v}),{\bold d}_{\pi }{\bold d}_{t}f\right>
{\boldkey.}\,y_{r+1},{\eurb u})
&=\botsmash{\left<J_{r+1}({\eufb v}),
{\bold d}_{\pi }\boldsymbol\beta
\right>_{_{_{_{\dsize\char"7E}}}}}(y_{r+1})\,{\boldkey.}\,{\eurb u} \\
&= (d/dt)\;\bigl(\;\botsmash{({\bold d}_{t}f)_{_{_{_{\dsize\char"7E}}}}}
\bigl(J_{r+1}(e^{t{\eufb v}})(y_{r+1})\bigr)\;{\boldkey.}\;{\eurb u}\;\bigr)\,(0).
\end{align*}
At this
stage it is necessary to put in the property of the prolongation
procedure, namely, $J_{r+1}(e^{t{\eufb v}})\circ j_{r+1}\upsilon
 =j_{r+1}(e^{t{\eufb v}}\circ \upsilon )$, in order to arrive at
\begin{align*}
\left<J_{r+1}({\eufb v}),{\bold d}_{\pi }{\bold d}_{t}f\right>{\boldkey.}\,
(y_{r+1},{\eurb u})
&= \left.(d/dt)\,(d/ds)\,f\circ j_{r}(e^{t{\eufb v}}
\circ \upsilon )\circ\sigma^{{\eurb u}}_{z}(s)\right|_{t=s=0} \\
&= \left.(d/dt)\,(d/ds)\,f\circ J_{r}(e^{t{\eufb v}})\circ
j_{r}\upsilon \circ \sigma^{{\eurb u}}_{z}(s)\right|_{t=s=0}.
\end{align*}
Thus both sides of (\ref{A6}) when evaluated at arbitrary
$(y_{r+1},{\eurb u})\in {\pi_{r+1}}^{-1}TZ$ provide one and the
same expression.
\end{proof}
\clearpage
\end{appendix}
{
\def\subtld#1{\vphantom{_{\scriptscriptstyle\#}}\smash{#1{\!}_{_{_{\textstyle\char"7E}}}}}
\begin{figure}
$$\xy
\xymatrix{
\vrule height14truemm width0pt
\\
&E
\ar[rrrrrr]^{{\mathsf g}}
\ar[ddl]+U+<-25truept,0truept>_{({\mathsf k}{\circ}{\mathsf g})_{_B}}
& & & & & &W
\ar[dddd]^{\chi}
\ar_{{\mathsf k}}[dddddddll]+UR-<5.8truept,0truept>
\ar[ddl]|{{\mathsf k}_{_X}}
\\
& &\save \POS+<-10truept,10truept>
   \drop+{{\fit g}^{\scriptscriptstyle-1}W}
   \ar[urrrrr]+<-14truept,-4.5truept>|(.4){\object+{\scriptstyle\chi^{-1}{\fit g}}}
   \ar[dll]+UR_<>(.6){{\fit g}^{-1}{\mathsf k}_{_X}}
   \POS="comment"
   \restore
   \ar[ul];"comment"^{{\mathsf g}_{_B}}
\\
{\fit g}'{}^{\scriptscriptstyle-1}W'\approx{\fit g}^{\scriptscriptstyle-1}{\fit k}^{\scriptscriptstyle-1}W'
\ar[rrrrrr]^<>(.5){({\fit k}^{\scriptscriptstyle-1}\chi')^{\scriptscriptstyle-1}{\fit g}}
\ar+D+<10truept,0truept>;[dddddrrrrr]+U-<4.4truept,0truept>|(.68){\object+{\scriptstyle\chi'{}^{-1}{\fit g}'}}
&&&&&&{\fit k}^{\scriptscriptstyle-1}W'
\ar[ddr]|<>(.257){\object+{}}|<>(.63){\object+{\scriptstyle{\fit k}^{-1}\chi'}}
\ar_(.25){\chi'{}^{-1}{\fit k}}[dddddl]+U+<1truept,0truept>
\\
\vrule height8pt width0pt
\\
&&&&\vrule height0pt width10truept&&&X
\ar[ddddll]^{{\fit k}}
\\
& B
\ar@{{}.>}^{{{\mathsf k}}_{_{\# }}{}{\eufb s}}[uuul]+D+<-25truept,0truept>
\ar@{{}.>}^{\eufb e}[uuuuu]
\ar@{{}.>}_{\eufb s=\overg}"comment"
\ar@{{}.>}^(.3){\frak g}[uuuuurrrrrr]
\ar^{{\fit g}}|<>(.181){\object+{}}|<>(.7079){\object+{}}|<>(.7756){\object+{}}[urrrrrr]
\ar@{{}.>}[ddrrrr]|{\object+{\scriptstyle{\mathsf k}\circ\frak g}}
\ar[dddrrrr]_{{\fit g}'}
\\
\vrule height8pt width0pt
\\
&&&&&W'
\ar[d]_<>(.3){\chi'}
\\
&\save\POS+<-30truept,+20truept>\drop+{\txt{
    $\eufb s={\mathsf g}_{\scriptscriptstyle\#}\eufb e$\\
    $\frak g={\mathsf g}\,{\scriptstyle\circ}\,\eufb e$\\
    $\subtld{({\mathsf k}_{\scriptscriptstyle\#}\eufb s)}
    ={\mathsf k}\,{\scriptstyle\circ}\,\frak g
    =\subtld{\big(({\mathsf k}\,{\scriptstyle\circ}\,{\mathsf g})_{\scriptscriptstyle\#}\eufb e\big)}$\\
    ${\mathsf k}_{\scriptscriptstyle\#}{\mathsf g}_{\scriptscriptstyle\#}\eufb e
    ={\mathsf k}_{\scriptscriptstyle\#}\eufb s=({\mathsf k}\,{\scriptstyle\circ}\,{\mathsf g})_{\scriptscriptstyle\#}\eufb e$
    }}\restore
&&&&X'
}\endxy$$
\caption{}
\label{Figure12}
\end{figure}
\clearpage
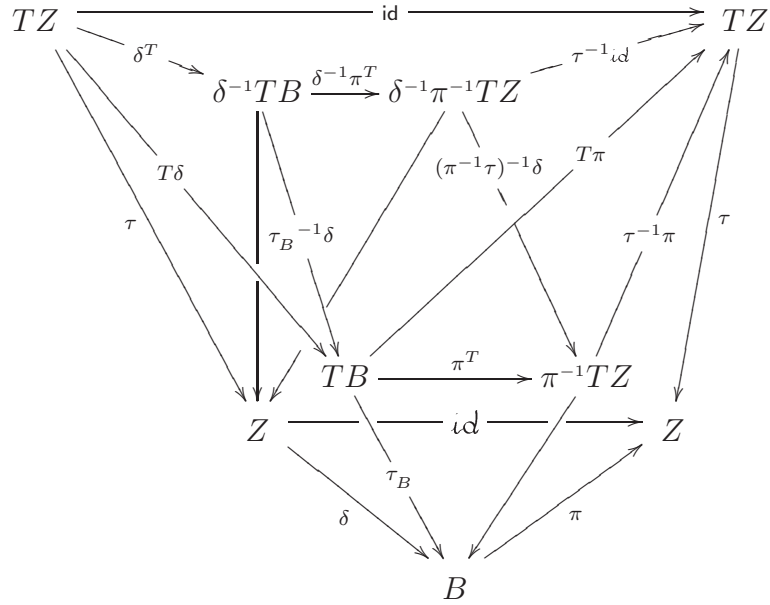
\begin{figure}
\xymatrixcolsep{0truepc}
\xymatrixrowsep{-.2truepc}
$$\xy
\xymatrix{
\vrule height20truemm width0pt
\\
\object+[o]{TZ}
\ar[rrrrrrr]|{\object+{\scriptstyle{\mathsf{id}}}}
\ar[ddrr]|{\object+{\scriptstyle\delta^T}}
\ar[ddddrrr]|<>(.4){\object+{\scriptstyle T\delta}}
\ar[dddddrr]_{\tau}
&
\vrule height0truept width40truept
&&&&&&\object+[o]{TZ}
\ar[dddddl]^{\tau}
\\
\vrule height10truept width0truept
\\
&&\delta^{\scriptscriptstyle-1}TB
\ar[rr]^<>(.5){\delta^{-1}\pi^T}
\ar[ddr]|{\object+{\scriptstyle\tau_{_B}{}^{-1}\delta}}
\ar[ddd]|<>(.557){\object+{}}
&&\delta^{\scriptscriptstyle-1}\pi^{\scriptscriptstyle-1}TZ
\ar[uurrr]|{\object+{\scriptstyle\tau^{-1}{\fit {id}}}}
\ar[ddr]|<>(.23){\object+{\scriptstyle(\pi^{-1}\tau){}^{-1}\delta}}|<>(.422){\object+{}}
\ar[dddll]|<>(.708){\object+{}}|<>(.794){\object+{}}
\\
\vrule height90truept width0truept
\\
&&&TB
\ar[uuuurrrr]|<>(.66){\object+{\scriptstyle T\pi}}
\ar[rr]^{\pi^T}
\ar[dddr]|{\object+{\scriptstyle\tau_{_B}}}
&&\pi^{\scriptscriptstyle-1}TZ
\ar[uuuurr]|<>(.4){\object+{\scriptstyle\tau^{-1}\pi}}
\ar[dddl]
\\
&&\object+[o]{Z}
\ar[rrrr]|<>(.226){\object+{}}|{\object+{{\fit {id}}}}|<>(.760){\object+{}}
\ar[ddrr]_{\delta}
&&&&\object+[o]{Z}
\\
\vrule height35truept width0truept
\\
&&&&\object+[o]{B}
\ar[uurr]_{\pi}
}\endxy$$
\caption{This is the complete picture underlying that of Fig.\mms\ref{Figure5}}
\label{Figure13}
\end{figure}
}
\clearpage
\def\t{{\overset\ast\tau}{}}
\def\b{\vphantom{_x}\botsmash{\underset{\displaystyle\char"7E}{\botsmash{\beta}}}}
\def\s{\scriptscriptstyle}
\def\lr{\overset\sim\leftrightarrow}
\xymatrixcolsep{15truept}
\begin{figure}
$$\xy
\xymatrix{
\save \POS+<+55truept,-20truept>
   \drop+{\txt{
        $\alpha\lr\beta$\\$\delta^{\s-1}\alpha\lr\delta^{\s-1}\beta$\\
        $\delta^\star=\delta^{\s\#}\delta^{\s-1}\alpha$\\
        $\wedge\delta^\ast{\scriptstyle\circ}\,(\delta^{\s-1}\wedge\pi^\ast)
        =\t^{\s-1}{\fit {id}}$
           }}
   \restore
&\vrule height0truept width90truept&
&\wedge T^\ast Z
\\
\vrule height40truept width0truept\\
&&Z
\ar@{{}.>}|{\object+{\scriptstyle\delta^\star\alpha}}[uur]+D+<-5truept,0truept>
\ar@{{}.{}}|{\delta^{-1}\beta}[r]
\ar@{{}.>}|{\delta^{-1}\alpha}[dr]+U+<-5truept,0truept>
\ar[ddddll]+U+<+9truept,0truept>^{\delta}
&{\;}
\ar@{{}.>}|<>(.248){\object+{}}[rr]
&&\delta^{\s-1}\pi^{\s-1}\wedge T^\ast Z
\ar[uull]+DR+<-7truept,0truept>_{\t^{-1}{\fit {id}}}
\ar[dll]+U+<+5truept,0truept>|{\object+{\scriptstyle\delta^{-1}\wedge\pi^\ast}}|<>(.701){\object+{}}
\ar[ddddl]^{(\pi^{-1}\t)^{-1}\delta}
\\
&&&\delta^{\s-1}\wedge T^\ast B
\ar[uuu]|{\object+{\scriptstyle\wedge\delta^\ast}}\\
\vrule height0truept width0truept\\
&&
\save \POS+<10truept,0truept>
   \drop+{\wedge T^\ast B}
   \POS="comment"
   \restore
   \ar[uur];"comment"|<>(.5){\object+{\scriptstyle\t_{_B}{\!}^{-1}\,\delta}}
\\
B
\ar+U+<+2truept,0truept>;[uuuuuurrr]+DL+<+7truept,0truept>^{\b}
\ar@{{}.>}|{\object+{\scriptstyle\alpha=\pi^{\s\#}\beta}}"comment"
\ar@{{}.>}|{\object+{\scriptstyle\beta}}[rrrr]
&&&&\pi^{\s-1}\wedge T^\ast Z
\ar"comment"|{\object+{\scriptstyle\wedge\pi^\ast}}
\ar[uuuuuul]+D+<+5truept,0truept>_(.7){\t^{-1}\pi}
}\endxy$$
\caption{}
\label{Figure14}
\vspace*{3cm}
\end{figure}
\def\sz{\scriptstyle}
\def\s{\scriptscriptstyle}
\def\lr{\overset\sim\leftrightarrow}
\xymatrixcolsep{-16.2969775truept}
\begin{figure}
$$
\xy
\xymatrix{
\delta^{\s-1}(VB)^\ast\otimes\wedge T^\ast Z\\
&\delta^{\s-1}(VB)^\ast\otimes\delta^{\s-1}\wedge T^\ast B
\ar[rr]
\ar|{\object+{\sz{\mathsf{id}}\,{\s\otimes}\,\wedge\delta^\ast}}[ul]
&&(VB)^\ast\otimes\wedge T^\ast B\\
&&\delta^{\s-1}(VB)^\ast\otimes\wedge T^\ast Z
\ar[rr]
\ar|{\object+{\sz{\mathsf{id}}\,{\s\otimes}\,\delta^{-1}\wedge\pi^\ast}}[ul]
&{\;}
\ar@{{}.{}}[u]
&(VB)^\ast\otimes\pi^{\s-1}\wedge T^\ast Z
\ar|{\object+{\sz{\mathsf{id}}\,{\s\otimes}\,\wedge\pi^\ast}}[ul]
\\
\save\POS+<0truept,13.5truept>\drop+{\txt{
    $\delta^\star\omega=\delta^{\s\#}\delta^{\s-1}\omega$\\
    $\omega\lr\bold d_\pi\beta$\\
    $\delta^{\s-1}\omega\lr\delta^{\s-1}\bold d\pi\beta$
}}\restore
&Z
\ar@{{}.>}^{\delta^\star\omega}[uuul]
\ar@{{}.>}|{\object+{\sz\delta^{-1}\omega}}[uu]
\ar@{{}.>}|{\object+{\sz\delta^{-1}\bold d_\pi\beta}}[ur]
\ar|{\object+{\sz\delta}}[rr]
&&B
\ar@{{}.{}}|{\object+{\sz\omega\,=\,\pi^{\#}\bold d_\pi\beta}}[u]
\ar@{{}.>}_{\bold d_\pi\beta}[ur]
}
\endxy$$
\caption{}
\label{Figure15}
\end{figure}
\clearpage
\fancyhead{} 
\fancyhead[CE,CO]{\slshape INTEGRATION BY PARTS IN
VARIATIONAL CALCULUS}
\nocite{Goldschmidt1973,Kazan,Pirani1979,Zharinov1992}
\begin{bibdiv}
\enlargethispage*{4truecm}
\begin{biblist}
\bib{Bourbaki1971}{book}{
author={N.\msi Bourbaki},
title={Vari\'{e}t\'{e} Diff\'{e}rentielles et Analitiques},
subtitle={Fascicule de r\'{e}sultats},
publisher={Hermann},
address={Paris},
date={1971},
}
\bib{Dhooghe1982}{article}{
author={P.\msi E.\msi J.\msi Dhooghe},
title={Contact transformations, contact algebras, and lifts on jet bundles},
journal={Ann. Math. Pura ed Appl.},
volume={131},
date={1982},
pages={291--300},
}
\bib{Ferraris1985}{article}{
author={M.\msi Ferraris},
author={M.\msi Francaviglia},
title={Energy-momentum tensors and
stress tensors in geometric field theories},
journal={J. Math. Phys.},
volume={26},
date={1985},
number={6},
pages={1243--1252},
}
\bib{Garcia_Symposia1974}{article}{
author={P.\msi L.\msi Garc\'{\i}a},
title={The Poincar\'e--Cartan invariant in the calculus of variations},
conference={
title={Convegno di Geometria Simplettica e Fisica Matematica},
date={1973},
address={Rome},
},
book={
series={Symposia Mathematica},
volume={\uppercase\expandafter{\romannumeral14}},
organization={Istituto Nazionale di Alta Matematica},
publisher={Academic Press},
address={London},
date={1974},
},
pages={219--246},
}
\bib{Goldschmidt1973}{article}{
author={H.\msi Goldschmidt},
author={Sh.\msi Sternberg},
title={The Hamilton-Cartan formalism in the calculus of variations},
journal={Ann. Inst. Fourier},
volume={23},
date={1973},
number={1},
pages={203--267},
}
\bib{Horak}{article}{
author={M.\msi Hor\'{a}k},
author={I.\msi Kol\'{a}\v{r}},
title={On the higher order Poincar\'{e}-Cartan
forms},
journal={Czechoslovak Math. J.},
volume={33},
date={1983},
number={3},
pages={467--475},
}
\bib{KolarNoveMesto}{article}{
author={I.\msi Kol\'{a}\v{r}},
title={Lie derivatives and higher-order Lagrangians},
conference={
title={Proceedings of the Conference (\v{C}SSR--GDR--Poland)
on Differential Geometry and its Applications},
address={Nov\'e M\v esto na Morav\v e},
date={September, 1980},
},
book={
publisher={Univ. Karlova},
address={Praha},
date={1981},
},
pages={117--123 },
}
\bib{Kolar1977}{article}{
author={I.\msi Kol\'{a}\v{r}},
title={On the Euler-Lagrange differential in fibred
manifolds},
journal={Repts Math. Phys.},
volume={12},
date={1977},
number={3},
pages={301--305},
}
\bib{Koszul}{book}{
author={J.\msi L.\msi Koszul},
title={Lectures on Fibre Bundles and Differential
Geometry},
publisher={Tata Institute of Fundamental Research},
address={Bombay},
date={1960},
}
\bib{Krupka1973}{book}{
author={D.\msi Krupka},
title={Some Geometric Aspects of Variational Problems in Fibred Manifolds},
series={Folia Facultatis Scientiarum
Naturalium Universitatis Purkynianae Brunensis. Physica},
volume={\uppercase\expandafter{\romannumeral14}},
date={1973},
publisher={Univerzita J.~E.~Purkyn\v{e}},
address={Brno},
note={65~pp.},
}
\bib{Theses}{thesis}{
type={Ph.D. Thesis},
author={R.\msi Ya.\msi Matsyuk},
title={Poincar\'{e}-invariant equations of motion in
Lagrangian mechanics with higher derivatives},
organization={Institute for Applied Problems in Mechanics and
Mathematics, Academy of Science. Ukraine},
address={L'viv},
date={1984},
language={Russian},
note={140~pp.},
}
\bib{Methods}{article}{
author={R.\msi Ya.\msi Matsyuk},
title={Exterior differential equations in
generalized mechanics and symmetry properties},
book={
title={Methods for Studying Differential and
Integral Operators},
publisher={Naukova Dumka},
address={Kyiv},
date={1989},
},
language={Russian},
pages={153--160},
review={\MR{MR1109955 (92b:58009)}},
}
\bib{Kazan}{article}{
author={R.\msi Ya.\msi Matsyuk},
title={Geometrical meaning of integration by parts in variational calculus},
conference={
title={An International Scientific Conference},
address={Kazan'},
date={August 18--22,~1992},
},
book={
title={Lobachevski\u{\i} i sovremennaya geometriya [Lobachevski\u{\i} and modern geometry]},
part={Part\ms\uppercase\expandafter{\romannumeral1}},
publisher={Kazan' State Univ.},
address={Kazan'},
date={1992},
note={Theses},
},
language={Russian},
pages={60--61},
review={~Entire collection: \MR{MR1311482 (95i:00015a)}},
}
\bib{SymVectorForms}{article}{
title={Symmetries of vector exterior differential systems and
the inverse problem in second-order  Ostrohrads'kyj mechanics},
author={R.\msi Ya.\msi Matsyuk},
journal={\href{http://staff.www.ltu.se/~norbert/home_journal/electronic/v4n1-2.html}%
{Journal of Nonlinear Mathematical Physics}},
volume={4},
date={1997},
number={1--2},
pages={89--97},
note={Proc. conf. `Symmetry in Nonlinear Mathematical Physics'
(Ky\"{\i}v, Ukra\"{\i}na, July~3--8, 1995), Vol.\ms 4
},
eprint={http://www.sm.luth.se/~norbert/home_journal/electronic/4-1_2art10.pdf},
review={\MR{1401574 (97h:58009)}},
}
\bib{Munoz-Masque1985}{article}{
author={Jaime Mu\~noz Masqu\'{e}},
title={Poincar\'{e}-Cartan forms in higher order variational calculus on fibred ma\-ni\-folds},
journal={Revista Matem\'{a}tica Iberoamericana},
volume={1},
date={1985},
number={4},
pages={85--126},
}
\bib{Trautman1975}{article}{
author={A.\msi Trautman},
title={Invariance of Lagrangian systems},
book={
title={General Relativity. Papers in honour of J.\msi L.\msi Synge},
editor={L.\msi O'Raifeartaigh},
publisher={Clarendon Press},
address={Oxford},
date={1975},
},
pages={85--99},
}
\bib{TulczyjewResolution}{article}{
author={W.\msi M.\msi Tulczyjew},
title={The Euler-Lagrange resolution},
conference={
title={International Colloquium of the C.N.R.S},
address={Aix-en-Provence},
date={September~3--7,~1979},
},
book={
title={Differential Geometrical Methods in Mathematical Physics},
part={Part~\uppercase\expandafter{\romannumeral1}},
series={Lecture Notes in Mathematics},
volume={836},
publisher={Springer-Verlag},
address={Berlin e.a},
date={1980},
},
pages={22--48},
}
\bib{Pirani1979}{book}{
author={F.\msi A.\msi E.\msi Pirani},
author={D.\msi C.\msi Robinson},
author={W.\msi F.\msi Shadwick},
title={Local Jet Bundle Formulation of B\"acklund Transformations},
publisher={D.\msi Reidel},
address={Dordrecht},
date={1979},
}
\bib{Pommaret1978}{book}{
author={J.\msi F.\msi Pommaret},
title={Systems of Partial Differential Equations and Lie Pseudogroups},
publisher={Gordon \& Breach},
address={New York},
date={1978},
}
\bib{Sternberg1964}{book}{
author={Sh.\msi Sternberg},
title={Lecture Notes on Differential Geometry},
publisher={Prentice Hall},
address={Englewood Cliffs, N.\msi J.},
date={1964},
}
\bib{Zharinov1992}{book}{
author={V.\msi Zharinov},
title={Lecture Notes on Geometrical Aspects of Partial Differential Equations},
publisher={World Scientific},
address={London-Singapore},
date={1992},
}
\end{biblist}
\end{bibdiv}
\enddocument